\definecolor{newcolor}{rgb}{.8,.349,.1}
\crefname{equation}{}{}
\crefname{figure}{Fig.}{Figs.}
\crefname{appendix}{}{}
\crefname{table}{Tab.}{Tabs.}
\Crefname{ALC@unique}{Line}{Lines} 
\def\ol{\overline}
\def\Psib{\boldsymbol{\Psi}}
\def\omegab{\boldsymbol{\omega}}
\newcommand{\gray}[1]{\textcolor{gray!75}{#1}}
\DeclareMathAlphabet\mathbfcal{OMS}{cmsy}{b}{n}
\newtheorem{lemma}{Lemma}[section]
\newtheorem{theorem}{Theorem}[section]
\newtheorem{definition}{Definition}[section]
\newdefinition{remark}{Remark}[section]
\newproof{proof}{Proof}
\DeclareMathOperator{\diag}{diag}
\newcommand{\avg}[1]{\langle #1 \rangle} 
\newcommand{\jmp}[1]{[\![#1]\!]} 
\begin{document}


\begin{frontmatter}

\title{Maximum principle preserving time implicit DGSEM for linear scalar hyperbolic conservation laws}

\author[rvt1]{Riccardo Milani}
\ead{riccardo.milani@onera.fr}
\address[rvt1]{DAAA, ONERA, Universit\'e Paris Saclay, F-92322 Ch\^atillon, France}

\author[rvt1]{Florent Renac\corref{cor2}}
\ead{florent.renac@onera.fr}
\cortext[cor2]{Corresponding author. Tel.: +33 1 46 73 37 44; fax.: +33 1 46 73 41 66.}

\author[rvt1]{Jean Ruel}
\ead{jean.ruel@ens-paris-saclay.fr}


\begin{abstract}
We investigate the properties of the high-order discontinuous Galerkin spectral element method (DGSEM) with implicit backward-Euler time stepping for the approximation of hyperbolic linear scalar conservation equation in multiple space dimensions. We first prove that the DGSEM scheme in one space dimension preserves a maximum principle for the cell-averaged solution when the time step is large enough. This property however no longer holds in multiple space dimensions and we propose to use the flux-corrected transport limiting \cite{BORIS_Book_FCT_73} based on a low-order approximation using graph viscosity to impose a maximum principle on the cell-averaged solution. These results allow to use a linear scaling limiter \cite{zhang_shu_10a} in order to impose a maximum principle at nodal values within elements. Then, we investigate the inversion of the linear systems resulting from the time implicit discretization at each time step. We prove that the diagonal blocks are invertible and provide efficient algorithms for their inversion. Numerical experiments in one and two space dimensions are presented to illustrate the conclusions of the present analyses.
\end{abstract}

\begin{keyword}
\MSC 65M12\sep 65M70\sep 76T10

hyperbolic scalar equations \sep maximum principle \sep discontinuous Galerkin method \sep summation-by-parts \sep backward Euler
\end{keyword}

\end{frontmatter}


%
%
\section{Introduction}

We are here interested in the accurate and robust approximation of the following problem with an hyperbolic scalar linear conservation law in $d\geq1$ space dimensions:

\begin{subequations}\label{eq:hyp_cons_laws}
\begin{align}
 \partial_tu + \nabla\cdot{\bf f}(u) &= 0, \quad \text{in }\Omega\times(0,\infty), \label{eq:hyp_cons_laws-a} \\
 u(\cdot,0) &= u_{0}(\cdot),\quad\text{in }\Omega, \label{eq:hyp_cons_laws-b} 
\end{align}
\end{subequations}

\noindent with $\Omega\subset\mathbb{R}^d$, appropriate boundary conditions on $\partial\Omega$ (e.g., inflow or outflow conditions, periodic conditions), a smooth flux ${\bf f}\in{\cal C}^1(\mathbb{R},\mathbb{R}^d)$ and $u_{0}$ in $L^\infty(\mathbb{R}^d,\mathbb{R})$. We here consider linear fluxes with constant coefficients ${\bf f}(u)={\bf c}u$ with a given ${\bf c}$ in $\mathbb{R}^d$. Without loss of generality, we assume ${\bf c}$ in $\mathbb{R}_+^d$, a negative component being handled by reverting the corresponding space direction.

Problem \cref{eq:hyp_cons_laws} has to be understood in the sense of distributions where we look for weak solutions that are piecewise ${\cal C}^1$ solutions. Introducing the square entropy $\eta(u)=\tfrac{u^2}{2}$ and associated entropy flux ${\bf q}(u)={\bf c}\tfrac{u^2}{2}$ pair, solutions to \cref{eq:hyp_cons_laws-a} also satisfy

\begin{equation}\label{eq:PDE_entropy_ineq}
 \partial_t\eta(u) + \nabla\cdot{\bf q}(u) \leq 0, \quad \mbox{in }\Omega\times(0,\infty),
\end{equation}

\noindent in the sense of distributions. For compactly supported solutions, this brings uniqueness and $L^2$ stability:

\begin{equation*}
 \|u\|_{L^2(\Omega,\mathbb{R})} \leq \|u_0\|_{L^2(\Omega,\mathbb{R})} \quad \forall t\geq0.
\end{equation*}

Solutions to \cref{eq:hyp_cons_laws} also satisfy a maximum principle:

\begin{equation}\label{eq:PDE_max_principle}
 m \leq u_0(x) \leq M \text{ in }\Omega \quad\Rightarrow\quad
 m \leq u(x,t) \leq M \text{ in }\Omega\times(0,\infty),
\end{equation}

\noindent almost everywhere, which brings $L^\infty$ stability.


We are here interested in the approximation of \cref{eq:hyp_cons_laws} with a high-order space discretization that satisfies the above properties at the discrete level. We here consider the discontinuous Galerkin spectral element method (DGSEM) based on collocation between interpolation and quadrature points \cite{kopriva_gassner10} and tensor products of one-dimensional (1D) function bases and quadrature rules. The collocation property of the DGSEM associated to tensor-product evaluations and sum factorization drastically reduces the number of operations in the operators implementing the discretization and makes the DGSEM computationally efficient. Moreover, using diagonal norm summation-by-parts (SBP) operators and the entropy conservative numerical fluxes from Tadmor \cite{tadmor87}, semi-discrete entropy conservative finite-difference and spectral collocation schemes have been derived in \cite{fisher_carpenter_13,carpenter_etal14} and applied to a large variety of nonlinear conservation laws \cite{winters_etal_16,bohm_etal_18,despres98,renac17a,renac17b,renac2020entropy,winters2016affordable,Peyvan_etal_reactive_DG_23,zhang_shu_10a}, nonconservative hyperbolic systems and balance laws \cite{liu_etal_ES_DGSEM_MHD_18,renac19,coquel_etal_DGSEM_BN_21,ARR_HLLCgam_22,winters_etal_17,Waruszewski_etal_noncons_DG_22}, among others.

Most of the time, these schemes are analyzed in semi-discrete form for which the time derivative is not discretized, or when coupled with explicit in time discretizations. Time explicit integration may however become prohibitive for long time simulations or when looking for stationary solutions due to the strong CFL restriction on the time step which gets smaller as the approximation order of the scheme increases to ensure either linear stability \cite{gassner_kopriva_disp_diss_11,atkins_shu_98,krivodonova_qui_CFL_DG_13}, or positivity of the approximate solution \cite{zhang_shu_10a,zhang2010positivity}. The DGSEM also presents attractive features for implicit time stepping. First, the collocation property reduces the connectivity between degrees of freedom (DOFs) which makes the DGSEM well suited due to a reduced number of entries in the Jacobian matrix of the space residuals. This property has been used in \cite{ruedaramirez_etal_static_DGSEM_21} to rewrite the time implicit discretization of the compressible Navier-Stokes equations as a Schur complement problem at the cell level that is then efficiently solved using static condensation. Then, tensor-product bases and quadratures have motivated the derivation of tensor-product based approximations of the diagonal blocks of the Jacobian matrix by Kronecker products \cite{VanLoan2016,vanLoanKron00} of 1D operators using singular value decomposition of a shuffled matrix \cite{pazner_persson_percon_DG_18}, or a least squares alternatively in each space direction \cite{diosady_murman_precond_19}.


We here consider and analyze a DGSEM discretization in space associated with a first-order backward-Euler time integration which allows to circumvent the CFL condition for linear stability and makes it well adapted for approximating stationary solutions or solutions containing low characteristic frequency scales. It is however of strong importance to also evaluate to what extent other properties of the exact solution are also satisfied at the discrete level. Positivity preservation for instance is an essential property that is required to prevent the computations from crashing due to floating exceptions during the simulation of many hyperbolic systems. Little is known about the properties of time implicit DGSEM schemes, apart from the entropy stability which holds providing the semi-discrete scheme is entropy stable due to the dissipative character of the backward-Euler time integration. An analysis of a time implicit discontinuous Galerkin (DG) method with Legendre basis functions for the discretization of a 1D linear scalar hyperbolic equation has been performed in \cite{QinShu_impt_positive_DG_18} and showed that a lower bound on the time step is required for the cell-averaged solution to satisfy a maximum principle at the discrete level. A linear scaling limiter of the DG solution around its cell-average \cite{zhang_shu_10a} is then used to obtain a maximum principle preserving scheme. Numerical experiments with linear and also nonlinear hyperbolic scalar equations and systems support the conclusion of this analysis. The theoretical proof of this lower bound uses the truncated expansion of the Dirac delta function in Legendre series that is then used as a test function in the DG scheme to prove that the Jacobian matrix of the cell-averaged discrete scheme is an M-matrix. It is however difficult to use this trick in the DGSEM scheme that uses lower-order quadrature rules and whose form is directly linked to the particular choice of Lagrange interpolation polynomials as test functions. Unfortunately, this discrete preservation of the maximum principle or positivity no longer holds in general in multiple space dimensions even on Cartesian grids and solutions with negative cell-average in some cell can be generated \cite{ling_etal_pos_impl_DGM_18}. In the case of linear hyperbolic equations and radiative transfer equations, Ling et al. \cite{ling_etal_pos_impl_DGM_18} showed that it is possible to impose positivity of the solution providing the approximation polynomial space is enriched with additional functions. The use of reduced order quadrature rules and suitable test functions were proposed in \cite{Xu_Shu_PP_DG_2022} to define a conservative scheme that preserves positivity in the case of stationary linear hyperbolic conservation laws, The work in \cite{Xu_Shu_cons_PP_2023} proposes limiters that allow to ensure positivity of stationary solutions of the radiative transfer equations, while keeping a particular local conservation property for stationary conservation laws. These modifications seem difficult to be directly applied to the DGSEM without loosing the collocation which is essential for the efficiency of the method. A limiter for time implicit DG schemes for nonlinear scalar equations has been proposed in \cite{vandervegt_lim_impl_DG_19} by reformulating the discrete problem as a constrained optimization problem and introducing Lagrange multipliers associated to the constraints. This however results in a nonlinear and nonsmooth algebraic system of equations that requires an adapted Newton method for its resolution.

In the present work, we propose an analysis of the DGSEM scheme with backward-Euler time stepping for linear hyperbolic equations on Cartesian grids. We first analyze the discrete preservation of the maximum principle property and show that it holds for the cell-averaged solution in one space dimension for sufficiently large time steps. This result is similar to the one obtained in \cite{QinShu_impt_positive_DG_18} for a modal DG scheme with Legendre polynomials, though the conditions on the time step are different. The proof relies on the nilpotent property of the discrete derivative matrix evaluating the derivatives of the Lagrange interpolation polynomials at quadrature points. This property allows to easily invert the mass and stiffness matrices and derive a scheme for the cell-averaged scheme, thus allowing to derive conditions for the associated Jacobian matrix to be an M-matrix. The DOFs are then limited with the linear scaling limiter from \cite{zhang_shu_10a} to impose a maximum principle to the whole solution. Unfortunately, this property no longer holds in multiple space dimensions similarly to the modal DG scheme \cite{ling_etal_pos_impl_DGM_18}. We thus follow \cite{Guermond_IDP_NS_2021,ern_guermond_IDP_DIRK_23} that propose to use the flux-corrected transport (FCT) limiter \cite{BORIS_Book_FCT_73,zalesak1979fully} combining a low-order and maximum principle preserving scheme with the high-order DGSEM. The low-order scheme is obtained by adding graph viscosity \cite{guermond_popov_GV_16,Guermond_etal_IDP_conv_lim_18,PAZNER_idg_DGSEM20211} to the DGSEM scheme. The FCT limiter is here designed to preserve a maximum principle for the cell-averaged solution, not for all the DOFs. This aspect is essential to reduce the effect of the limiter when the solution is smooth. In particular, the numerical experiments highlight a strong improvement of the accuracy of the limited scheme that would be otherwise affected when limiting the DOFs as already observed in the literature \cite{Guermond_etal_IDP_conv_lim_18,Guermond_etal_IDP_conv_lim_19}. Here again, the linear scaling limiter is applied after the FCT limiter to ensure the maximum principle on the whole solution.

We also analyze the inversion of the linear system resulting from the time implicit discretization to be solved at each time step. The linear system is large, non symmetric, sparse with a sparsity pattern containing dense diagonal and sparse off-diagonal blocks of size the number of DOFs per cell. Efficient inversion could be achieved through the use of block sparse direct or iterative linear solvers. Many algorithms require the inversion of the diagonal blocks as in block-preconditionned Krylov solvers \cite{pazner_persson_percon_DG_18,persson_peraire_GMRES_DG_08,crivelini_bassi_mat_free_DG_11}, block relaxation schemes \cite{saad_lin_solvers_book}, etc. We here prove that the diagonal blocks are invertible and propose efficient algorithms for their inversion\footnote{A repository of the algorithms for block inversion is available at \url{https://github.com/rueljean/fast_DGSEM_block_inversion}. Consult \cref{app:fast_inversion_diag_blocks} for a description of the repository.}. We again use the nilpotency of the discrete derivative matrix to inverse the diagonal blocks of the 1D scheme. We use the inversion of the 1D diagonal blocks as building blocks for the inversion of diagonal blocks in multiple space dimensions thanks to the tensor product structure of the discretization operators. 


The paper is organized as follows. \Cref{sec:DGSEM_discr} introduces some properties of the DGSEM function space associated to Gauss-Lobatto quadrature rules. The 1D DGSEM is introduced and analyzed in \cref{sec:DGSEM_1D_fully_discr}, while \cref{sec:DGSEM_2D_fully_discr} focuses on the DGSEM in two space dimensions (see \cref{app:3D_DGSEM_scheme} for a summary of the results in three space dimensions). The results are assessed by numerical experiments in one and two space dimensions in \cref{sec:num_xp} and concluding remarks about this work are given in \cref{sec:conclusions}.


%
%
\section{The DGSEM discretization in space}\label{sec:DGSEM_discr}

\subsection{The DGSEM function space}

The DGSEM discretization consists in defining a discrete weak formulation of problem \cref{eq:hyp_cons_laws}. The space domain $\Omega$ is first discretized with a Cartesian grid $\Omega_h\subset\mathbb{R}^d$ with elements $\kappa$ labeled as $\kappa_i = [x_{i-1/2}, x_{i+1/2}]$ of size $\Delta x_i=x_{i+1/2}-x_{i-1/2}>0$, $1\leq i\leq N_x$, for $d=1$ (see \cref{fig:stencil_1D_DGSEM}); $\kappa_{ij} = [x_{i-1/2}, x_{i+1/2}]\times[y_{j-1/2}, y_{j+1/2}]$ of size $\Delta x_i\Delta y_j=(x_{i+1/2}-x_{i-1/2})(y_{j+1/2}-y_{j-1/2})>0$, $1\leq i\leq N_x$, $1\leq j\leq N_y$, for $d=2$ (see \cref{fig:stencil_2D_DGSEM}), etc. We also set $h\coloneqq\min_{\kappa\in\Omega_h}|\kappa|^{\frac{1}{d}}$.

The approximate solution to \cref{eq:hyp_cons_laws} is sought under the form (with some abuse in the notation for the indices and exponents that will be clarified below)

\begin{equation}\label{eq:DGSEM_num_sol}
 u_h({\bf x},t)=\sum_{k=1}^{N_p}\phi_\kappa^{k}({\bf x})U_\kappa^k(t) \quad \forall {\bf x}\in\kappa,\, \kappa\in\Omega_h,\, \forall t\geq0,
\end{equation}

\noindent where $(U_\kappa^k)_{1\leq k\leq N_p}$ are the DOFs in the element $\kappa$. The subset $(\phi_\kappa^{k})_{1\leq k\leq N_p}$ constitutes a basis of ${\cal V}_h^p$ restricted onto the element $\kappa$ and $N_p=(p+1)^d$ is its dimension. We use tensor product in each space direction of Lagrange interpolation polynomials $(\ell_k)_{0\leq k\leq p}$ associated to the Gauss-Lobatto quadrature nodes over $I=[-1,1]$, $\xi_0=-1<\xi_1<\dots<\xi_p=1$:

\begin{equation}\label{eq:def_Lag_polynom}
 \ell_k(\xi)=\prod_{l=0,l\neq k}^p \frac{\xi-\xi_l}{\xi_k-\xi_l}, \quad 0\leq k \leq p,
\end{equation}

\noindent which satisfy

\begin{equation}\label{eq:cardinalty_Lag_polynom}
 \ell_k(\xi_l)=\delta_{kl}, \quad 0\leq k,l \leq p,
\end{equation}

\noindent with $\delta_{kl}$ the Kronecker delta.

The basis functions are thus defined for $d=1$ by $\phi_i^{k}(x)=\ell_k\big(\tfrac{2}{\Delta x_i}(x-x_{i-\frac{1}{2}})-1\big)$ and for $d=2$ by $\phi_{ij}^{kl}({\bf x})=\ell_k\big(\tfrac{2}{\Delta x_i}(x-x_{i-\frac{1}{2}})-1\big)\ell_l\big(\tfrac{2}{\Delta y_j}(y-y_{j-\frac{1}{2}})-1\big)$, and so on.

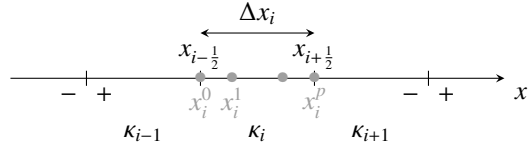
\begin{figure}
\begin{center}
\begin{tikzpicture}[scale=1]
\draw (3.25,0.6) node[above] {$\Delta x_i$};
\draw [>=stealth,<->] (2.5,0.6) -- (4.0,0.6) ;
\draw [>=stealth,->] (0,0) -- (6.5,0) ;
\draw (6.5,0) node[below right] {$x$};
\draw (1,-0.1) -- (1,0.1);
\draw (2.5,-0.1) -- (2.5,0.1);
\draw (4,-0.1) -- (4,0.1);
\draw (5.5,-0.1) -- (5.5,0.1);
\draw (2.5,0)   node {$\gray\bullet$};
\draw (2.915,0) node {$\gray\bullet$};
\draw (3.585,0) node {$\gray\bullet$};
\draw (4.0,0)   node {$\gray\bullet$};
\draw (2.5,0)   node[below] {$\gray{x_i^0}$};
\draw (2.5,0)   node[above] {$x_{i-\frac{1}{2}}$};
\draw (2.915,0) node[below] {$\gray{x_i^1}$};
\draw (4.0,0)   node[below] {$\gray{x_i^p}$};
\draw (4.0,0)   node[above] {$x_{i+\frac{1}{2}}$};
\draw (5.5,0)   node[below left] {$-$};
\draw (5.5,0)   node[below right] {$+$};
\draw (1,0)   node[below left] {$-$};
\draw (1,0)   node[below right] {$+$};

\draw (1.75,-0.5) node[below] {$\kappa_{i-1}$};
\draw (3.25,-0.5) node[below] {$\kappa_{i}$};,
\draw (4.75,-0.5) node[below] {$\kappa_{i+1}$};
\end{tikzpicture}
\caption{Mesh for $d=1$ with positions of quadrature points in $\kappa_i$, $x_i^k=\tfrac{1+\xi_k}{2}\Delta x_i+x_{i-\frac{1}{2}}$, for $p=3$.}
\label{fig:stencil_1D_DGSEM}
\end{center}
\end{figure}

The DGSEM uses Gauss-Lobatto quadrature rules to approximate the integrals over elements:

\begin{equation}\label{eq:GaussLobatto_quad}
 \int_{-1}^1 f(\xi)d\xi \simeq \sum_{k=0}^p \omega_k f(\xi_k),
\end{equation}

\noindent with $\omega_k>0$ and $\sum_{k=0}^p\omega_k=\int_{-1}^1ds=2$, the weights and $\xi_k$ the nodes over $I$ of the quadrature rule.

\begin{figure}
\begin{center}
\begin{tikzpicture}[scale=1]
\draw [>=stealth,->,thick] (-0.1,0) -- (6.5,0) ;
\draw (6.5,0) node[below] {$x$};
\draw [>=stealth,->,thick] (0,-0.1) -- (0,3.5) ;
\draw (0,3.5) node[left] {$y$};

\foreach \x in {0,...,3}
  \draw [thin] (1+1.5*\x,0)--(1+1.5*\x,3.25) ;

\foreach \x in {0,...,2}
  \draw [thin] (0,1+1.*\x)--(6.5,1+1.*\x) ;

\draw (3.25,2.25) node[above] {$\Delta x_i$};
\draw [>=stealth,<->] (2.5,2.25) -- (4.0,2.25) ;
\draw (2.5,0)   node[below] {$x_{i-\frac{1}{2}}$};
\draw (4.0,0)   node[below] {$x_{i+\frac{1}{2}}$};

\draw (4.5,1.5) node[right] {$\Delta y_j$};
\draw [>=stealth,<->] (4.5,1) -- (4.5,2) ;
\draw (0,1)   node[left] {$y_{j-\frac{1}{2}}$};
\draw (0,2)   node[left] {$y_{j+\frac{1}{2}}$};




\draw (3.25 -1. * 0.75, 1.5 -1. *0.5) node {$\gray\bullet$};
\draw (3.25 -1. * 0.75, 1.5 -0.447*0.5) node {$\gray\bullet$};
\draw (3.25 -1. * 0.75, 1.5 +0.447*0.5) node {$\gray\bullet$};
\draw (3.25 -1. * 0.75, 1.5 +1. *0.5) node {$\gray\bullet$};

\draw (3.25 -0.447* 0.75, 1.5 -1. *0.5) node {$\gray\bullet$};
\draw (3.25 -0.447* 0.75, 1.5 -0.447*0.5) node {$\gray\bullet$};
\draw (3.25 -0.447* 0.75, 1.5 +0.447*0.5) node {$\gray\bullet$};
\draw (3.25 -0.447* 0.75, 1.5 +1. *0.5) node {$\gray\bullet$};

\draw (3.25 +0.447* 0.75, 1.5 -1. *0.5) node {$\gray\bullet$};
\draw (3.25 +0.447* 0.75, 1.5 -0.447*0.5) node {$\gray\bullet$};
\draw (3.25 +0.447* 0.75, 1.5 +0.447*0.5) node {$\gray\bullet$};
\draw (3.25 +0.447* 0.75, 1.5 +1. *0.5) node {$\gray\bullet$};

\draw (3.25 +1.* 0.75, 1.5 -1. *0.5) node {$\gray\bullet$};
\draw (3.25 +1.* 0.75, 1.5 -0.447*0.5) node {$\gray\bullet$};
\draw (3.25 +1.* 0.75, 1.5 +0.447*0.5) node {$\gray\bullet$};
\draw (3.25 +1.* 0.75, 1.5 +1. *0.5) node {$\gray\bullet$};

\draw [densely dotted, color=gray] (3.25-0.447*0.75,1)--(3.25-0.447*0.75,2);
\draw [densely dotted, color=gray] (2.5,1.5+0.447*0.5)--(4,1.5+0.447*0.5);
\draw (3.25-0.447* 0.75,1) node[below] {$\gray k$};
\draw (2.5,1.5+0.447*0.5) node[left] {$\gray l$};


\draw (3.25,1.5) node[] {$\kappa_{ij}$};

\end{tikzpicture}
\caption{Mesh for $d=2$ with positions of quadrature points (gray bullets) in $\kappa_{ij}$ for $p=3$.}
\label{fig:stencil_2D_DGSEM}
\end{center}
\end{figure}
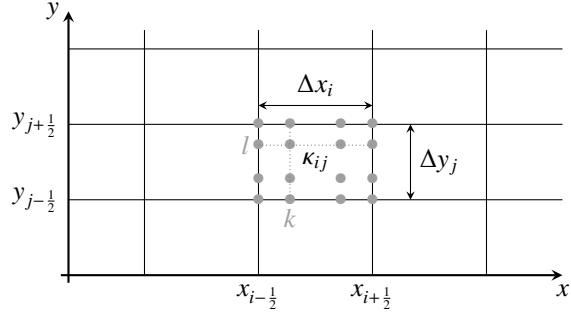

\subsection{Derivatives of the Lagrange polynomials}

It is convenient to introduce the discrete derivative matrix ${\bf D}$ \cite{kopriva_book} with entries

\begin{equation}\label{eq:nodalGL_diff_matrix}
 D_{kl} = \ell_l'(\xi_k), \quad 0\leq k,l \leq p.
\end{equation}

Note that we have $\ker{\bf D}=\mathbb{P}^0(I)$ and by the rank-nullity theorem ${\bf D}$ is of rank $p$. We will also consider ${\bf D}^{(\alpha)}$ the generalization to $\alpha$th-order derivatives:

\begin{equation}\label{eq:nodalGL_generalized_matD}
 D_{kl}^{(\alpha)} = \ell_l^{(\alpha)}(\xi_k), \quad 0\leq k,l \leq p, \quad \alpha\geq 0,
\end{equation}

\noindent with the conventions $D_{kl}^{(0)}=\ell_l(\xi_k)=\delta_{kl}$ and $D_{kl}^{(1)}=\ell_l'(\xi_k)=D_{kl}$. The matrix ${\bf D}$ maps any element of ${\cal V}_h^p$ to its derivative in ${\cal V}_h^{p-1}\subset{\cal V}_h^p$ and a direct calculation gives ${\bf D}^{(\alpha)} = {\bf D}^{\alpha}$,  and since the $(\ell_k)_{0\leq k\leq p}$ are polynomials of degree $p$, the matrix ${\bf D}$ is nilpotent:

\begin{equation}\label{eq:nilpotent_D_matrix}
 {\bf D}^{(p+1)}={\bf D}^{p+1}=0,
\end{equation}

\noindent so one can easily invert the following matrices

\begin{equation}\label{eq:inversion_ID_matrix}
 ({\bf I}-y{\bf D})^{-1} = \sum_{k=0}^p y^k{\bf D}^{(k)} \quad \forall y\in\mathbb{R},
\end{equation}

\noindent which corresponds to the truncated matrix series associated to the Taylor development of the function $x\mapsto(1-yx)^{-1}=\sum_{k\geq0}(yx)^k$ for $|xy|<1$.

Likewise ${\bf D}^{(p)}$ has columns with constant coefficients since $\ell_l^{(p)}$ is a constant function and its entries are easily obtained from \cref{eq:def_Lag_polynom}:

\begin{equation}\label{eq:entries_matDp}
 {\bf D}^{(p)}_{kl}=\ell_l^{(p)}(\xi_k)=p!\prod_{m=0,m\neq l}^p \frac{1}{\xi_l-\xi_m} \quad \forall 0\leq k,l\leq p.
\end{equation}

Integrating $\ell_k^{(\alpha)}$ over $I$ leads to the generalized integration relation

\begin{equation}\label{eq:telescoping_der}
 \sum_{l=0}^p\omega_lD_{lk}^{(\alpha)} = D_{pk}^{(\alpha-1)}-D_{0k}^{(\alpha-1)} \quad  \forall 0\leq k\leq p, \quad \alpha\geq1,
\end{equation}

\noindent which is the discrete counterpart to $\int_{-1}^1\ell^{(\alpha)}(\xi)d\xi=\ell^{(\alpha-1)}(1)-\ell^{(\alpha-1)}(-1)$ and for $\alpha=1$ we get

\begin{equation}\label{eq:INT}
    \sum \limits_{l=0}^{p} \omega_l D_{lk} = \delta_{kp}-\delta_{k0}, \quad 0 \leq k \leq p.
\end{equation}

Finally, as noticed in \cite{gassner_13}, the DGSEM satisfies the following important relation known as the summation-by-parts (SBP) property \cite{STRAND_SBP_94}

\begin{equation}\label{eq:SBP}
 \omega_kD_{kl}+\omega_lD_{lk}=\delta_{kp}\delta_{lp}-\delta_{k0}\delta_{l0}, \quad 0\leq k,l \leq p,
\end{equation}

\noindent which is the discrete counterpart to integration by parts.


%
%
\section{Time implicit discretization in one space dimension} \label{sec:DGSEM_1D_fully_discr}

We here consider \cref{eq:hyp_cons_laws} in one space dimension, $d=1$ and $f(u)=c_xu$ with $c_x>0$, over a unit domain $\Omega=(0,1)$ and consider periodic conditions $u(0,t)=u(1,t)$ which makes the analysis more difficult due to the existence of an upper block in the matrix. The present analysis however encompasses the case of Dirichlet boundary conditions leading to a block lower triangular system (see \cref{rk:Dirichlet_BC}).

\subsection{Space-time discretization}\label{sec:1d_BE_DGSEM_scheme}

The discretization in space of problem \cref{eq:hyp_cons_laws} is obtained by multiplying \cref{eq:hyp_cons_laws-a} by a test function $v_h$ in ${\cal V}_h^p$ where $u$ is replaced by the approximate solution \cref{eq:DGSEM_num_sol}, then integrating by parts in space over elements $\kappa_i$ and replacing the physical fluxes at interfaces by two-point numerical fluxes:

\begin{subequations}\label{eq:semi-discr_DGSEM}
\begin{equation}\label{eq:semi-discr_DGSEM-scheme}
 \frac{\omega_k \Delta x_i}{2}\partial_tU_i^{k} + R_i^k(u_h) = 0, \quad 1\leq i\leq N_x, \;  0\leq k\leq p, \; n\geq 0,
\end{equation}

\noindent with

\begin{equation}\label{eq:semi-discr_DGSEM-res}
 R_i^k(u_h) = -\sum_{l=0}^p\omega_lD_{lk}f(U_i^{l}) + \delta_{kp}h(U_i^{p},U_{i+1}^0) -\delta_{k0}h(U_{i-1}^{p},U_i^0),
\end{equation}
\end{subequations}

\noindent where we have used the conventions $U_{0}^{p}=U_{N_x}^{p}$ and $U_{N_x+1}^0=U_{1}^0$ to impose the periodic boundary condition. Since $c_x>0$, we use the upwind flux $h(u^-,u^+)=u^-$.

We now focus on a time implicit discretization with a backward Euler method in \cref{eq:semi-discr_DGSEM-scheme} and the fully discrete scheme reads

\begin{equation}\label{eq:discr_DGSEM_lin}
 \frac{\omega_k}{2}U_i^{k,n+1} + \lambda_i\Big( -\sum_{l=0}^p\omega_lD_{lk}U_i^{l,n+1}  + \delta_{kp}U_i^{p,n+1} - \delta_{k0}U_{i-1}^{p,n+1} \Big) = \frac{\omega_k}{2}U_i^{k,n},\quad 1\leq i\leq N_x, \;  0\leq k\leq p, \; n\geq 0,
\end{equation}

\noindent with $\lambda_i=c_x\tfrac{\Delta t^{(n)}}{\Delta x_i}$, $\Delta t^{(n)}=t^{(n+1)}-t^{(n)}>0$, with $t^{(0)}=0$, the time step, and using the notations $u_h^{(n)}(\cdot)=u_h(\cdot,t^{(n)})$ and $U_i^{k,n}=U_i^{k}(t^{(n)})$.

Summing \cref{eq:discr_DGSEM_lin} over $0\leq k\leq p$ gives

\begin{equation}\label{eq:DGSEM_cell_averaged}
 \langle u_h^{(n+1)}\rangle_i + \lambda_i\big(U_{i}^{p,n+1} - U_{i-1}^{p,n+1}\big)= \langle u_h^{(n)}\rangle_i \quad \forall 1\leq i\leq N_x, \; n\geq 0,
\end{equation}

\noindent for the cell-averaged solution

\begin{equation}\label{eq:cell-average}
 \langle u_h^{(n)}\rangle_i\coloneqq\sum_{k=0}^p\frac{\omega_k}{2}U_i^{k,n}.
\end{equation}

It is convenient to also consider \cref{eq:discr_DGSEM_lin} in vector form as

\begin{equation}\label{eq:discr_DGSEM_lin_vector_form}
 {\bf M}{\bf U}_i^{n+1} = {\bf M}{\bf U}_i^{n} + \lambda_i\Big( (2{\bf D}^\top{\bf M}-{\bf e}_p{\bf e}_p^\top){\bf U}_i^{n+1} + {\bf e}_0{\bf e}_p^\top{\bf U}_{i-1}^{n+1} \Big), \quad 1\leq i\leq N_x, \; n\geq 0.
\end{equation}

\noindent where ${\bf M}=\tfrac{1}{2}\diag(\omega_0,\dots,\omega_p)$ denotes the mass matrix, while $({\bf e}_k)_{0\leq k\leq p}$ is the canonical basis of $\mathbb{R}^{p+1}$ and ${\bf U}_i^n =(U_i^{k,n})_{0\leq k\leq p}$.

Finally, we derive the discrete counterpart to the inequality \cref{eq:PDE_entropy_ineq} for the square entropy. Left multiplying \cref{eq:discr_DGSEM_lin_vector_form} by $\big(\eta'(U_i^{0\leq k\leq p,n+1})\big)^\top={\bf U}_i^{(n+1)}$, solutions to \cref{eq:discr_DGSEM_lin} satisfy the following  inequality for the discrete square entropy $\tfrac{1}{2}\langle u_h^2\rangle_i$

\begin{equation*}
 \frac{1}{2}\langle (u_h^{(n+1)})^2\rangle_i - \frac{1}{2}\langle (u_h^{(n)})^2\rangle_i + \frac{\lambda_i}{2}\big((U_{i}^{p,n+1})^2-(U_{i-1}^{p,n+1})^2\big) \leq 0,
\end{equation*}

\noindent which brings existence and uniqueness of solutions to \cref{eq:discr_DGSEM_lin} in $L^2(\Omega_h\times\cup_{n\geq0}(t^{(n)},t^{(n+1)}),\mathbb{R})$.

\subsection{The M-matrix framework}

Before starting the analysis, we introduce the M-matrix framework that will be useful in the following. We first define the set $\mathcal{Z}^{n\times n}$ of all the $n\times n$ real matrices with nonpositive off-diagonal entries:

\begin{equation*}
    \mathcal{Z}^{n\times n} = \left\{{\bf A} =(a_{ij}) \in \mathbb{R}^{n\times n} : a_{ij} \leq 0, i \ne j \right\}.
\end{equation*}

Different characterizations of M-matrices exist \cite{PLEMMONS_Mmatrix_1977} and we use the following definition:

\begin{definition}
A matrix ${\bf A}\in \mathcal{Z}^{n\times n}$ is called an M-matrix if ${\bf A}$ is inverse-positive. That is ${\bf A}^{-1}$ exists and each entry of ${\bf A}^{-1}$ is nonnegative.
\end{definition}

We will use the following characterizations of an M-matrix \cite{PLEMMONS_Mmatrix_1977}:

\begin{theorem}\label{th:M-matrix2}
A matrix ${\bf A}\in \mathcal{Z}^{n\times n}$ is an M-matrix if and only if ${\bf A}$ is semi-positive. That is, there exists ${\bf x}=(x_1,\dots,x_n)^\top$ with $x_i>0$ such that $({\bf A}{\bf x})_i > 0$ for all $1\leq i\leq n$.
\end{theorem}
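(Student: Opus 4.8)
The plan is to prove the two implications separately, working from the definition that an M-matrix is precisely an inverse-positive matrix in $\mathcal{Z}^{n\times n}$.

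The easy direction is ($\Rightarrow$). Assuming $\mathbf{A}$ is an M-matrix, $\mathbf{A}^{-1}$ exists with every entry nonnegative. I would pick any strictly positive right-hand side, say $\mathbf{b}=(1,\dots,1)^\top>0$, and set $\mathbf{x}=\mathbf{A}^{-1}\mathbf{b}$, so that $\mathbf{A}\mathbf{x}=\mathbf{b}>0$ holds by construction. It then remains only to check $\mathbf{x}>0$ strictly: since $\mathbf{A}^{-1}$ is nonsingular none of its rows can vanish, and because $\mathbf{A}^{-1}\geq 0$ and $\mathbf{b}>0$, each component $x_i=\sum_j (\mathbf{A}^{-1})_{ij}\,b_j$ is a sum of nonnegative terms with at least one strictly positive, hence $x_i>0$. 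This exhibits the required semi-positive vector.

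The substantive direction is ($\Leftarrow$). The plan is to use the regular splitting of a $\mathcal{Z}$-matrix: write $\mathbf{A}=s\mathbf{I}-\mathbf{B}$ with $s>0$ chosen larger than $\max_i a_{ii}$ and $\mathbf{B}=s\mathbf{I}-\mathbf{A}$. By the sign pattern of $\mathcal{Z}^{n\times n}$ the off-diagonal entries of $\mathbf{B}$ are $-a_{ij}\geq 0$, and the choice of $s$ makes the diagonal entries $s-a_{ii}\geq 0$, so $\mathbf{B}\geq 0$. The whole argument then reduces to showing the spectral bound $\rho(\mathbf{B})<s$: once that holds, $\mathbf{A}=s(\mathbf{I}-\mathbf{B}/s)$ is invertible and the Neumann series $\mathbf{A}^{-1}=\tfrac{1}{s}\sum_{k\geq 0}(\mathbf{B}/s)^k$ converges, each summand being a nonnegative matrix, so that $\mathbf{A}^{-1}\geq 0$, which is exactly inverse-positivity.

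To obtain $\rho(\mathbf{B})<s$ I would invoke Perron--Frobenius for the nonnegative matrix $\mathbf{B}$: there is a left eigenvector $\mathbf{w}\geq 0$, $\mathbf{w}\neq 0$, with $\mathbf{w}^\top\mathbf{B}=\rho(\mathbf{B})\mathbf{w}^\top$. The semi-positivity hypothesis $\mathbf{A}\mathbf{x}>0$ with $\mathbf{x}>0$ rewrites componentwise as $\mathbf{B}\mathbf{x}<s\mathbf{x}$. Multiplying on the left by $\mathbf{w}^\top$ gives
\[
\rho(\mathbf{B})\,\mathbf{w}^\top\mathbf{x}=\mathbf{w}^\top\mathbf{B}\mathbf{x}<s\,\mathbf{w}^\top\mathbf{x},
\]
and since $\mathbf{w}^\top\mathbf{x}>0$ (because $\mathbf{w}\geq 0$, $\mathbf{w}\neq 0$ and $\mathbf{x}>0$) this yields $\rho(\mathbf{B})<s$. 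The main obstacle is precisely this strict spectral bound: the delicate point is justifying the \emph{strict} inequality after the left multiplication when $\mathbf{w}$ has some zero components, which I would handle by noting that for every index $i$ with $w_i>0$ the scalar inequality $w_i(\mathbf{B}\mathbf{x})_i<w_i\,s\,x_i$ is strict while the terms with $w_i=0$ contribute equally on both sides, so the summed inequality remains strict. Everything else (the splitting, the Neumann series, and the forward direction) is routine once this Perron--Frobenius estimate is in place.
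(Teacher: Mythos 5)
Your proof is correct. Note that the paper itself does not prove this statement: it is quoted as a known characterization of M-matrices with a citation to Plemmons' survey, so there is no "paper proof" to compare against. Your argument is a clean, self-contained derivation of that equivalence. The forward direction (take $\mathbf{x}=\mathbf{A}^{-1}\mathbf{1}$ and use that no row of a nonsingular nonnegative matrix vanishes) is exactly right, and the converse via the regular splitting $\mathbf{A}=s\mathbf{I}-\mathbf{B}$ with $\mathbf{B}\geq 0$, the Perron--Frobenius left eigenvector, and the Neumann series is the standard route. You correctly identified and handled the one delicate point, namely that the inequality $\rho(\mathbf{B})\,\mathbf{w}^\top\mathbf{x}<s\,\mathbf{w}^\top\mathbf{x}$ must remain strict even when $\mathbf{w}$ has zero components; your index-by-index argument, together with $\mathbf{w}^\top\mathbf{x}>0$ from $\mathbf{x}>0$ and $\mathbf{w}\neq 0$, closes that gap. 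No changes needed.
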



\begin{theorem}
\label{th:M-matrixCorr}
A matrix ${\bf A}\in \mathcal{Z}^{n\times n}$ is an M-matrix if ${\bf A}$ has all positive diagonal elements and it is strictly diagonally dominant, $a_{ii}>\sum_{j\neq i}|a_{ij}|$ for all $1\leq i\leq n$.
\end{theorem}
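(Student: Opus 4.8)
The plan is to reduce the statement to the semi-positivity characterization already provided in \cref{th:M-matrix2}. Since we are given ${\bf A}\in\mathcal{Z}^{n\times n}$, it suffices to exhibit a single vector ${\bf x}$ with strictly positive entries for which $({\bf A}{\bf x})_i>0$ holds for every $1\leq i\leq n$; \cref{th:M-matrix2} then delivers the M-matrix conclusion. The two hypotheses—nonpositive off-diagonal entries and strict diagonal dominance—are precisely what make the constant vector the natural candidate, so I would test ${\bf x}=(1,\dots,1)^\top$.

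With this choice, which is admissible because all its components are positive, each row sum is $({\bf A}{\bf x})_i=\sum_{j=1}^n a_{ij}=a_{ii}+\sum_{j\neq i}a_{ij}$. The membership ${\bf A}\in\mathcal{Z}^{n\times n}$ enters here in an essential way: since $a_{ij}\leq 0$ for $j\neq i$, one has $a_{ij}=-|a_{ij}|$, and the off-diagonal contribution rewrites as $-\sum_{j\neq i}|a_{ij}|$. Substituting gives $({\bf A}{\bf x})_i=a_{ii}-\sum_{j\neq i}|a_{ij}|$, which is strictly positive by the strict diagonal dominance assumption $a_{ii}>\sum_{j\neq i}|a_{ij}|$. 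Hence ${\bf A}$ is semi-positive and therefore an M-matrix.

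There is no real obstacle in this argument; the only point requiring care is that the sign hypothesis on the off-diagonal entries is exactly what converts the algebraic row sum into the absolute-value expression, so that diagonal dominance translates directly into positivity of $({\bf A}{\bf x})_i$. Without the $\mathcal{Z}^{n\times n}$ assumption the constant vector would not suffice, since cancellations among positive off-diagonal entries could spoil the bound.
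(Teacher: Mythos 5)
Your proof is correct. The paper itself states this result without proof, citing it from the M-matrix literature \cite{PLEMMONS_Mmatrix_1977}, so there is no in-paper argument to compare against; your reduction to the semi-positivity criterion of \cref{th:M-matrix2} via the all-ones vector ${\bf x}=(1,\dots,1)^\top$ is exactly the standard derivation, and every step checks out: the $\mathcal{Z}^{n\times n}$ hypothesis converts $\sum_{j\neq i}a_{ij}$ into $-\sum_{j\neq i}|a_{ij}|$, and strict diagonal dominance then makes each row sum strictly positive. One minor observation: the hypothesis that the diagonal entries are positive is not needed as a separate assumption in your argument, since $a_{ii}>\sum_{j\neq i}|a_{ij}|\geq 0$ already forces it; your proof silently (and harmlessly) never invokes it.
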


M-matrices will be used as a tool to prove positivity preservation for the DGSEM scheme which is equivalent to prove a discrete maximum principle (see \cref{th:PP_implies_MP}).

\subsection{Maximum principle for the cell average}\label{sec:MP_DGSEM_1D}

Following \cite{QinShu_impt_positive_DG_18}, we here prove in \cref{th:MP_preservation_CFL} a weaken discrete maximum principle for the cell average, $m \leq \langle u_h^{(n+1)}\rangle_i \leq M$. We then use the linear scaling limiter from \cite{zhang_shu_10a} to enforce all the DOFs at time $t^{(n+1)}$ to be in the range $[m,M]$ (see \cref{sec:pos_limiter}).

We here use the following result that shows that for the linear and conservative scheme \cref{eq:discr_DGSEM_lin}, maximum-principle preservation and positivity preservation are equivalent.

\begin{lemma}\label{th:PP_implies_MP}
 To prove a discrete maximum principle for the DGSEM scheme \cref{eq:discr_DGSEM_lin}, it is enough to prove that it is positivity preserving.
\end{lemma}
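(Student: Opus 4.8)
The plan is to exploit the linearity and conservation property of the scheme \cref{eq:discr_DGSEM_lin} together with a change of variables that turns a maximum principle into a positivity statement. The key observation is that the constant functions $u_h \equiv m$ and $u_h \equiv M$ are exact steady solutions of the discrete scheme. Indeed, setting all DOFs $U_i^{k,n}$ equal to a constant $a$, the spatial residual \cref{eq:semi-discr_DGSEM-res} vanishes: the term $\sum_l \omega_l D_{lk} a = a\sum_l \omega_l D_{lk} = a(\delta_{kp}-\delta_{k0})$ by \cref{eq:INT}, which exactly cancels the interface contributions $\delta_{kp}a - \delta_{k0}a$. Hence $U_i^{k,n+1}=a$ solves \cref{eq:discr_DGSEM_lin} whenever $U_i^{k,n}=a$, so constants are preserved in time.

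With this in hand, I would introduce the two shifted solutions $w_h \coloneqq u_h - m$ and $z_h \coloneqq M - u_h$, with corresponding DOFs $W_i^{k,n}=U_i^{k,n}-m$ and $Z_i^{k,n}=M-U_i^{k,n}$. Because the scheme is \emph{linear} and preserves constants, both $w_h$ and $z_h$ satisfy the very same discrete system \cref{eq:discr_DGSEM_lin} (the operator applied to a constant gives a consistent steady state, so subtracting leaves the equation form-invariant). Concretely, applying \cref{eq:discr_DGSEM_lin} to $U_i^{k,n+1}$ and to the constant $m$ and subtracting shows that $W_i^{k,n+1}$ obeys \cref{eq:discr_DGSEM_lin} with data $W_i^{k,n}$; the same argument with $M$ gives the statement for $Z_i^{k,n}$.

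The final step is to assemble the equivalence. Suppose the scheme is positivity preserving, i.e. nonnegative initial DOFs yield nonnegative DOFs at the next step. Given $m \leq U_i^{k,n} \leq M$ for all $i,k$, the shifted data satisfy $W_i^{k,n}\geq 0$ and $Z_i^{k,n}\geq 0$. Applying positivity preservation to the systems governing $w_h$ and $z_h$ yields $W_i^{k,n+1}\geq 0$ and $Z_i^{k,n+1}\geq 0$, that is $m \leq U_i^{k,n+1} \leq M$, which is exactly the discrete maximum principle. This chain also works at the level of the cell average, since the cell average \cref{eq:cell-average} is a nonnegatively-weighted (by $\tfrac{\omega_k}{2}>0$) combination of the DOFs, so the argument transfers verbatim to $\langle u_h^{(n)}\rangle_i$ in place of the pointwise DOFs. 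I therefore reduce proving $m\leq\langle u_h^{(n+1)}\rangle_i\leq M$ to proving that the cell-averaged scheme \cref{eq:DGSEM_cell_averaged} maps nonnegative averages to nonnegative averages.

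The only real subtlety, and the step I expect to demand the most care, is verifying that the shifted unknowns genuinely solve the \emph{same} discrete equation—in particular that the upwind boundary coupling and the periodic wrap-around conventions $U_0^p=U_{N_x}^p$, $U_{N_x+1}^0=U_1^0$ are compatible with the constant-preservation identity globally and not merely cell by cell. This hinges precisely on the telescoping/consistency relation \cref{eq:INT}, so I would state that cancellation explicitly before invoking linearity. Once constant preservation is established, the equivalence of positivity preservation and the maximum principle is immediate.
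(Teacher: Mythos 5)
Your proposal is correct and follows essentially the same route as the paper: both use \cref{eq:INT} to show that constants are discrete steady states, then exploit linearity to conclude that $(U_i^{k,n}-m)$ and $(M-U_i^{k,n})$ satisfy the same scheme \cref{eq:discr_DGSEM_lin}, so positivity preservation and the discrete maximum principle are equivalent. Your additional remarks on the periodic wrap-around conventions and the transfer to cell averages are sensible elaborations but do not change the argument.
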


\begin{proof}
From \cref{eq:INT} we obtain 

\begin{equation*}
 \frac{\omega_k}{2} = \frac{\omega_k}{2} + \lambda_i\Big( \sum_{l=0}^p\omega_lD_{lk}  - \delta_{kp} + \delta_{k0} \Big),
\end{equation*}

\noindent and subtracting the above equation multiplied by $m$ defined in \cref{eq:PDE_max_principle} from \cref{eq:discr_DGSEM_lin}, then subtracting \cref{eq:discr_DGSEM_lin} from the above equation multiplied by $M$ in \cref{eq:PDE_max_principle}, we deduce that both $(U_{i\in\mathbb{Z}}^{0\leq k\leq p,n\geq0}-m)$ and $(M-U_{i\in\mathbb{Z}}^{0\leq k\leq p,n\geq0})$ satisfy \cref{eq:discr_DGSEM_lin}. As a consequence, the positivity preserving property, $U_{i\in\mathbb{Z}}^{0\leq k\leq p,n}\geq0$ implies $U_{i\in\mathbb{Z}}^{0\leq k\leq p,n+1}\geq0$, is equivalent to the discrete maximum principle, $m\leq U_{i\in\mathbb{Z}}^{0\leq k\leq p,n}\leq M$ implies $m\leq U_{i\in\mathbb{Z}}^{0\leq k\leq p,n+1}\leq M$. \qed
\end{proof}

Using \cref{eq:inversion_ID_matrix} with $y=2\lambda_i$ to invert \cref{eq:discr_DGSEM_lin_vector_form}, we get

\begin{equation*}
 \frac{\omega_k}{2}U_i^{k,n+1} = \sum_{l=0}^p\frac{\omega_l}{2}{\cal D}^i_{kl}U_i^{l,n} - \lambda_i\big( {\cal D}^i_{kp}U_i^{p,n+1} - {\cal D}^i_{k0}U_{i-1}^{p,n+1} \big),
\end{equation*}

\noindent where the ${\cal D}^i_{kl}$ denote the entries of the matrix

\begin{equation}\label{eq:def_calD}
 {\mathbfcal D}_i \coloneqq \big({\bf I} - 2\lambda_i{\bf D}^\top\big)^{-1} \overset{\cref{eq:inversion_ID_matrix}}{=} \sum_{l=0}^p (2\lambda_i{\bf D}^\top)^l.
\end{equation}

We use \cref{eq:DGSEM_cell_averaged} to get $\lambda_i U_{i-1}^{p,n+1}=\lambda_i U_{i}^{p,n+1}+\langle u_h^{(n+1)}\rangle_i - \langle u_h^{(n)}\rangle_i$ and injecting this result into the above expression for $U_i^{p,n+1}$ gives

\begin{equation*}
 \sigma_i^pU_i^{p,n+1} = \xi_i^{p,n} + 2 {\cal D}^i_{p0} \Big( \langle u_h^{(n+1)}\rangle_i - \langle u_h^{(n)}\rangle_i \Big),
\end{equation*}

\noindent where

\begin{equation}\label{eq:coeffs_U_ip_n+1}
 \sigma_i^p = \omega_p + 2\lambda_i({\cal D}^i_{pp}-{\cal D}^i_{p0}), \quad \xi_i^{p,n}  = \sum_{l=0}^p\omega_l{\cal D}^i_{pl}U_i^{l,n}.
\end{equation}

Further using the above expression to eliminate $U_i^{p,n+1}$ and $U_{i-1}^{p,n+1}$ from the cell-averaged scheme \cref{eq:DGSEM_cell_averaged}, we finally obtain

\begin{equation} \label{eq:lin_cell_average_relation}
\begin{aligned}
 \left(1+2\lambda_i\frac{{\cal D}^i_{p0}}{\sigma_i^p}\right)\langle u_h^{(n+1)}\rangle_{i} - 2\lambda_i\frac{{\cal D}^{i-1}_{p0}}{\sigma_{i-1}^p}\langle u_h^{(n+1)}\rangle_{i-1} =& \left(1+2\lambda_i\frac{{\cal D}^i_{p0}}{\sigma_i^p}\right)\langle u_h^{(n)}\rangle_{i} - 2\lambda_i\frac{{\cal D}^{i-1}_{p0}}{\sigma_{i-1}^p}\langle u_h^{(n)}\rangle_{i-1} - \lambda_i\left(\frac{\xi_i^{p,n}}{\sigma_i^p} - \frac{\xi_{i-1}^{p,n}}{\sigma_{i-1}^p}\right) \\
 =& \langle u_h^{(n)}\rangle_{i}
 - \lambda_i\frac{\xi_i^{p,n}-2{\cal D}^i_{p0}\langle u_h^{(n)}\rangle_{i}}{\sigma_i^p} + \lambda_i\frac{\xi_{i-1}^{p,n}-2{\cal D}^{i-1}_{p0}\langle u_h^{(n)}\rangle_{i-1}}{\sigma_{i-1}^p} \\
 =& \sum_{k=0}^p\frac{\omega_k}{2}\left(\left(1-\frac{2\lambda_i\left({\cal D}^i_{pk}-{\cal D}^i_{p0}\right)}{\sigma_i^p}\right)U_{i}^{k,n} + \left(\frac{2\lambda_i\left({\cal D}^{i-1}_{pk}-{\cal D}^{i-1}_{p0}\right)}{\sigma_{i-1}^p}\right)U_{i-1}^{k,n}\right),
\end{aligned}
\end{equation}

\noindent where we have used \cref{eq:cell-average,eq:coeffs_U_ip_n+1} in the last step.

Let us now derive conditions on $\lambda_i$ for which the above relation preserves a discrete maximum principle for the cell-averaged solution. According to \cref{th:PP_implies_MP}, it is enough to prove that the scheme preserves positivity, i.e., $U_{1\leq i\leq N_x}^{0\leq k\leq p,n}\geq0$ imply $\langle u_h^{(n+1)}\rangle_{1\leq i\leq N_x}\geq0$. We will thus show that, under some conditions on $\lambda_i$, the matrix stemming from the linear system \cref{eq:lin_cell_average_relation} for the $\langle u_h^{(n+1)}\rangle_{1\leq j\leq N_x}$ is an M-matrix and that its RHS is a nonnegative combination of the $U_{i}^{k,n}$ and $U_{i-1}^{k,n}$. Assuming the DOFs at time $t^{(n)}$ are in the range $[m,M]$, so will do the cell-averaged solutions $\langle u_h^{(n+1)}\rangle_{1\leq i\leq N_x}$.

In view of \cref{eq:lin_cell_average_relation}, conditions for the RHS to be nonnegative read

\begin{equation}\label{eq:MPP_cond_i}
 \sigma_i^p - 2\lambda_i({\cal D}^i_{pk}-{\cal D}^i_{p0})=\omega_p+2\lambda_i({\cal D}^i_{pp}-{\cal D}^i_{pk}) \geq 0, \quad {\cal D}^i_{pk}-{\cal D}^i_{p0} \geq 0 \quad \forall 0\leq k\leq p, \; 1\leq i\leq N_x,
\end{equation}

\noindent with $\sigma_i^p>0$, while we impose the off-diagonal entries to be negative through

\begin{equation}\label{eq:MPP_cond_ii}
 \sigma_i^p = \omega_p + 2\lambda_i({\cal D}^i_{pp}-{\cal D}^i_{p0}) > 0, \quad {\cal D}^i_{p0} \geq 0.
\end{equation}

The strict inequality on the ${\cal D}^i_{p0}$ allows to satisfy \cref{th:M-matrix2} by choosing the vector $\mathbf{x}$ such that

\begin{equation*}
    x_i = \prod_{j=1, j\ne i}^{N_x} \frac{{\cal D}^j_{p0}}{\sigma_j^p} > 0 \quad \forall 1\leq i \leq N_x,
\end{equation*}

\noindent and we obtain $(\mathbf{Ax})_i = x_i > 0$ from \cref{eq:MPP_cond_ii}.


%
\begin{lemma}\label{th:lemme_lambda_min}
 For all $p\geq1$, there exists a finite $\lambda_{min}=\lambda_{min}(p)\geq0$ such that conditions \cref{eq:MPP_cond_i,eq:MPP_cond_ii} are satisfied for all $\lambda_i>\lambda_{min}$, $1\leq i\leq N_x$.
\end{lemma}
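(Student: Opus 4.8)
The plan is to treat every inequality in \cref{eq:MPP_cond_i,eq:MPP_cond_ii} as a polynomial inequality in the single variable $y\coloneqq2\lambda_i$ and to show that each one holds once $y$ is large enough. From the finite expansion \cref{eq:def_calD} together with $D^{(m)}_{lk}=\ell_k^{(m)}(\xi_l)$ (see \cref{eq:nodalGL_generalized_matD}), every entry reads ${\cal D}^i_{kl}=\sum_{m=0}^p y^m\ell_k^{(m)}(\xi_l)$, a polynomial of degree at most $p$ in $y$. In particular, the three quantities that enter the conditions, ${\cal D}^i_{p0}$, ${\cal D}^i_{pk}$ and ${\cal D}^i_{pp}$, involve only $\ell_p$ and its derivatives sampled at the nodes, namely ${\cal D}^i_{p0}=\sum_m y^m\ell_p^{(m)}(\xi_0)$, ${\cal D}^i_{pk}=\sum_m y^m\ell_p^{(m)}(\xi_k)$ and ${\cal D}^i_{pp}=\sum_m y^m\ell_p^{(m)}(\xi_p)$. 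Since these expressions, and hence the thresholds on $y$, do not depend on $i$, it suffices to establish the finitely many scalar inequalities indexed by $0\le k\le p$ for $y$ large; $\lambda_{min}(p)$ is then half the maximum of the resulting finite set of thresholds.

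The key observation I would exploit is the structure of the top two coefficients. Because $\ell_p$ has degree $p$, its $p$th derivative is the constant $c_p=p!\prod_{m\neq p}(\xi_p-\xi_m)^{-1}$ (cf. \cref{eq:entries_matDp}), which is strictly positive since $\xi_p=1$ is the largest node; crucially $\ell_p^{(p)}(\xi_l)=c_p$ does not depend on the node $\xi_l$. Moreover $\ell_p^{(p-1)}$ is affine with slope $c_p$, so $\ell_p^{(p-1)}(\xi_a)-\ell_p^{(p-1)}(\xi_b)=c_p(\xi_a-\xi_b)$ for any two nodes. Feeding this in: ${\cal D}^i_{p0}$ has leading term $c_p\,y^p$, hence ${\cal D}^i_{p0}\to+\infty$ and in particular ${\cal D}^i_{p0}>0$ for $y$ large, which secures the second part of \cref{eq:MPP_cond_ii} in the strict form used for \cref{th:M-matrix2}. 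In the difference ${\cal D}^i_{pk}-{\cal D}^i_{p0}=\sum_m y^m\big(\ell_p^{(m)}(\xi_k)-\ell_p^{(m)}(\xi_0)\big)$ the $y^p$ terms cancel, leaving a polynomial of degree $p-1$ whose leading coefficient is $c_p(\xi_k-\xi_0)=c_p(\xi_k+1)\ge0$, with strict sign for $k\ge1$ (the difference is identically $0$ for $k=0$); hence ${\cal D}^i_{pk}-{\cal D}^i_{p0}\ge0$ for $y$ large, the second requirement of \cref{eq:MPP_cond_i}.

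It remains to handle $\omega_p+y\big({\cal D}^i_{pp}-{\cal D}^i_{pk}\big)$. Once more the $y^p$ terms of ${\cal D}^i_{pp}-{\cal D}^i_{pk}$ cancel and the degree drops to $p-1$ with leading coefficient $c_p(\xi_p-\xi_k)=c_p(1-\xi_k)\ge0$; multiplying by $y$ gives a degree-$p$ polynomial with nonnegative leading coefficient, so $\omega_p+y({\cal D}^i_{pp}-{\cal D}^i_{pk})\to+\infty$ for $k<p$ and equals $\omega_p>0$ when $k=p$. This yields the first part of \cref{eq:MPP_cond_i} and, taking $k=0$, the strict positivity $\sigma_i^p>0$ of \cref{eq:MPP_cond_ii}. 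As each of these finitely many scalar expressions is eventually positive (or identically nonnegative) in $y$, there is a finite common threshold $\lambda_{min}(p)\ge0$ above which all the conditions hold. The main obstacle, and the only nonroutine point, is precisely the cancellation of the top-order $y^p$ contributions in the two differences: once they cancel, the sign for large $y$ is dictated by the subleading $y^{p-1}$ term, and it is the constancy of $\ell_p^{(p)}$ together with the affine form of $\ell_p^{(p-1)}$ (both consequences of the nilpotency of ${\bf D}$ exploited in \cref{eq:def_calD}) that makes these subleading coefficients explicit and of the correct sign.
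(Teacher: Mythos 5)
Your proof is correct and follows essentially the same route as the paper's: each condition is a polynomial inequality in $2\lambda_i$ whose degree-$p$ terms cancel because $\ell_p^{(p)}$ is constant, and whose subleading degree-$(p-1)$ coefficient is positive because $\ell_p^{(p-1)}$ is affine with positive slope $\ell_p^{(p)}(1)>0$. The only (cosmetic) differences are that you verify every condition explicitly, including the degenerate cases $k=0$ and $k=p$, where the paper treats one condition and appeals to "similar arguments," and that you obtain the coefficient $c_p(\xi_a-\xi_b)$ directly from the slope of $\ell_p^{(p-1)}$ rather than via the interpolation identity and the telescoping relation \cref{eq:telescoping_der}.
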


\begin{proof}
 Let consider the first condition in \cref{eq:MPP_cond_i}, similar arguments hold for all other conditions. For a fixed $0\leq k\leq p$, use \cref{eq:def_calD} to rewrite

\begin{equation*}
 {\cal D}^i_{pp}-{\cal D}^i_{pk} = \sum_{l=0}^p(2\lambda_i)^l\left(D_{pp}^{(l)}-D_{kp}^{(l)}\right) = \sum_{l=0}^{p-1}(2\lambda_i)^l\left(D_{pp}^{(l)}-D_{kp}^{(l)}\right),
\end{equation*}

\noindent since by \cref{eq:entries_matDp}, we have $D^{(p)}_{kp}=D^{(p)}_{pp}$. Hence for large $\lambda_i$, we have

\begin{equation*}
{\cal D}^i_{pp}-{\cal D}^i_{pk} \underset{\lambda_i}{\sim} (2\lambda_i)^{p-1}\left(D_{pp}^{(p-1)}-D_{kp}^{(p-1)}\right)
\end{equation*}

\noindent and we are going to show that this is a positive quantity. By using the linearity of $\ell_p^{(p-1)}(\cdot)$, we have $D_{kp}^{(p-1)}=\tfrac{1-\xi_k}{2}D_{0p}^{(p-1)}+\tfrac{1+\xi_k}{2}D_{pp}^{(p-1)}$. Then, we obtain 

\begin{align*}
 D_{pp}^{(p-1)}-D_{kp}^{(p-1)} = \frac{1-\xi_k}{2}\left(D_{pp}^{(p-1)}-D_{0p}^{(p-1)}\right) 
 \overset{\cref{eq:telescoping_der}}{=} \frac{1-\xi_k}{2}\sum_{l=0}^p\omega_lD_{lp}^{(p)} \overset{\cref{eq:entries_matDp}}{=} (1-\xi_k)D_{pp}^{(p)} \overset{\cref{eq:entries_matDp}}{=} \frac{(1-\xi_k)p!}{\prod\limits_{l=0}^{p-1}(1-\xi_l)}>0,
\end{align*}

\noindent which concludes the proof. \qed
\end{proof}

The following theorem immediately follows.

\begin{theorem}\label{th:MP_preservation_CFL}
 Under the conditions $\lambda_{1\leq i\leq N_x}>\lambda_{min}$ defined in \cref{th:lemme_lambda_min}, the DGSEM scheme \cref{eq:discr_DGSEM_lin} is maximum principle preserving for the cell-averaged solution:

 \begin{equation*}
  m\leq U_{i}^{k,n} \leq M \quad \forall 1\leq i\leq N_x, \; 0\leq k\leq p \quad\Rightarrow\quad m \leq \avg{u_h^{(n+1}}_i \leq M \quad \forall 1\leq i\leq N_x.
 \end{equation*}
\end{theorem}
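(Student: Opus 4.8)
The plan is to assemble the pieces already in place: \cref{th:lemme_lambda_min} supplies the structural sign conditions, these conditions turn the cell-averaged system \cref{eq:lin_cell_average_relation} into an M-matrix problem with a nonnegative right-hand side, and \cref{th:PP_implies_MP} converts the resulting positivity into the two-sided bound.

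First I would invoke \cref{th:lemme_lambda_min}: since each $\lambda_i>\lambda_{min}$, conditions \cref{eq:MPP_cond_i,eq:MPP_cond_ii} hold for every $1\leq i\leq N_x$, so in particular $\sigma_i^p>0$, ${\cal D}^i_{p0}\geq0$, and ${\cal D}^i_{pk}-{\cal D}^i_{p0}\geq0$ for all $k$. I would then read off the structure of the matrix $\mathbf{A}$ on the left-hand side of \cref{eq:lin_cell_average_relation} acting on the unknown vector $(\langle u_h^{(n+1)}\rangle_i)_{1\leq i\leq N_x}$: its diagonal entries $1+2\lambda_i{\cal D}^i_{p0}/\sigma_i^p$ are strictly positive and its only nonzero off-diagonal entries $-2\lambda_i{\cal D}^{i-1}_{p0}/\sigma_{i-1}^p$ are nonpositive by \cref{eq:MPP_cond_ii}, whence $\mathbf{A}\in\mathcal{Z}^{N_x\times N_x}$. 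The positive vector $\mathbf{x}$ exhibited just before \cref{th:lemme_lambda_min} satisfies $(\mathbf{A}\mathbf{x})_i=x_i>0$, so \cref{th:M-matrix2} identifies $\mathbf{A}$ as an M-matrix and $\mathbf{A}^{-1}$ has nonnegative entries.

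Next I would examine the right-hand side, written in the last line of \cref{eq:lin_cell_average_relation} as $\sum_{k=0}^p\tfrac{\omega_k}{2}(\cdots)$. The coefficient of $U_i^{k,n}$ equals $\big(\omega_p+2\lambda_i({\cal D}^i_{pp}-{\cal D}^i_{pk})\big)/\sigma_i^p$ and that of $U_{i-1}^{k,n}$ equals $2\lambda_i({\cal D}^{i-1}_{pk}-{\cal D}^{i-1}_{p0})/\sigma_{i-1}^p$; both are precisely the quantities shown nonnegative in \cref{eq:MPP_cond_i}, divided by the positive $\sigma^p$. Since $\omega_k>0$, nonnegativity of all DOFs at $t^{(n)}$ makes this right-hand side nonnegative componentwise, and multiplying by $\mathbf{A}^{-1}\geq0$ gives $\langle u_h^{(n+1)}\rangle_i\geq0$; thus the cell-averaged scheme preserves positivity.

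Finally I would upgrade positivity to the maximum principle exactly as in \cref{th:PP_implies_MP}: both $(U_i^{k,n}-m)$ and $(M-U_i^{k,n})$ solve \cref{eq:discr_DGSEM_lin}, hence satisfy \cref{eq:lin_cell_average_relation} with cell averages $\langle u_h^{(n)}\rangle_i-m$ and $M-\langle u_h^{(n)}\rangle_i$ (using $\sum_k\tfrac{\omega_k}{2}=1$ from \cref{eq:cell-average}). Applying the positivity step above to each shifted field yields $\langle u_h^{(n+1)}\rangle_i-m\geq0$ and $M-\langle u_h^{(n+1)}\rangle_i\geq0$, i.e. $m\leq\langle u_h^{(n+1)}\rangle_i\leq M$. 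The genuine analytical difficulty is not in this theorem but in \cref{th:lemme_lambda_min}, where the sign and large-$\lambda_i$ asymptotics of the entries ${\cal D}^i_{pk}$ are controlled; here the only care needed is to use the semi-positivity criterion \cref{th:M-matrix2} rather than strict diagonal dominance (\cref{th:M-matrixCorr}), since on a nonuniform mesh the off-diagonal entry $2\lambda_i{\cal D}^{i-1}_{p0}/\sigma_{i-1}^p$ need not be dominated by the diagonal, and to carry the shifts by $m$ and $M$ through the linear conservative structure so that the reduction applies at the level of cell averages.
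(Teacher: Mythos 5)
Your proposal is correct and follows essentially the same route as the paper: the paper's proof consists precisely of the discussion preceding the theorem (the M-matrix identification of \cref{eq:lin_cell_average_relation} via \cref{th:M-matrix2} with the product vector $\mathbf{x}$, the nonnegativity of the right-hand side under \cref{eq:MPP_cond_i,eq:MPP_cond_ii}, and the reduction from positivity to the two-sided bound via \cref{th:PP_implies_MP}), after which the theorem is stated as an immediate consequence. Your assembly of these pieces, including the remark that semi-positivity rather than strict diagonal dominance is the right criterion here, matches the paper's argument.
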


\Cref{tab:lower_bounds_CFL} indicates the lower bounds on the $\lambda_i$ as a function of the polynomial degree $p$ evaluated from the conditions \cref{eq:MPP_cond_i,eq:MPP_cond_ii}. We observe that the second-order in space scheme, $p=1$, is unconditionally maximum principle preserving, while the lower bound decreases with increasing $p$ values for $p\geq2$. These bounds are different from those obtained in \cite[Tab.~1]{QinShu_impt_positive_DG_18} for the modal DG scheme with Legendre polynomials as function basis. In particular, the modal DG scheme with $p=1$ is not unconditionally maximum principle preserving and is seen to require a larger CFL value for larger $p$ values.

\begin{table}
  \begin{center}
  \caption{Lower bounds on the non-dimensional time step $\lambda_i>\lambda_{min}$, $1\leq i\leq N_x$, for \cref{eq:MPP_cond_i,eq:MPP_cond_ii} to hold, which make \cref{eq:lin_cell_average_relation} maximum principle preserving.}
  \begin{tabular}{lllllll}
    \hline
		p & 1 & 2 & 3 & 4 & 5 & 6\\
    \hline
		$\lambda_{min}$ & $0$ & $\tfrac{1}{4}$ & $\tfrac{1+\sqrt{5}}{6(5-\sqrt{5})}$ & $0.150346$ & $0.147568$ & $0.109977$\\
    \hline
  \end{tabular}
  \label{tab:lower_bounds_CFL}
  \end{center}
\end{table}

\begin{remark}[Linear hyperbolic systems]
The above results also apply to the case of linear hyperbolic systems of size $n_{eq}$ with constant coefficients $\partial_t{\bf u}+{\bf A}\partial_x{\bf u}=0$ with ${\bf A}$ diagonalizable in $\mathbb{R}$ with eigenvalues $\psi_k$, normalized left and right eigenvectors ${\bf l}_k$ and ${\bf r}_k$ such that ${\bf l}_k^\top{\bf r}_l=\delta_{kl}$. Assuming that the right eigenvectors form a basis of $\mathbb{R}^{n_{eq}}$ and setting ${\bf u}=\sum_ku_k{\bf r}_k$, each component satisfies a maximum principle:  $\partial_tu_k+\psi_k\partial_xu_k=0$. Using a Roe flux, ${\bf h}({\bf u}^-,{\bf u}^+)=\tfrac{1}{2}{\bf A}({\bf u}^-+{\bf u}^+)+\tfrac{1}{2}\sum_k|\psi_k|(u_k^--u_k^+){\bf r}_k$, the time implicit DGSEM \cref{eq:discr_DGSEM_lin} decouples into $n_{eq}$ independent schemes \cref{eq:discr_DGSEM_lin} for $u_k$ upon left multiplication by ${\bf l}_k$ since ${\bf l}_k^\top{\bf h}({\bf u}^-,{\bf u}^+)=\tfrac{\psi_k}{2}(u_k^-+u_k^+)+\tfrac{|\psi_k|}{2}(u_k^--u_k^+)$ reduces to the upwind flux. \qed
\end{remark}

\begin{remark}[Geometric source]\label{rk:adv_reac_pbs}
\Cref{th:MP_preservation_CFL} with the bounds from \cref{th:lemme_lambda_min} also applies to linear equations with a geometric source term:

\begin{equation*}
 \partial_tu + c_x\partial_xu = s(x) \quad \text{in } \Omega\times(0,\infty), 
\end{equation*}

\noindent with $s(\cdot)\geq0$. Providing nonnegative initial and boundary data are imposed, the entropy solution remains nonnegative for all time. The DGSEM scheme \cref{eq:discr_DGSEM_lin} for the discretization of the above equation remains the same by substituting $U_i^{k,n}+s(x_i^k)\Delta t$ for $U_i^{k,n}$ in the RHS. The conditions to obtain an M-matrix in  \cref{eq:lin_cell_average_relation} are therefore unchanged and only the RHS is modified by the above change of variable on $U_i^{k,n}$. We thus conclude that the present DGSEM will be positivity-preserving for the cell-averaged solution under the same conditions as in \cref{th:MP_preservation_CFL}. \qed
\end{remark}

\subsection{Linear system solution}\label{sec:1D_linear_sys_solution}

Equations \cref{eq:discr_DGSEM_lin} or \cref{eq:discr_DGSEM_lin_vector_form} result in a banded block linear system

\begin{equation}\label{eq:global_lin_sys_1D}
 \mathbb{A}_{1d}{\bf U}^{(n+1)} = \mathbb{M}_{1d}{\bf U}^{(n)}
\end{equation}

\noindent of size $N_x(p+1)$ with blocks of size $p+1$ to be solved for ${\bf U}^{(n+1)}$ where ${\bf U}_{1+k+(p+1)j}=U_i^k$ and $\mathbb{M}_{1d}$ is the global mass matrix. Using the block structure of \cref{eq:global_lin_sys_1D} is usually important for its efficient resolution with either direct or iterative block-based solvers. We will also propose a direct algorithm below based on the inversion of the diagonal block only. In most cases, we need to invert the diagonal blocks and we now prove that they are unconditionally invertible and give an explicit expression for their inverse. Unless necessary, we omit the cell index $1\leq i\leq N_x$ in this section. 

\begin{lemma}
 For all $p\geq1$, the diagonal blocks ${\bf L}_{1d}{\bf M}$ of $\mathbb{A}_{1d}$ in the linear system \cref{eq:discr_DGSEM_lin_vector_form}, with

\begin{equation}\label{eq:1D_diag_blocks}
 {\bf L}_{1d} = {\bf I}-2\lambda\mathbfcal{L}, \quad \mathbfcal{L}={\bf D}^\top-\frac{1}{\omega_p}{\bf e}_p{\bf e}_p^\top, 
\end{equation}

\noindent are invertible for any $\lambda>0$.
\end{lemma}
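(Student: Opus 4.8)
The plan is to peel off the mass matrix, recognize $\mathbf{L}_{1d}$ as a rank‑one perturbation of a matrix we already know how to invert, and reduce everything to a determinant computation. Since $\mathbf{M}=\tfrac12\diag(\omega_0,\dots,\omega_p)$ has strictly positive diagonal entries, it is invertible, so $\mathbf{L}_{1d}\mathbf{M}$ is invertible if and only if $\mathbf{L}_{1d}$ is. I would therefore work with
\[
 \mathbf{L}_{1d} = \big(\mathbf{I}-2\lambda\mathbf{D}^\top\big) + \tfrac{2\lambda}{\omega_p}\mathbf{e}_p\mathbf{e}_p^\top =: \mathbf{A} + \tfrac{2\lambda}{\omega_p}\mathbf{e}_p\mathbf{e}_p^\top,
\]
reading it as a rank‑one update of $\mathbf{A}=\mathbf{I}-2\lambda\mathbf{D}^\top$. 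By the nilpotency \cref{eq:nilpotent_D_matrix}, $\mathbf{A}$ is invertible with inverse $\mathbfcal{D}=\sum_{l=0}^p(2\lambda)^l(\mathbf{D}^\top)^l$ as in \cref{eq:inversion_ID_matrix}, and since $\mathbf{D}^\top$ is nilpotent it has only zero eigenvalues, so $\mathbf{A}$ is unipotent and $\det\mathbf{A}=1$.

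Applying the matrix determinant lemma to the rank‑one update with $\mathbf{u}=\tfrac{2\lambda}{\omega_p}\mathbf{e}_p$ and $\mathbf{v}=\mathbf{e}_p$ then gives
\[
 \det\mathbf{L}_{1d} = \det\mathbf{A}\,\Big(1+\tfrac{2\lambda}{\omega_p}\mathbf{e}_p^\top\mathbfcal{D}\mathbf{e}_p\Big) = 1 + \frac{2\lambda}{\omega_p}{\cal D}_{pp},
\]
where, using $(\mathbf{D}^\top)^l=(\mathbf{D}^{(l)})^\top$ together with \cref{eq:nodalGL_generalized_matD}, the relevant entry is ${\cal D}_{pp}=\mathbf{e}_p^\top\mathbfcal{D}\mathbf{e}_p=\sum_{l=0}^p(2\lambda)^lD^{(l)}_{pp}=\sum_{l=0}^p(2\lambda)^l\ell_p^{(l)}(1)$. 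The invertibility for all $\lambda>0$ thus reduces to showing that this determinant never vanishes, i.e.\ that ${\cal D}_{pp}>0$.

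This sign statement is the only genuinely non‑mechanical step, and I would prove the stronger fact that $\ell_p^{(l)}(1)\ge 0$ for every $0\le l\le p$. Writing the cardinal polynomial at the rightmost node from \cref{eq:def_Lag_polynom} as $\ell_p(\xi)=\prod_{m=0}^{p-1}\frac{\xi-\xi_m}{1-\xi_m}$ and expanding each factor of the numerator about $\xi=1$ via $\xi-\xi_m=(\xi-1)+(1-\xi_m)$ with $1-\xi_m>0$, the product is a polynomial in $(\xi-1)$ with only nonnegative coefficients; differentiating $l$ times and evaluating at $\xi=1$ keeps a single surviving term, yielding $\ell_p^{(l)}(1)=l!\,\sigma_{p-l}/\prod_{m=0}^{p-1}(1-\xi_m)>0$, where $\sigma_{p-l}$ is an elementary symmetric function of the positive numbers $(1-\xi_m)$.

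Because the $l=0$ term already contributes $D^{(0)}_{pp}=1$ and every remaining term is nonnegative, ${\cal D}_{pp}\ge 1>0$, hence $\det\mathbf{L}_{1d}=1+\tfrac{2\lambda}{\omega_p}{\cal D}_{pp}>0$, so $\mathbf{L}_{1d}$, and therefore $\mathbf{L}_{1d}\mathbf{M}$, is invertible for every $\lambda>0$. I expect the main obstacle to be precisely the positivity of ${\cal D}_{pp}$; everything else is bookkeeping on the nilpotent structure. As a by‑product, the Sherman–Morrison formula applied to the same rank‑one decomposition produces the explicit inverse announced in the text, which I would record at the end of the proof.
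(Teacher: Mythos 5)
Your proof is correct and follows essentially the same route as the paper's: both exploit the rank-one decomposition ${\bf L}_{1d}=({\bf I}-2\lambda{\bf D}^\top)+\tfrac{2\lambda}{\omega_p}{\bf e}_p{\bf e}_p^\top$ with the nilpotency-based inverse $\mathbfcal{D}$, reduce invertibility to the single scalar condition $\omega_p+2\lambda{\cal D}_{pp}>0$, and establish it from the positivity of $\ell_p^{(l)}(1)$. The only difference is packaging — you compute $\det{\bf L}_{1d}$ via the matrix determinant lemma where the paper shows the kernel is trivial, and you express $\ell_p^{(l)}(1)$ through elementary symmetric functions of the $(1-\xi_m)$ rather than the paper's multi-sum of reciprocals — but these are equivalent formulations of the same argument.
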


\begin{proof}
Let us prove that ${\bf L}_{1d}$ is invertible: assume that ${\bf L}_{1d}{\bf u}=0$ for some ${\bf u}=(u_0,\dots,u_p)^\top$, then by \cref{eq:1D_diag_blocks} we have

\begin{equation*}
 ({\bf I}-2\lambda{\bf D}^\top){\bf u} = -\frac{2\lambda}{\omega_p}{\bf e}_p{\bf e}_p^\top{\bf u} = -\frac{2\lambda u_p}{\omega_p}{\bf e}_p \quad \Rightarrow \quad {\bf u}=-\frac{2\lambda u_p}{\omega_p}{\mathbfcal D}{\bf e}_p,
\end{equation*}

\noindent with ${\mathbfcal D}=({\bf I}-2\lambda{\bf D}^\top)^{-1}$ given by \cref{eq:def_calD}. Hence $u_k=-\tfrac{2\lambda u_p}{\omega_p}{\cal D}_{kp}$ and for $k=p$ we get $(\omega_p+2\lambda{\cal D}_{pp})u_p=0$, so $u_p=0$ since $\omega_p+2\lambda{\cal D}_{pp}>0$ from \cref{eq:MPP_cond_ii} and we conclude that ${\bf u}=0$. Note that \cref{eq:1D_diag_blocks} is invertible for all $\lambda>0$ since we have ${\cal D}_{pp}=1+\sum_{l=1}^p(2\lambda)^lD_{pp}^{(l)}>0$. Indeed, by differentiating \cref{eq:def_Lag_polynom} $l$-times we obtain

\begin{equation*}
 D_{pp}^{(l)} = \ell^{(l)}_p(1) = \sum_{k_1=0}^p\sum_{k_2=0,k_2\neq k_1}^p\cdots \sum_{k_l=1,k_l\notin\{k_1,\dots,k_{l-1}\}}^p \prod_{m=1}^l\frac{1}{1-\xi_{k_m}}>0 \quad \forall 1\leq l\leq p.
\end{equation*}

\qed
\end{proof}

Note that we easily deduce the explicit expression of the inverse of \cref{eq:1D_diag_blocks} from \cref{eq:def_calD} by using the Sherman-Morisson formula which provides the inverse of the sum of an invertible matrix ${\bf A}$ and a rank-one matrix ${\bf u}{\bf v}^\top$:

\begin{equation*}
 \big({\bf A}+{\bf u}{\bf v}^\top\big)^{-1}
 = \big({\bf I}+{\bf A}^{-1}{\bf u}{\bf v}^\top\big)^{-1}{\bf A}^{-1}
 = \big({\bf I}-{\bf A}^{-1}{\bf u}{\bf v}^\top+{\bf A}^{-1}{\bf u}{\bf v}^\top{\bf A}^{-1}{\bf u}{\bf v}^\top-\dots\big){\bf A}^{-1}
 = \left({\bf I}-\frac{1}{{1 + \bf v}^\top{\bf A}^{-1}{\bf u}}{\bf A}^{-1}{\bf u}{\bf v}^\top\right){\bf A}^{-1}.
\end{equation*}

Using ${\bf A}={\bf I}-2\lambda{\bf D}^\top$ and ${\bf u}={\bf v}={\bf e}_p$, we obtain

\begin{equation*}
 {\bf M}^{-1}{\bf L}_{1d}^{-1} = {\bf M}^{-1}\left({\bf I}-2\lambda\left({\bf D}^\top-\frac{1}{\omega_p}{\bf e}_p{\bf e}_p^\top\right)\right)^{-1} = {\bf M}^{-1}\left({\bf I}-\frac{2\lambda}{\omega_p+2\lambda{\cal D}^i_{pp}}{\mathbfcal D}{\bf e}_p{\bf e}_p^\top\right){\mathbfcal D}
\end{equation*}

\noindent with ${\mathbfcal D}=({\bf I}-2\lambda{\bf D}^\top)^{-1}$ given by \cref{eq:def_calD}. Again, this formula is well defined since $\omega_p+2\lambda{\cal D}^i_{pp}>0$ by \cref{eq:MPP_cond_ii}.

Let us finally propose a method to solve the global linear system \cref{eq:global_lin_sys_1D}. From \cref{eq:discr_DGSEM_lin}, we observe that $\mathbb{A}_{1d}=\mathbb{A}_0-\lambda_1{\bf e}_1^0({\bf e}_{N_x}^p)^\top$ in \cref{eq:global_lin_sys_1D} with $\mathbb{A}_0$ a block lower triangular matrix and a rank-one matrix defined from $({\bf e}_i^k)_{1\leq i\leq N_x}^{0\leq k\leq p}$ the canonical basis of $\mathbb{R}^{N_x(p+1)}$. Using again the Sherman-Morisson formula, we easily solve \cref{eq:global_lin_sys_1D} from \cref{algo:invert_A1d} where steps 1 and 2 can be solved efficiently using blockwise back substitution.

%
\begin{algorithm}
\caption{Algorithm flowchart for solving the global system \cref{eq:global_lin_sys_1D} by using the decomposition $\mathbb{A}_{1d}=\mathbb{A}_0-\lambda_1{\bf e}_1^0({\bf e}_{N_x}^p)^\top$ with $\mathbb{A}_0$ a block lower triangular matrix and ${\bf e}_1^0({\bf e}_{N_x}^p)^\top$ a rank-one matrix.}\label{algo:invert_A1d}
\begin{algorithmic}[1]
\STATE{solve $\mathbb{A}_0{\bf V}=\mathbb{M}_{1d}{\bf U}^{(n)}$ for ${\bf V}\in\mathbb{R}^{N_x(p+1)}$};
\STATE{solve $\mathbb{A}_0{\bf W}={\bf e}_1^0$ for ${\bf W}\in\mathbb{R}^{N_x(p+1)}$};
\STATE{set ${\bf U}^{(n+1)}={\bf V} + \tfrac{\lambda_1{\bf e}_{N_x}^p\cdot{\bf V}}{1-\lambda_1{\bf e}_{N_x}^p\cdot{\bf W}}{\bf W}$}.
\end{algorithmic}
\end{algorithm}

Applying \cref{algo:invert_A1d} again requires $1-\lambda_1{\bf e}_{N_x}^p\cdot{\bf W}=1-\lambda_1{\bf e}_{N_x}^p\cdot(\mathbb{A}_0^{-1}{\bf e}_1^0)\neq0$. This is indeed the case and to prove it we temporarily consider a uniform mesh for the sake of clarity, so $\lambda_i=\lambda$. We observe that the solution to $\mathbb{A}_0{\bf W}={\bf e}_1^0$ satisfies ${\bf L}_{1d}{\bf M}{\bf W}_1={\bf e}_0$ and ${\bf L}_{1d}{\bf M}{\bf W}_i=(\lambda{\bf e}_0{\bf e}_p^\top){\bf W}_{i-1}$ for $j\geq2$. We thus get ${\bf W}_i=(\lambda{\bf M}^{-1}{\bf L}_{1d}^{-1}{\bf e}_0{\bf e}_p^\top)^{j-1}{\bf M}^{-1}{\bf L}_{1d}^{-1}{\bf e}_0$ and ${\bf e}_{N_x}^p\cdot{\bf W}=\lambda^{N_x-1}\big(\tfrac{1}{\omega_p}({\bf L}_{1d}^{-1})_{p0}\big)^{N_x}$ with $({\bf L}_{1d}^{-1})_{p0}=\tfrac{2}{\omega_p}(1-\tfrac{2\lambda}{\omega_p+2\lambda{\cal D}^i_{pp}}{\cal D}^i_{pp}){\cal D}^i_{p0}=\tfrac{2\lambda{\cal D}^i_{p0}}{\omega_p+2\lambda{\cal D}^i_{pp}}>0$ from \cref{eq:MPP_cond_ii}. Note that ${\cal D}^i_{p0}>0$ holds for $\lambda>\lambda_{min}$ defined in \cref{th:lemme_lambda_min}.

\begin{remark}[Dirichlet boundary condition]\label{rk:Dirichlet_BC}
The case of an inflow boundary condition, $u(0,t)=g(t)\in[m,M]$, results in a similar linear system \cref{eq:global_lin_sys_1D} with the only difference that $U_{-1}^{p,n+1}=g(t^{(+1)})$ in \cref{eq:semi-discr_DGSEM-res}. As a consequence, \cref{eq:global_lin_sys_1D} is a block lower triangular matrix with the same diagonal blocks ${\bf L}_{1d}$ as in \cref{eq:discr_DGSEM_lin_vector_form}. The system \cref{eq:global_lin_sys_1D} is therefore easily solved by block forward substitution since the diagonal blocks are invertible. Likewise, the cell-averaged solution is maximum principle preserving under the same conditions in \cref{th:lemme_lambda_min} as with periodic boundary conditions. \qed
\end{remark}

%
%
\section{Time implicit discretization in two space dimensions}\label{sec:DGSEM_2D_fully_discr}

We now consider a 2D linear problem with constant coefficients:

\begin{subequations}\label{eq:hyp_2Dcons_laws}
\begin{align}
 \partial_tu + c_x\partial_xu + c_y\partial_y u &= 0, \quad \mbox{in }\Omega\times(0,\infty), \label{eq:hyp_2Dcons_laws-a} \\
 u(\cdot,0) &= u_{0}(\cdot),\quad\mbox{in }\Omega, \label{eq:hyp_2Dcons_laws-b}
\end{align}
\end{subequations}

\noindent with boundary conditions on $\partial\Omega$ and we again assume $c_x\geq0$ and $c_y\geq0$ without loss of generality. We again assume $\Omega=\mathbb{R}^2$ for the analysis, which amounts in practice to consider a rectangular domain with periodic boundary conditions. As in the 1D case, considering inflow and outflow boundary conditions results in a block lower triangular system to be solved and hence an easier analysis. The results in this section may be easily generalized to three space dimensions and \cref{app:3D_DGSEM_scheme} summarizes the analysis of the three-dimensional scheme.

\subsection{Space-time discretization}

We consider a Cartesian mesh with rectangular elements of measure $|\kappa_{ij}|=\Delta x_i\times \Delta y_j$ for all $i,j$ in $\mathbb{Z}$. Using again a time implicit discretization with a backward Euler method and upwind numerical fluxes, the fully discrete scheme reads

\begin{equation}\label{eq:fully-discr_DGSEM_2d}
\begin{aligned}
  \frac{\omega_k\omega_l}{4}(U_{ij}^{kl,n+1}-U_{ij}^{kl,n}) -& \frac{\omega_l}{2}\lambda_{x_i}\Big(\sum_{m=0}^p \omega_mD_{mk}U_{ij}^{ml,n+1} - \delta_{kp}U_{ij}^{pl,n+1} + \delta_{k0}U_{(i-1)j}^{pl,n+1}\Big)\\
  -& \frac{\omega_k}{2}\lambda_{y_j}\Big(\sum_{m=0}^p \omega_mD_{ml}U_{ij}^{km,n+1} - \delta_{lp}U_{ij}^{kp,n+1} + \delta_{l0}U_{i(j-1)}^{kp,n+1}\Big) = 0,
\end{aligned}
\end{equation}

\noindent where $\lambda_{x_i}=\tfrac{c_x\Delta t}{\Delta x_i}$ and  $\lambda_{y_j}=\tfrac{c_y\Delta t}{\Delta y_j}$. We again use the conventions $U_{0j}^{pl}=U_{N_xj}^{pl}$ and $U_{i0}^{kp}=U_{iN_y}^{kp}$ to take the periodic boundary conditions into account. Using a vector storage of the DOFs as $({\bf U}_{ij})_{n_{kl}}=U_{ij}^{kl}$ with $1\leq n_{kl}\coloneqq1+k+l(p+1)\leq N_p$ and $N_p=(p+1)^2$, it will be convenient to rewrite the scheme under vector form as

\begin{equation}\label{eq:2D_discr_DGSEM_lin_vector_form}
\begin{aligned}
 ({\bf M}\otimes{\bf M})({\bf U}_{ij}^{n+1} -{\bf U}_{ij}^{n}) &- \lambda_{x_i}\Big({\bf M}\otimes(2{\bf D}^\top{\bf M}-{\bf e}_{p}{\bf e}_{p}^\top)\Big){\bf U}_{ij}^{n+1} - \lambda_{x_i}\big({\bf M}\otimes{\bf e}_{0}{\bf e}_{p}^\top\big){\bf U}_{(i-1)j}^{n+1}\\
  &- \lambda_{y_j}\Big((2{\bf D}^\top{\bf M}-{\bf e}_{p}{\bf e}_{p}^\top)\otimes{\bf M}\Big){\bf U}_{ij}^{n+1} - \lambda_{y_j}\big({\bf e}_{0}{\bf e}_{p}^\top\otimes{\bf M}\big){\bf U}_{i(j-1)}^{n+1} = 0,
\end{aligned}
\end{equation}

\noindent where ${\bf M}=\tfrac{1}{2}\diag(\omega_0,\dots,\omega_p)$ denotes the 1D mass matrix, ${\bf M}\otimes{\bf M}$ the 2D mass matrix, $({\bf e}_k)_{0\leq k\leq p}$ is the canonical basis of $\mathbb{R}^{p+1}$, and $\otimes$ denotes the Kronecker product \cite{VanLoan2016,vanLoanKron00}: $({\bf A}\otimes{\bf B})_{n_{kl}n_{k'l'}}={\bf A}_{ll'}{\bf B}_{kk'}$, which satisfies 

\begin{subequations}
\begin{equation}\label{eq:prop_Kronecker_prod}
 ({\bf A}\otimes{\bf B})({\bf C}\otimes{\bf D})={\bf A}{\bf C}\otimes{\bf B}{\bf D}, \quad ({\bf A}\otimes{\bf B})^{-1}={\bf A}^{-1}\otimes{\bf B}^{-1}, \quad ({\bf A}\otimes{\bf B})^\top={\bf A}^\top\otimes{\bf B}^\top.
\end{equation}

Likewise, for diagonalizable matrices ${\bf A}={\bf R}_A\Psib_A{\bf R}_A^{-1}$ and ${\bf B}={\bf R}_B\Psib_B{\bf R}_B^{-1}$, the product ${\bf A}\otimes{\bf B}$ is also diagonalizable with eigenvalues being the product of eigenvalues of ${\bf A}$ and ${\bf B}$:

\begin{equation}\label{eq:prop_Kronecker_vap}
 {\bf A}\otimes{\bf B} = ({\bf R}_A\otimes{\bf R}_B)(\Psib_A\otimes\Psib_B)({\bf R}_A\otimes{\bf R}_B)^{-1}.
\end{equation}
\end{subequations}

Summing \cref{eq:fully-discr_DGSEM_2d} over $0 \leq k,l \leq p$ gives:

\begin{equation}\label{eq:DGSEM_cell_averaged_2d}
 \langle u_h^{(n+1)}\rangle_{ij} - \langle u_h^{(n)}\rangle_{ij}  + \frac{\lambda_{x_i}}{2}\sum_{l=0}^{p} \omega_l \Big(U_{ij}^{pl,n+1}-U_{(i-1)j}^{pl,n+1}\Big)  + \frac{\lambda_{y_j}}{2} \sum_{k=0}^{p} \omega_k \Big(U_{ij}^{kp,n+1}-U_{i(j-1)}^{kp,n+1}\Big)= 0,
\end{equation}

\noindent where the cell-average operator reads

\begin{equation}\label{eq:DGSEM_cell_averaged_sol_2d}
 \langle u_h\rangle_{ij} = \sum_{k=0}^p\sum_{l=0}^p\frac{\omega_k\omega_l}{4}U_{ij}^{kl}.
\end{equation}

Finally, left-multiplying \cref{eq:2D_discr_DGSEM_lin_vector_form} by ${\bf U}_{ij}^{(n+1)}$ brings $L^2$ stability:

\begin{align*}
 \frac{1}{2}\langle(u_h^{(n+1)})^2\rangle_{ij} - \frac{1}{2}\langle(u_h^{(n)})^2\rangle_{ij} + \frac{\lambda_{x_i}}{2}\sum_{l=0}^p\frac{\omega_l}{2}\big((U_{ij}^{pl,n+1})^2-(U_{(i-1)j}^{pl,n+1})^2\big) + \frac{\lambda_{y_j}}{2}\sum_{k=0}^p\frac{\omega_k}{2}\big((U_{ij}^{kp,n+1})^2-(U_{i(j-1)}^{kp,n+1})^2\big) \leq 0.
\end{align*}

The discrete derivative matrix is still nilpotent as shown in \cref{app:2D_difference_matrix}. Unfortunately, the scheme \cref{eq:2D_discr_DGSEM_lin_vector_form} is in general not maximum principle preserving for the cell average as may be observed in the numerical experiments of \cref{sec:num_xp}. We now propose to modify the scheme by adding graph viscosity to make it maximum principle preserving.

\subsection{Maximum principle through graph viscosity}


We add a graph viscosity \cite{guermond_popov_GV_16} term ${\bf V}_{ij}^{(n+1)}$ to the LHS of \cref{eq:2D_discr_DGSEM_lin_vector_form} which becomes

\begin{equation}\label{eq:2D_discr_DGSEM_with_GV_lin_vector_form}
\begin{aligned}
 ({\bf M}\otimes{\bf M})({\bf U}_{ij}^{n+1} -{\bf U}_{ij}^{n}) &- \lambda_{x_i}\big({\bf M}\otimes(2{\bf D}^\top{\bf M}-{\bf e}_{p}{\bf e}_{p}^\top)\big){\bf U}_{ij}^{n+1} - \lambda_{x_i}({\bf M}\otimes{\bf e}_{0}{\bf e}_{p}^\top){\bf U}_{(i-1)j}^{n+1} \\
  &- \lambda_{y_j}\big((2{\bf D}^\top{\bf M}-{\bf e}_{p}{\bf e}_{p}^\top)\otimes{\bf M}\big){\bf U}_{ij}^{n+1} - \lambda_{y_j}({\bf e}_{0}{\bf e}_{p}^\top\otimes{\bf M}){\bf U}_{i(j-1)}^{n+1} + {\bf V}_{ij}^{(n+1)} = 0,
\end{aligned}
\end{equation}

\noindent where

\begin{equation}\label{eq:2D_graph_viscosity}
\begin{aligned}
 {\bf V}_{ij}^{(n+1)} &= 2d_{ij}\Big(\lambda_{x_i}{\bf M}\otimes\big({\bf M}-\omegab{\bf 1}^\top{\bf M}\big)+\lambda_{y_j}\big({\bf M}-\omegab{\cal\bf 1}^\top{\bf M}\big)\otimes{\bf M}\Big){\bf U}_{ij}^{(n+1)}\\
 &= 2d_{ij}\Big((\lambda_{x_i}+\lambda_{y_j}){\bf I}\otimes{\bf I}-\lambda_{x_i}({\bf I}\otimes\omegab{\bf 1}^\top)-\lambda_{y_j}(\omegab{\bf 1}^\top\otimes{\bf I})\Big)({\bf M}\otimes{\bf M}){\bf U}_{ij}^{(n+1)},
\end{aligned}
\end{equation}

\noindent with $d_{ij}\geq0$, $\omegab=\tfrac{1}{2}(\omega_0,\dots,\omega_p)^\top$ and ${\bf 1}=(1,\dots,1)^\top\in\mathbb{R}^{p+1}$, which reads componentwise as

\begin{equation}\label{eq:graph_viscosity_compnentwise}
 V_{ij}^{kl,n+1} = d_{ij}\frac{\omega_k\omega_l}{2}\Big(\lambda_{x_i}\sum_{m=0}^p\frac{\omega_m}{2}\big(U_{ij}^{kl,n+1}-U_{ij}^{ml,n+1}\big) + \lambda_{y_j}\sum_{m=0}^p\frac{\omega_m}{2}\big(U_{ij}^{kl,n+1}-U_{ij}^{km,n+1}\big)\Big).
\end{equation}

This term keeps conservation of the scheme: $\sum_{k,l}V_{ij}^{kl,n+1}=0$, so the cell-averaged scheme still satisfies \cref{eq:DGSEM_cell_averaged_2d}. It also enforces the $L^2$ stability since

\begin{align*}
 {\bf U}_{ij}\cdot{\bf V}_{ij} \overset{\cref{eq:graph_viscosity_compnentwise}}{=}& d_{ij}\sum_{k,l=0}^p\frac{\omega_k\omega_l}{2}U_{ij}^{kl}\left(\lambda_{x_i}\sum_{m=0}^p\frac{\omega_m}{2}\big(U_{ij}^{kl}-U_{ij}^{ml}\big) + \lambda_{y_j}\sum_{m=0}^p\frac{\omega_m}{2}\big(U_{ij}^{kl}-U_{ij}^{km}\big)\right) \\
=& d_{ij}\sum_{k,l=0}^p\frac{\omega_k\omega_l}{2}\left(\lambda_{x_i}\sum_{m=0}^p\frac{\omega_m}{2}\frac{(U_{ij}^{kl}-U_{ij}^{ml})^2 + (U_{ij}^{kl})^2-(U_{ij}^{ml})^2}{2} + \lambda_{y_j}\sum_{m=0}^p\frac{\omega_m}{2} \frac{(U_{ij}^{kl}-U_{ij}^{km})^2 + (U_{ij}^{kl})^2-(U_{ij}^{km})^2}{2} \right) \\
=& d_{ij}\sum_{k,l=0}^p\frac{\omega_k\omega_l}{2}\left(\lambda_{x_i}\sum_{m=0}^p\frac{\omega_m}{2}\frac{(U_{ij}^{kl}-U_{ij}^{ml})^2}{2} + \lambda_{y_j}\sum_{m=0}^p\frac{\omega_m}{2}\frac{(U_{ij}^{kl}-U_{ij}^{km})^2}{2} \right) \geq 0.
\end{align*}

We now look for conditions on the linear system \cref{eq:2D_discr_DGSEM_with_GV_lin_vector_form} to correspond to an M-matrix, thus imposing a maximum principle for the DOFs.

\begin{lemma}\label{th:Mmat_condition_on_GV}
Under the condition

\begin{equation}\label{eq:cond_on_dij_for_M_matrix}
 d_{ij}\geq 2\max_{0\leq k \neq m \leq p}\bigg(-\frac{D_{mk}}{\omega_k}\bigg),
\end{equation}

\noindent the linear system \cref{eq:2D_discr_DGSEM_with_GV_lin_vector_form} is maximum principle preserving.
\end{lemma}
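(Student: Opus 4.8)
The plan is to cast the linear system \cref{eq:2D_discr_DGSEM_with_GV_lin_vector_form} in the form $\mathbb{A}_{2d}{\bf U}^{(n+1)}=({\bf M}\otimes{\bf M}){\bf U}^{(n)}$ and to show that, under \cref{eq:cond_on_dij_for_M_matrix}, the matrix $\mathbb{A}_{2d}$ is an M-matrix. Since the right-hand side is a nonnegative multiple of the DOFs at time $t^{(n)}$ (the mass matrix ${\bf M}\otimes{\bf M}$ is diagonal with positive entries $\tfrac{\omega_k\omega_l}{4}$), inverse-positivity of $\mathbb{A}_{2d}$ immediately yields positivity preservation, and the equivalence between positivity and maximum-principle preservation established in \cref{th:PP_implies_MP} — which carries over to the 2D scheme because constants are preserved, as verified below — then gives the claim. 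By \cref{th:M-matrix2} it suffices to prove two facts: that all off-diagonal entries of $\mathbb{A}_{2d}$ are nonpositive, and that $\mathbb{A}_{2d}$ is semi-positive.

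First I would verify the sign of the off-diagonal entries from the componentwise form of \cref{eq:fully-discr_DGSEM_2d} augmented with \cref{eq:graph_viscosity_compnentwise}. The inter-cell couplings, with coefficients $-\tfrac{\omega_l}{2}\lambda_{x_i}\delta_{k0}$ and $-\tfrac{\omega_k}{2}\lambda_{y_j}\delta_{l0}$, are nonpositive for any $\lambda_{x_i},\lambda_{y_j}\geq0$ and are untouched by the viscosity. The delicate entries are the within-cell couplings: the coefficient linking $(k,l)$ to $(m,l)$ with $m\neq k$ is the sum of the differentiation contribution $-\tfrac{\omega_l}{2}\lambda_{x_i}\omega_mD_{mk}$ and the viscosity contribution $-d_{ij}\tfrac{\omega_k\omega_l}{2}\lambda_{x_i}\tfrac{\omega_m}{2}$, which equals $-\tfrac{\omega_l\omega_m}{2}\lambda_{x_i}\big(D_{mk}+d_{ij}\tfrac{\omega_k}{2}\big)$. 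This is nonpositive precisely when $d_{ij}\geq-\tfrac{2D_{mk}}{\omega_k}$, and taking the maximum over $m\neq k$ (and, by the same computation for the $y$-direction coupling $(k,l)$ to $(k,m)$, over the transposed indices) reproduces exactly condition \cref{eq:cond_on_dij_for_M_matrix}. This step is the heart of the proof and the only place where the precise threshold on $d_{ij}$ is forced; I expect the bookkeeping of these entries, rather than any conceptual difficulty, to be the main labor.

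It remains to exhibit semi-positivity, and the natural candidate is the global all-ones vector ${\bf x}={\bf 1}$. Applying $\mathbb{A}_{2d}$ to ${\bf 1}$, the graph-viscosity term vanishes identically since each difference $U_{ij}^{kl}-U_{ij}^{ml}$ is zero on constants, and in each of the $x$- and $y$-directions the differentiation, volume-flux and interface contributions collapse to $\sum_m\omega_mD_{mk}-\delta_{kp}+\delta_{k0}=0$ by the telescoping identity \cref{eq:INT}. Hence $(\mathbb{A}_{2d}{\bf 1})_{kl,ij}=\tfrac{\omega_k\omega_l}{4}>0$ for every DOF, which is the semi-positivity required by \cref{th:M-matrix2}; combined with the sign analysis above, $\mathbb{A}_{2d}$ is an M-matrix. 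The same identity $\mathbb{A}_{2d}{\bf 1}=({\bf M}\otimes{\bf M}){\bf 1}$ shows that any spatially constant state is left invariant by the scheme, so that ${\bf U}^{(n+1)}-m{\bf 1}$ and $M{\bf 1}-{\bf U}^{(n+1)}$ solve the same linear system as ${\bf U}^{(n+1)}$; this is what makes the positivity/maximum-principle equivalence of \cref{th:PP_implies_MP} applicable here, completing the argument. The only subtlety beyond the routine algebra is to confirm that no off-diagonal entry couples indices differing in both $k$ and $l$ simultaneously — such entries are identically zero in \cref{eq:fully-discr_DGSEM_2d} and \cref{eq:graph_viscosity_compnentwise}, so the per-direction sign conditions are indeed exhaustive.
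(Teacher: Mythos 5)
Your proposal is correct and follows essentially the same route as the paper: the decisive computation — the sign of the within-cell off-diagonal entries $-\tfrac{\omega_l\omega_m}{2}\lambda_{x_i}\big(D_{mk}+\tfrac{\omega_k}{2}d_{ij}\big)$ forcing exactly \cref{eq:cond_on_dij_for_M_matrix}, and the row-sum collapse to $\tfrac{\omega_k\omega_l}{4}>0$ via \cref{eq:INT} — is identical to the paper's. The only cosmetic difference is that you invoke \cref{th:M-matrix2} with ${\bf x}={\bf 1}$ where the paper invokes strict diagonal dominance (\cref{th:M-matrixCorr}); for a matrix with nonpositive off-diagonal entries these two checks coincide, and your extra remark that constants are preserved (making \cref{th:PP_implies_MP} applicable) is a welcome, if routine, piece of care the paper leaves implicit.
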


\begin{proof}
This is a direct application of \cref{th:M-matrixCorr} to show that the linear system \cref{eq:2D_discr_DGSEM_with_GV_lin_vector_form} is defined from an M-matrix. Positivity preservation is then enough to get maximum principle preservation. We rewrite \cref{eq:2D_discr_DGSEM_with_GV_lin_vector_form} componentwise as

\begin{equation*}
a_{kl}U_{ij}^{kl,n+1} + \sum_{m=0, m\ne k}^{p} b_{klm}U_{ij}^{ml,n+1}  + \sum_{m=0, m\ne l}^{p} c_{klm}U_{ij}^{km,n+1} - \lambda_{x_i}\frac{\omega_l}{2}\delta_{k0}U_{(i-1)j}^{pl,n+1} - \lambda_{y_j}\frac{\omega_k}{2}\delta_{l0}U_{i(j-1)}^{kp,n+1} = \frac{\omega_k \omega_l}{4} U_{ij}^{kl,n}
\end{equation*}

\noindent where the diagonal coefficients read

\begin{equation*}
 a_{kl} = \frac{\omega_k \omega_l}{4} - \lambda_{x_i}\frac{\omega_l}{2}\big(\omega_kD_{kk}-\delta_{kp}\big) - \lambda_{y_j}\frac{\omega_k}{2}\big(\omega_lD_{ll}-\delta_{lp}\big) + \frac{\omega_k \omega_l}{2}d_{ij}\Big(\lambda_{x_i}\sum_{m=0, m\ne k}^{p}\frac{\omega_m}{2}+\lambda_{y_j}\sum_{m=0, m\ne l}^{p}\frac{\omega_m}{2}\Big)
\end{equation*}

\noindent and are positive since $-(\omega_kD_{kk}-\delta_{kp})=\tfrac{1}{2}(\delta_{kp}+\delta_{k0})$ and $-(\omega_lD_{ll}-\delta_{lp})=\tfrac{1}{2}(\delta_{lp}+\delta_{l0})$ from the SBP property \cref{eq:SBP}. Likewise, $b_{klm}=-\lambda_{x_i}\tfrac{\omega_l}{2}\omega_m(D_{mk}+\tfrac{\omega_k}{2}d_{ij})$ and $c_{klm}=-\lambda_{y_j}\tfrac{\omega_k}{2}\omega_m(D_{ml}+\tfrac{\omega_l}{2}d_{ij})$ are nonpositive under \cref{eq:cond_on_dij_for_M_matrix}.

Finally, strict diagonal dominance reads $a_{kl}>-\sum_{m\neq k}b_{klm}-\sum_{m\neq l}c_{klm}+\lambda_{x_i}\tfrac{\omega_l}{2}\delta_{k0}+\lambda_{y_j}\tfrac{\omega_k}{2}\delta_{l0}$ since $b_{klm}\leq0$ and $c_{klm}\leq0$. This reduces to

\begin{equation*}
 \frac{\omega_k \omega_l}{4} - \lambda_{x_i}\frac{\omega_l}{2}\big(\omega_kD_{kk}-\delta_{kp}\big)- \lambda_{y_j}\frac{\omega_k}{2}\big(\omega_lD_{ll}-\delta_{lp}\big) > \lambda_{x_i}\frac{\omega_l}{2}\sum_{m=0, m\ne k}^{p}\omega_mD_{mk} + \lambda_{y_j}\frac{\omega_k}{2}\sum_{m=0, m\ne l}^{p}\omega_mD_{ml} + \lambda_{x_i}\tfrac{\omega_l}{2}\delta_{k0} + \lambda_{y_j}\tfrac{\omega_k}{2}\delta_{l0},
\end{equation*}

\noindent where all the coefficients from the graph viscosity cancel each other out from \cref{eq:graph_viscosity_compnentwise} and have been removed. This can be rearranged into

\begin{equation*}
 \frac{\omega_k \omega_l}{4} > \lambda_{x_i}\frac{\omega_l}{2}\Big(\sum_{m=0}^p\omega_mD_{mk}-\delta_{kp}+\delta_{k0}\Big) + \lambda_{y_j}\frac{\omega_k}{2}\Big(\sum_{m=0}^p\omega_mD_{ml}-\delta_{lp}+\delta_{l0}\Big) \overset{\cref{eq:INT}}{=} 0,
\end{equation*}

\noindent  which is always satisfied and concludes the proof. \qed


\end{proof}

The modified DGSEM scheme with graph viscosity therefore satisfies the maximum principle for large enough $d_{ij}$ values. Table \ref{tab:lower_bounds_d} gives the minimum $d_{ij}$ values \cref{eq:cond_on_dij_for_M_matrix} in \cref{th:Mmat_condition_on_GV} guaranteeing a maximum principle. Likewise, the diagonal blocks in \cref{eq:2D_discr_DGSEM_with_GV_lin_vector_form} are now strictly diagonally dominant and hence invertible.

\begin{table}
  \caption{Lower bounds of the coefficient $d_{ij}>d_{min}$ for \cref{eq:2D_discr_DGSEM_with_GV_lin_vector_form} to be maximum-principle preserving}
  \begin{center}
  \begin{tabular}{lllllll}
    \hline
		p & 1 & 2 & 3 & 4 & 5 & 6 \\
    \hline
		$d_{min}$ & 1 & 3 & 3(1 + $\sqrt{5}$) & 24.8 & 53.6 & 102.6 \\
    \hline
  \end{tabular}
  \label{tab:lower_bounds_d}
  \end{center}
\end{table}

The scheme is however first order in space when $d_{ij}>0$ and is not used in practice. In the following, it is combined with the high-order scheme within the FCT limiter framework to keep high-order accuracy.

\subsection{Flux-corrected transport limiter}\label{sec:FCT_limiter}

Following \cite{Guermond_IDP_NS_2021}, the Flux-corrected transport (FCT) limiter \cite{BORIS_Book_FCT_73,zalesak1979fully} can be applied to guarantee a maximum principle by combining the high-order (HO) DGSEM scheme \cref{eq:2D_discr_DGSEM_lin_vector_form} and the low-order (LO) modified DGSEM scheme \cref{eq:2D_discr_DGSEM_with_GV_lin_vector_form} with graph viscosity. We here propose to use the FCT to guarantee a maximum principle on the cell-averaged solution \cref{eq:DGSEM_cell_averaged_sol_2d}, the maximum principle on all DOFs within the elements being ensured through the use of the linear scaling limiter (see \cref{sec:pos_limiter}) as in one space dimension.

By ${u}_{h,LO}^{(n+1)}$ and ${u}_{h,HO}^{(n+1)}$ we denote the solutions to the LO and HO schemes, respectively. Both are solutions to the cell-averaged scheme \cref{eq:DGSEM_cell_averaged_2d}. Subtracting the cell-averaged for the LO solution from the one for the HO solution gives

\begin{equation*}
\begin{aligned}
    \avg{{u}_{h,HO}^{(n+1)}}_{ij} - \avg{{u}_{h,LO}^{(n+1)}}_{ij}   \overset{\cref{eq:DGSEM_cell_averaged_2d}}{=} & \lambda_{x_i}\sum_{l=0}^{p} \frac{\omega_l}{2} \Big(U_{(i-1)j,HO}^{pl,n+1} - U_{ij,HO}^{pl,n+1} + U_{ij,LO}^{pl,n+1} - U_{(i-1)j,LO}^{pl,n+1}\Big) \\
 & + \lambda_{y_j}\sum_{k=0}^{p} \frac{\omega_k}{2} \Big(U_{i(j-1),HO}^{kp,n+1} - U_{ij,HO}^{kp,n+1} + U_{ij,LO}^{kp,n+1} - U_{i(j-1),LO}^{kp,n+1}\Big) \\
 = & \lambda_{x_i}\sum_{l=0}^{p} \frac{\omega_l}{2} \Big(U_{(i-1)j,HO}^{pl,n+1}  - U_{(i-1)j,LO}^{pl,n+1}\Big) + \lambda_{x_i}\sum_{l=0}^{p} \frac{\omega_l}{2} \Big(- U_{ij,HO}^{pl,n+1}  + U_{ij,LO}^{pl,n+1}\Big) \\
 & + \lambda_{y_j} \sum_{k=0}^{p} \frac{\omega_k }{2}\Big(U_{i(j-1),HO}^{kp,n+1} - U_{i(j-1),LO}^{kp,n+1}\Big) + \lambda_{y_j} \sum_{k=0}^{p} \frac{\omega_k }{2}\Big(- U_{ij,HO}^{kp,n+1} + U_{ij,LO}^{kp,n+1}\Big) \\
 \eqqcolon & A_{ij}^{(i-1)j} + A_{ij}^{(i+1)j} + A_{ij}^{i(j-1)} + A_{ij}^{i(j+1)} \\
 = & \sum_{(r,s)\in \mathcal{S}(i,j)} A_{ij}^{rs}
\end{aligned}
\end{equation*}

\noindent with $\mathcal{S}(i,j) = \{(i-1,j);(i+1,j);(i,j-1);(i,j+1)\}$. Note that we have $A_{ij}^{(i-1)j}=-A_{(i-1)j}^{ij}$ and $A_{ij}^{i(j-1)}=-A_{i(j-1)}^{ij}$. Again following \cite[Sec.~ 5.3]{Guermond_IDP_NS_2021}, we introduce the limiter coefficients defined by

\begin{subequations}\label{eq:FCT_limiter_params}
\begin{align}
P_{ij}^{-} & = \sum_{(r,s)\in \mathcal{S}(i,j)} \min\big(A_{ij}^{rs},0\big) \leq 0, & \quad Q_{ij}^{-} & = m - \avg{u_{LO}^{(n+1)}}_{ij} \leq 0, & l_{ij}^{-} & = \min\Big(1,\frac{Q_{ij}^{-}}{P_{ij}^{-}}\Big) \in [0,1], &  \\
P_{ij}^{+} & = \sum_{(r,s)\in \mathcal{S}(i,j)} \max\big(A_{ij}^{rs},0\big) \geq 0, & Q_{ij}^{+} & = M - \avg{u_{LO}^{(n+1)}}_{ij} \geq 0, & \quad l_{ij}^{+} & = \min\Big(1,\frac{Q_{ij}^{+}}{P_{ij}^{+}}\Big) \in [0,1],
\end{align}
\end{subequations}

\noindent where $m$ and $M$ are the lower and upper bounds in \cref{eq:PDE_max_principle} we want to impose to $\avg{{u}_{h}^{(n+1)}}_{ij}$. The new update of the mean value of the solution is now defined by:

\begin{equation}
    \avg{{u}_{h}^{(n+1)}}_{ij} - \avg{{u}_{h,LO}^{(n+1)}}_{ij}  = \sum_{(r,s)\in \mathcal{S}(i,j)} l_{ij}^{rs} A_{ij}^{rs}, \quad l_{ij}^{rs} = \left\{
    \begin{array}{ll}
        \min(l_{ij}^{-},l_{rs}^{+}) & \mbox{if } A_{ij}^{rs} < 0 \\
        \min(l_{rs}^{-},l_{ij}^{+}) & \mbox{otherwise.}
    \end{array}
\right.
\label{eq:Guermond_limiting_average}
\end{equation}

As a consequence, the cell-averaged solution satisfies the maximum principle (see \cite[Lemma~ 5.4]{Guermond_IDP_NS_2021}): using \cref{eq:Guermond_limiting_average} we get

\begin{align*}
 \avg{{u}_{h}^{(n+1)}}_{ij} - \avg{{u}_{h,LO}^{(n+1)}}_{ij}  \geq & \sum\limits_{(r,s)\in \mathcal{S}(i,j)} l_{ij}^{rs} \min(A_{ij}^{rs},0) = \min(l_{ij}^{-},l_{rs}^{+})P_{ij}^{-} \geq l_{ij}^{-}P_{ij}^{-} \geq Q_{ij}^{-} =  m - \avg{u_{LO}^{(n+1)}}_{ij}, \\
 \avg{{u}_{h}^{(n+1)}}_{ij} - \avg{{u}_{h,LO}^{(n+1)}}_{ij} \leq & \sum\limits_{(r,s)\in \mathcal{S}(i,j)} l_{ij}^{rs} \max(A_{ij}^{rs},0) = \min(l_{rs}^{-},l_{ij}^{+}) P_{ij}^{+} \leq l_{ij}^{+} P_{ij}^{+} \leq  Q_{ij}^{+} = M - \avg{u_{LO}^{(n+1)}}_{ij},
\end{align*}

\noindent since $Q_{ij}^{-}\leq l_{ij}^{-}P_{ij}^{-}\leq 0$ and  $0\leq l_{ij}^{+}P_{ij}^{+}\leq Q_{ij}^{+}$ by definition \cref{eq:FCT_limiter_params}.

Likewise, by \cref{eq:Guermond_limiting_average} we have $l_{ij}^{rs}=l_{rs}^{ij}$ for $(r,s)\in\mathcal{S}(i,j)$, thus ensuring conservation of the method:

\begin{equation*}
 \sum_{ij}\avg{{u}_{h}^{(n+1)}}_{ij} = \sum_{ij}\avg{{u}_{h,HO}^{(n+1)}}_{ij} = \sum_{ij}\avg{{u}_{h,LO}^{(n+1)}}_{ij} =  \sum_{ij}\avg{{u}_{h}^{(n)}}_{ij},
\end{equation*}

\noindent for periodic boundary conditions or compactly supported solutions.


From \cref{eq:Guermond_limiting_average}, the limiter is only applied at the interfaces and the DOFs can be evaluated explicitly from $u_{h,LO}^{(n+1)}$ and $u_{h,HO}^{(n+1)}$ through

\begin{align*}
    \frac{\omega_k\omega_l}{4}\big(U_{ij}^{kl,n+1} - U_{ij,HO}^{kl,n+1}\big) & = \delta_{kp} \frac{\omega_l\lambda_{x_i}}{2}\big(1-l_{ij}^{(i+1)j}\big)\Big(U_{ij,HO}^{pl,n+1}-U_{ij,LO}^{pl,n+1}\Big)  - \delta_{k0} \frac{\omega_l\lambda_{x_i}}{2}\big(1-l_{ij}^{(i-1)j}\big)\Big(U_{(i-1)j,HO}^{pl,n+1}-U_{(i-1)j,LO}^{pl,n+1}\Big) \\
    & + \delta_{lp} \frac{\omega_k\lambda_{y_j}}{2}\big(1-l_{ij}^{i(j+1)}\big)\Big(U_{ij,HO}^{kp,n+1}-U_{ij,LO}^{kp,n+1}\Big) - \delta_{l0} \frac{\omega_k\lambda_{y_j}}{2}\big(1-l_{ij}^{i(j-1)}\big)\Big(U_{i(j-1),HO}^{kp,n+1}-U_{i(j-1),LO}^{kp,n+1}\Big).
\end{align*}

This limited scheme is conservative, satisfies the maximum principle for the cell-averaged solution, but requires to solve two linear systems for $u_{h,LO}^{(n+1)}$ and $u_{h,HO}^{(n+1)}$ at each time step. Let us stress that since we need to compute $u_{h,HO}^{(n+1)}$, we know easily if the limiter is required, that is if the maximum principle is violated for the cell-averaged solution in some cell of the mesh. If it is not violated, we set $u_{h}^{(n+1)}\equiv u_{h,HO}^{(n+1)}$ and do not need to compute $u_{h,LO}^{(n+1)}$, but only to apply the linear scaling limiter (see \cref{sec:pos_limiter}). The FCT limiter may hence be viewed as an a posteriori limiter which is applied when needed after the solution update in the same way as other a posteriori limiters, such as in the MOOD method \cite{clain_etal_MOOD_11}. Preserving the maximum principle on the cell-averaged solution is a weaker requirement than preserving it on every DOFs and should therefore be more likely to be respected. As a consequence, the present FCT limiter is expected to less modify the solution,  which is supported by the numerical experiments of \cref{sec:num_xp_2D}.

In the next section, we also propose efficient algorithms to solve these linear systems to mitigate the extra cost induced by the additional linear solution when $u_{h,LO}^{(n+1)}$ is required.

\subsection{Linear system solution}\label{sec:2D_linear_sys_solution}

Both linear systems without, \cref{eq:2D_discr_DGSEM_lin_vector_form}, and with graph viscosity, \cref{eq:2D_discr_DGSEM_with_GV_lin_vector_form}, result in a block linear system

\begin{equation}\label{eq:global_lin_sys_2D}
 \mathbb{A}_{2d}{\bf U}^{(n+1)} = \mathbb{M}_{2d}{\bf U}^{(n)}
\end{equation}

\noindent of size $N_xN_yN_p$ with blocks of size $N_p=(p+1)^2$ to be solved for ${\bf U}^{(n+1)}$ where ${\bf U}_{n_{kl}+(i-1)N_p+(j-1)N_xN_p}=U_{ij}^{kl}$ with $n_{kl}=1+k+l(p+1)$ and $\mathbb{M}_{2d}$ the global mass matrix. Considering the block structure of $\mathbb{A}_{2d}$ is important for efficiently solving \cref{eq:global_lin_sys_2D} and usually requires the inversion of the diagonal blocks as a main step. These blocks are dense and hence require algorithms of complexity ${\cal O}(N_p^3)$ for their inversion. We propose below algorithms based on the properties of the 1D schemes in \cref{sec:1D_linear_sys_solution} for their efficient inversion. A repository of these algorithms (equations \cref{eq:eigenmodes_matA}, \cref{eq:inverse_L2d}, \cref{eq:2D_diag_block_with_graph_visc} and \cref{algo:invert_L2d^v}) is available at \cite{FastMatrixInverse_github} and \cref{app:fast_inversion_diag_blocks} provides a description of the repository.

\subsubsection{1D diagonal blocks as building blocks of the 2D linear systems}\label{sec:1D_building_blocks}

Let us introduce the diagonalization in $\mathbb{C}$ of the matrix $\mathbfcal{L}$ in \cref{eq:1D_diag_blocks}:

\begin{equation}\label{eq:calL_from_psib}
\mathbfcal{L}={\bf R}\Psib{\bf R}^{-1},
\end{equation}

\noindent where the columns of ${\bf R}\in\mathbb{C}^{(p+1)\times(p+1)}$ are the right eigenvectors of $\mathbfcal{L}$ and $\Psib$ is the diagonal matrix of the corresponding $p+1$ eigenvalues. We therefore have

\begin{equation}\label{eq:L1D_from_psib}
{\bf L}_{1d}={\bf R}\Psib_\lambda{\bf R}^{-1}, \quad \Psib_\lambda={\bf I}-2\lambda\Psib,
\end{equation}

\noindent for the 1D diagonal blocks in \cref{eq:1D_diag_blocks}.

From \cref{eq:1D_diag_blocks}, eigenpairs $\psi$ and ${\bf r}=(r_0,\dots,r_p)^\top$, such that $\mathbfcal{L}{\bf r}=\psi{\bf r}$, satisfy $\sum_lD_{lk}r_l-\delta_{kp}\tfrac{r_p}{\omega_p}=\psi r_k$ and summing this relation over $0\leq k\leq p$ gives $-\tfrac{1}{\omega_p}r_p=\psi\sum_kr_k$ and for $\psi=0$ we would have $r_p=0$, hence ${\bf D}^\top{\bf r}=0$ so ${\bf r}=0$ since ${\bf D}^\top$ is of rank $p$. So we have $\psi\neq0$ and we can invert the above relation with \cref{eq:def_calD} to get ${\bf r}=-\tfrac{r_p}{\psi\omega_p}\big(\sum_{l=0}^p\psi^{-l}{\bf D}^l\big)^\top{\bf e}_p$ and the $p$th component with $r_p\neq0$ gives the $\psi$ as the roots of the polynomial

\begin{subequations}\label{eq:eigenmodes_matA}
\begin{equation}\label{eq:eigenvalues_matA}
 \omega_p\psi^{p+1}+\sum_{l=0}^p\psi^{p-l}D_{pp}^{(l)}=0,
\end{equation}

\noindent and the eigenvector associated to any eigenvalue $\psi$ may be explicitly computed from

\begin{equation}\label{eq:eigenvectors_matA}
 r_k = -\frac{1}{\omega_p}\sum_{l=0}^p\psi^{-l-1}D_{pk}^{(l)} \quad \forall 0\leq k\leq p-1, \quad r_p=1.
\end{equation}
\end{subequations}

\subsubsection{Solution of the HO scheme \cref{eq:2D_discr_DGSEM_lin_vector_form}}

Setting $\lambda=\lambda_{x_i}+\lambda_{y_j}>0$, we rewrite the scheme \cref{eq:2D_discr_DGSEM_lin_vector_form} without graph viscosity as

\begin{align*} 
{\bf L}_{2d}({\bf M}\otimes{\bf M}){\bf U}_{ij}^{n+1} - \lambda_{x_i}({\bf M}\otimes{\bf e}_{0}{\bf e}_{p}^\top){\bf U}_{(i-1)j}^{n+1} - \lambda_{y_j}({\bf e}_{0}{\bf e}_{p}^\top\otimes{\bf M}){\bf U}_{i(j-1)}^{n+1} = ({\bf M}\otimes{\bf M}){\bf U}_{ij}^{n},
\end{align*}

\noindent where the first matrix in the diagonal blocks may be written as follows from the definition of ${\bf L}_{1d}$ in \cref{eq:L1D_from_psib}:
\begin{subequations}\label{eq:L2d_mat_def}
\begin{align}
 {\bf L}_{2d} \coloneqq& \frac{\lambda_{x_i}}{\lambda}{\bf I}\otimes{\bf L}_{1d} + \frac{\lambda_{y_j}}{\lambda}{\bf L}_{1d}\otimes{\bf I} = ({\bf R}\otimes{\bf R})\Psib_{2d}({\bf R}\otimes{\bf R})^{-1} \\
 \Psib_{2d} =& \frac{\lambda_{x_i}}{\lambda}{\bf I}\otimes\Psib_\lambda + \frac{\lambda_{y_j}}{\lambda}\Psib_\lambda\otimes{\bf I}, 
\end{align}
\end{subequations}

\noindent with ${\bf I}$ the identity matrix in $\mathbb{R}^{p+1}$ and ${\bf L}_{1d}$ the 1D operator defined in \cref{eq:1D_diag_blocks}. The diagonal matrix $\Psib_{2d}$ has $1-2(\lambda_{x_i}\psi_l+\lambda_{y_j}\psi_k)\neq0$ as $n_{kl}$th component. Hence the inverse of the diagonal blocks in \cref{eq:2D_discr_DGSEM_lin_vector_form} has an explicit expression

\begin{align}
 ({\bf M}\otimes{\bf M})^{-1}{\bf L}_{2d}^{-1} &= \left(({\bf M}^{-1}{\bf R})\otimes({\bf M}^{-1}{\bf R})\right)\Psib_{2d}^{-1}\left({\bf R}\otimes{\bf R}\right)^{-1}, \label{eq:inverse_L2d}\\
 \Psib_{2d}^{-1} &= \diag\left(\frac{1}{1-2(\lambda_{x_i}\psi_k+\lambda_{y_j}\psi_l)}:\;1\leq n_{kl}=1+k+l(p+1)\leq N_p\right). \nonumber
\end{align}

Note that $\mathbfcal{L}$ in \cref{eq:1D_diag_blocks} depends only on the approximation order of the scheme $p$, not on the $\lambda_{x_i}$ and $\lambda_{y_j}$, so the matrices ${\bf R}$, ${\bf R}^{-1}$, ${\bf M}^{-1}{\bf R}$, $\Psib$, $\Psib_{2d}$, etc. may be computed once from \cref{eq:eigenmodes_matA} at the beginning of the computation.

\subsubsection{Solution of the LO scheme \texorpdfstring{\cref{eq:2D_discr_DGSEM_with_GV_lin_vector_form}}{}}

Including the graph viscosity \cref{eq:2D_graph_viscosity} into \cref{eq:2D_discr_DGSEM_lin_vector_form} modifies the diagonal blocks of the linear system  and we now need to solve

\begin{align*} 
{\bf L}_{2d}^v({\bf M}\otimes{\bf M}){\bf U}_{ij}^{n+1} &- \lambda_{x_i}({\bf M}\otimes{\bf e}_{0}{\bf e}_{p}^\top){\bf U}_{(i-1)j}^{n+1} - \lambda_{y_j}({\bf e}_{0}{\bf e}_{p}^\top\otimes{\bf M}){\bf U}_{i(j-1)}^{n+1} = ({\bf M}\otimes{\bf M}){\bf U}_{ij}^{n},
\end{align*}

\noindent with

\begin{equation}\label{eq:2D_diag_block_with_graph_visc}
 {\bf L}_{2d}^v = {\bf L}_{2d}^0 - {\bf U}_v{\bf V}_v^\top, 
\end{equation}

\noindent and

\begin{subequations}\label{eq:matL2d0_U_V}
\begin{equation}\label{eq:matL2d0_U_V-a}
 {\bf L}_{2d}^0 = {\bf L}_{2d} + 2d_{ij}\lambda{\bf I}\otimes{\bf I} = ({\bf R}\otimes{\bf R})\big(\Psib_{2d}+2 d_{ij}\lambda{\bf I}\otimes {\bf I}\big)({\bf R}\otimes{\bf R})^{-1}, 
\end{equation}

\begin{equation}\label{eq:matL2d0_U_V-b}
 {\bf U}_v = 2d_{ij}\big(\lambda_{x_i}{\bf I}\otimes\omegab,\lambda_{y_j}\omegab\otimes{\bf I}\big), \quad {\bf V}_v = \big({\bf I}\otimes{\bf 1},{\bf 1}\otimes{\bf I}\big),
\end{equation}
\end{subequations}

\noindent where ${\bf U}_v$ and ${\bf V}_v$ are matrices in $\mathbb{R}^{N_p\times(2p+2)}$. Although the diagonal blocks ${\bf L}_{2d}^v$ may be efficiently built from the proposed method \cref{eq:2D_diag_block_with_graph_visc} (i.e., the 1D operators in ${\bf L}_{2d}^0$ plus a low-rank product) and then inverted with a direct solver, we propose below an alternative algorithm for their inversion that is found to be more efficient for polynomial degree up to $p\leq6$ (see \cref{app:fast_inversion_diag_blocks}).
Indeed, the matrix ${\bf L}_{2d}^0$ in \cref{eq:2D_diag_block_with_graph_visc} is easily inverted from \cref{eq:matL2d0_U_V-a} since $\Psib_{2d}+2 d_{ij}\lambda{\bf I}\otimes{\bf I}$ is diagonal. Then, we invert ${\bf L}_{2d}^v$ by using the Woodbury identity:

\begin{align*}
 ({\bf L}_{2d}^v)^{-1} \overset{\cref{eq:2D_diag_block_with_graph_visc}}{=}& \Big({\bf I}\otimes{\bf I} - ({\bf L}_{2d}^0)^{-1}{\bf U}_v{\bf V}_v^\top \Big)^{-1}({\bf L}_{2d}^0)^{-1} \\
 =& \bigg({\bf I}\otimes{\bf I} + ({\bf L}_{2d}^0)^{-1}{\bf U}_v\Big({\bf I}_{2p+2} + {\bf V}_v^\top({\bf L}_{2d}^0)^{-1}{\bf U}_v + \big({\bf V}_v^\top({\bf L}_{2d}^0)^{-1}{\bf U}_v\big)^2 + \dots\Big){\bf V}_v^\top \bigg)\,({\bf L}_{2d}^0)^{-1} \\
 =& \bigg({\bf I}\otimes{\bf I} + ({\bf L}_{2d}^0)^{-1}{\bf U}_v\Big({\bf I}_{2p+2}-{\bf V}_v^\top ({\bf L}_{2d}^0)^{-1} {\bf U}_v\Big)^{-1}{\bf V}_v^\top \bigg)\,({\bf L}_{2d}^0)^{-1},
\end{align*}

\noindent so the diagonal blocks may be inverted with \cref{algo:invert_L2d^v}, where only step 3 requires the inversion of a linear system of lower size with dense algebra tools. Steps 1 and 2 scale with ${\cal O}(4N_p^2)$ FLOPs, while step 3 and 4 require ${\cal O}(4(p+1)N_p^2)$ and ${\cal O}(2(p+1)N_p)$ FLOPs, respectively.

\begin{algorithm}
\caption{Algorithm flowchart for solving the system ${\bf L}_{2d}^v({\bf M}\otimes{\bf M}){\bf x}={\bf b}$ with graph viscosity.}\label{algo:invert_L2d^v}
\begin{algorithmic}[1]
\STATE{solve ${\bf L}_{2d}^0{\bf y}={\bf b}$ for ${\bf y}\in\mathbb{R}^{N_p}$ using \cref{eq:matL2d0_U_V-a}}:

\begin{equation*}
 {\bf y} = ({\bf R}\otimes{\bf R}) \diag\left( \frac{1}{1+2\lambda d_{ij}-2(\lambda_{x_i}\psi_k+\lambda_{y_j}\psi_l)}: \, n_{kl}\coloneqq1\leq 1+k+l(p+1)\leq N_p \right)({\bf R}^{-1}\otimes{\bf R}^{-1}){\bf b};
\end{equation*}
\STATE{solve ${\bf L}_{2d}^0{\bf Z}={\bf U}_v$ for ${\bf Z}\in\mathbb{R}^{N_p\times(2p+2)}$ using \cref{eq:matL2d0_U_V}}:

\begin{equation*}
 {\bf Z} = 2d_{ij}({\bf R}\otimes{\bf R}) \diag\left( \frac{1}{1+2\lambda d_{ij}-2(\lambda_{x_i}\psi_k+\lambda_{y_j}\psi_l)}: \, 1\leq n_{kl}\leq N_p \right) \big(\lambda_{x_i}{\bf R}^{-1}\otimes({\bf R}^{-1}\omegab),\lambda_{y_j}({\bf R}^{-1}\omegab)\otimes{\bf R}^{-1}\big);
\end{equation*}

\STATE{solve $({\bf I}_{2p+2}-{\bf V}_v^\top{\bf Z}){\bf z}={\bf V}_v^\top{\bf y}$ for ${\bf z}\in\mathbb{R}^{2p+2}$;}
\STATE{set ${\bf x} = ({\bf M}^{-1}\otimes{\bf M}^{-1})({\bf y} + {\bf Z}{\bf z})$.}
\end{algorithmic}
\end{algorithm}

%
%
\section{Numerical experiments}\label{sec:num_xp}

In this section we present numerical experiments on problems in one and two space dimensions (\cref{sec:num_xp_1D,sec:num_xp_2D}) in order to illustrate the properties of the DGSEM considered in this work. The FCT limiter \cref{eq:Guermond_limiting_average} is applied in the 2D experiments only. A maximum principle holds for the cell-averaged solution, $m\leq\avg{u_h^{(n+1)}}\leq M$, in one space dimension and in two space dimensions with the FCT limiter. We then apply the linear scaling limiter from \cite{zhang_shu_10a} described in \cref{sec:pos_limiter} to enforce a maximum principle on all the DOFs within the cells.

\subsection{Linear scaling limiter}\label{sec:pos_limiter}

Assuming $\avg{u^{(n+1)}}_\kappa \in [m,M]$ in a cell $\kappa$ (either $\kappa_i$ in 1D, or $\kappa_{ij}$ in 2D), Zhang and Shu \cite{zhang_shu_10a} proposed to modify ${\bf U}_\kappa^{(n+1)}$, the vector of DOFs in $\kappa$, as follows:

\begin{equation}\label{eq:pos_limiter}
    \widetilde{\bf U}_\kappa^{(n+1)} =\theta_\kappa {\bf U}_\kappa^{(n+1)} + (1-\theta_\kappa) \avg{u_h^{(n+1)}}_\kappa{\bf 1},
    \quad \theta_\kappa =\min \left(\left|\frac{M-\avg{u_h^{(n+1)}}_\kappa}{\max{\bf U}_\kappa-\avg{u_h^{(n+1)}}_\kappa}\right|,\left|\frac{m-\avg{u_h^{(n+1)}}_\kappa}{\min{\bf U}_\kappa-\avg{u_h^{(n+1)}}_\kappa}\right|,1 \right),
\end{equation}

\noindent with ${\bf 1}=(1,1,\dots,1)^\top\in\mathbb{R}^{(p+1)^d}$, and $\max{\bf U}_\kappa$ (resp., $\min{\bf U}_\kappa$) is the maximum (resp., minimum) value of the DOFs in the vector ${\bf U}_\kappa^{k,n+1}$. This limiter does not affect the high-order of accuracy for smooth solutions and does not change the cell average of the solution thus keeping the method conservative \cite{zhang_shu_10a}.

\subsection{One space dimension}\label{sec:num_xp_1D}

\subsubsection{Space-time accuracy}

First, the accuracy in time of the scheme can be checked with a smooth initial condition on a series of uniform grids with $\lambda_{1\leq i\leq N_x}=\lambda$. \Cref{tab:errImpl} displays the error levels and associated numerical orders of convergence with and without the linear scaling limiter \cref{eq:pos_limiter}. An accuracy of order one in time is observed and the limiter does not affect the time accuracy of the method.

\begin{table}
    \caption{Linear transport problem with $c_x=1$ and the initial data $u_0(x)=sin(2\pi x)$: $L^{k\in\{2,\infty\}}$ error levels $\|u_h-u\|_{L^k(\Omega_h\times[0,1])}$ and associated orders of convergence ${\cal O}_k$ obtained for $p=2$, $\lambda=1$ and refining the mesh. The linear scaling limiter \cref{eq:pos_limiter} is applied or not.}
    \centering
    \begin{tabular}{llllllllll}
        \hline
         & \multicolumn{4}{l}{no limiter} && \multicolumn{4}{l}{linear scaling limiter} \\
         \cline{2-5} \cline{7-10}
         $N_x$ & $L^{2}$ error & ${\cal O}_2$  & $L^{\infty}$ error & ${\cal O}_\infty$ && $L^{2}$ error & ${\cal O}_2$  & $L^{\infty}$ error & ${\cal O}_\infty$ \\
        \hline
         $20$ & 0.2650 & - & 0.3747 & - && 0.2650  & - & 0.3747 & - \\
         $40$ & 0.1517 & 0.80 & 0.2145 & 0.80 && 0.1517  & 0.80 & 0.2145 & 0.80 \\
         $80$ & 8.133E-2 & 0.90 & 0.1150 & 0.90 && 8.133E-2 & 0.90 & 0.1150 & 0.90 \\
         $160$ & 4.212E-2 & 0.95 & 5.956E-2 & 0.95 && 4.212E-2 & 0.95 & 5.956E-2 & 0.95  \\
         $320$ & 2.143E-2 & 0.97 & 3.031E-2 & 0.97 && 2.143E-2 & 0.97 & 3.031E-2 & 0.97 \\
         $640$ & 1.081E-2 & 0.99 & 1.529E-2 & 0.99 && 1.081E-2 & 0.99 & 1.529E-2 & 0.99 \\
        \hline
    \end{tabular}
    \label{tab:errImpl}
\end{table}


Then, the spatial accuracy is checked by looking for steady-state solutions of the following problem with a geometric source term and an inflow boundary condition:

\begin{equation} \label{eq:steady_state_1D}
  \partial_t u + \partial_x u = 2\pi\cos (2\pi x) \quad \mbox{in } [0,1] \times [0,T], \quad u(0, \cdot) = 0 \quad \mbox{in } [0,T],
\end{equation}

\noindent whose exact solution reads $u(x)=\sin (2\pi x)$. We take $\lambda = 1$, start from  $u_0(x)=0$, and march in time until $\|u_h^{n+1}-u_h^{n}\|_2 \leq 10^{-14}$. The $p+1$ accuracy of DGSEM is observed in \cref{tab:errImpl_space}. As expected \cite{zhang_shu_10a,zhang2012maximum}, the limiter does not affect the accuracy of the method.

\begin{table}
    \centering
    \caption{Steady-state problem \cref{eq:steady_state_1D}: $L^{k\in\{2,\infty\}}$ error levels $\|u_h-u\|_{L^k(\Omega_h)}$ and associated orders of convergence ${\cal O}_k$ obtained with $\lambda=1$ when refining the mesh. The linear scaling limiter \cref{eq:pos_limiter} is applied or not.}
    \begin{tabular}{lllllllllll}
        \hline
         & & \multicolumn{4}{l}{no limiter} && \multicolumn{4}{l}{linear scaling limiter} \\
         \cline{3-6} \cline{8-11}
         $p$& $N_x$ & $L^{2}$ error & ${\cal O}_2$  & $L^{\infty}$ error & ${\cal O}_\infty$ && $L^{2}$ error & ${\cal O}_2$  & $L^{\infty}$ error & ${\cal O}_\infty$ \\
        \hline
        \multirow{4}{*}{1}
         & 20 & 2.092E-2 & - & 4.071E-2 & - && 1.900E-2  & - & 4.071E-2 & - \\
         & 40 & 5.239E-3 & 2.00 & 1.025E-2 & 1.99 && 4.997E-3  & 1.93 & 1.025E-2 & 1.99 \\
         & 80 & 1.310E-3 & 2.00 & 2.569E-3 & 2.00 && 1.280E-3 & 1.96 & 2.569E-3 & 2.00 \\
         & 160 & 3.276E-4 & 2.00 & 6.424E-4 & 2.00 && 3.239E-4 & 1.98 & 6.424E-4 & 2.00  \\
        \\
        \multirow{4}{*}{2}
         & 20 & 4.164E-4 & - & 1.274E-2 & - && 4.256E-4  & - & 1.274E-2 & - \\
         & 40 & 5.210E-5 & 3.00 & 1.609E-4 & 2.99 && 5.226E-5   & 3.03 & 1.609E-4  & 2.99 \\
         & 80 & 6.515E-6 & 3.00 & 2.017E-5 & 3.00 && 6.517E-6 & 3.00 & 2.017E-5& 3.00 \\
         & 160 & 8.144E-7 & 3.00 & 2.523E-6 & 3.00 && 8.144E-7 & 3.00 & 2.523E-6 & 3.00  \\
        \\
        \multirow{4}{*}{3}
         & 20 & 6.978E-6 & - & 2.669E-5 & - && 6.978E-6  & - & 2.669E-5  & - \\
         & 40 & 4.365E-7 & 4.00 & 1.685E-6 & 3.99 && 4.365E-7  & 4.00 & 1.685E-6  & 3.99 \\
         & 80 & 2.729E-8 & 4.00 & 1.056E-7 & 4.00 && 2.729E-8 & 4.00 & 1.056E-7 & 4.00 \\
         & 160 & 1.706E-9 & 4.00 & 6.604E-9 & 4.00 && 1.706E-9 & 4.00 & 6.604E-9 & 4.00  \\
        \\
        \multirow{4}{*}{4}
         & 20 & 1.008E-07 & - & 4.493E-07 & - && 1.008E-07  & - & 4.493E-07  & - \\
         & 40 & 3.153E-09 & 5.00 & 1.418E-08 & 4.99 && 3.153E-09  & 5.00 & 1.418E-08  & 4.99 \\
         & 80 & 9.854E-11 & 5.00 & 4.444E-10 & 5.00 && 9.854E-11 & 5.00 & 4.444E-10 & 5.00 \\
         & 160 & 3.080E-12 & 5.00 & 1.392E-11 & 5.00 && 3.080E-12 & 5.00 & 1.392E-11 & 5.00  \\
       \\
       \multirow{4}{*}{5}
         & 20 & 1.253E-09 & - & 6.274E-09 & - && 2.237E-09  & - & 1.249E-08  & - \\
         & 40 & 1.959E-11 & 6.00 & 9.902E-11 & 5.99 && 2.851E-11  & 6.29 & 1.978E-10  & 5.98 \\
         & 80 & 3.061E-13 & 6.00 & 1.554E-12 & 5.99 && 3.826E-13 & 6.22 & 3.111E-12 & 5.99 \\
         & 160 & 1.182E-14 & 4.69 & 1.125E-13 & 3.79 && 1.334E-14 & 4.84 & 1.395E-13 & 4.48  \\
        \hline
    \end{tabular}
    \label{tab:errImpl_space}
\end{table}

\subsubsection{Maximum-principle preservation}

We now compare experiments with the theoretical bounds on the time to space steps ratio $\lambda$ indicated in \cref{tab:lower_bounds_CFL} for the DGSEM scheme to be maximum-principle preserving. We use a discontinuous initial condition  composed of a Gaussian, a square pulse, a sharp triangle and a combination of semi-ellipses \cite{LiuQiu15} in the range $0\leq x\leq 1$:

\begin{equation} \label{eq:init_cond_ZS}
    u_0(x) =
    \left\{
    \begin{array}{ll}
        \frac{1}{6}\big(G(x, \beta, z-\delta) + 4G(x, \beta, z) + G(x, \beta, z+\delta)\big) & 0.04 \leq x \leq 0.24, \\
        1 & 0.28 \leq x \leq 0.48, \\
        1 - 10|x-0.62| & 0.52 \leq x \leq 0.72,  \\
        \frac{1}{6}\big(F(x, \alpha, a-\delta) + 4F(x, \alpha, a) + F(x, \alpha, a+\delta)\big) & 0.76 \leq x \leq 0.96, \\
        0 & \mbox{else}.
    \end{array}
\right.
\end{equation}

\noindent with $G(x, \beta, z)=e^{-\beta (x-z)^2}$, $F(x, \alpha, a)=\sqrt{\max(1-\alpha^2(x-a)^2, 0)}$, $a=0.86$, $z=0.14$, $\delta=0.005$, $\alpha=10$ and $\beta=\frac{\log(2)}{36\delta^2}$.

Table \ref{tab:maxPrcImp} displays the minimum and the maximum values of the cell average solution of \cref{eq:discr_DGSEM_lin} after a short physical time for different approximation orders and different values of $\lambda$. The results are in good agreement with the theoretical lower bounds in \cref{tab:lower_bounds_CFL} and for $p\geq2$ the maximum principle is seen to be violated on at least one mesh for the lowest value $\lambda=0.1$.

\begin{table}
    \centering
    \caption{Linear scalar equation with a discontinuous initial condition \cref{eq:init_cond_ZS}: evaluation of the maximum principle for the cell-averaged solution as proved in \cref{th:MP_preservation_CFL} and \cref{tab:lower_bounds_CFL} after a short physical time $t=0.01$. The solution should remain in the interval $[0,1]$. The linear scaling limiter \cref{eq:pos_limiter} is always applied.}
    \begin{tabular}{lllllll}
    \hline
     & & \multicolumn{2}{l}{$N_x = 100$} && \multicolumn{2}{l}{$N_x = 101$} \\
     \cline{3-4}  \cline{6-7}
     $p$  &  $\lambda$ & $\underset{1\leq i\leq N_x}{\min} \avg{u_h}_i$ & $\underset{1\leq i\leq N_x}{\max}\avg{u_h}_i$ && $\underset{1\leq i\leq N_x}{\min} \avg{u_h}_i$ & $\underset{1\leq i\leq N_x}{\max}\avg{u_h}_i$ \\
     \hline
       & 0.1 & 0.0 & 1.0 && 0.0 & 1.0 \\
     1 & 0.25 & 9.88E-9 & 1.0 && 3.68E-10 & 1.0  \\
       & 0.5 & 6.14E-6 & 1.0 && 9.40E-7 & 1.0 \\
     \\
       & 0.1 & -3.92E-3 & 1.0005 && -5.43E-3 & 1.005 \\
     2 & 0.25 & 0.0 & 1.0 && 0.0 & 1.0 \\
       & 0.5 & 5.35E-7 & 1.0 && 6.29E-8 & 1.0  \\
    \\
       & 0.1 & 0.0 & 1.0 && -5.52E-3 & 1.006 \\
     3 & 0.195137 & 0.0 & 1.0 && 0.0 & 1.0 \\
       & 0.5 & 6.29E-7 & 1.0 && 8.91E-8 & 1.0 \\
    \\
       & 0.1 & -4.00E-4 & 1.0002 && -3.58E-5 & 1.000007 \\
     4 & 0.151 & 0.0 & 1.0 && 0.0 & 1.0 \\
       & 0.5 & 1.32E-7 & 1.0 && 8.93E-8 & 1.0 \\
    \\
       & 0.1 & 0.0 & 1.0 && -2.45E-7 & 1.0005 \\
     5 & 0.147568 & 0.0 & 1.0 && 0.0 & 1.0 \\
       & 0.5 & 9.92E-8 & 1.0 && 9.06E-8 & 1.0 \\
    \\
       & 0.10 & -1.78E-05 & 1.000013 && -1.44E-4 & 1.000144 \\
     6 & 0.109977 & 0.0 & 1.0 && 0.0 & 1.0 \\
       & 0.5 & 0.0 & 1.0 && 0.0 & 1.0 \\
    \hline
    \end{tabular}
    \label{tab:maxPrcImp}
\end{table}

In \cref{tab:exp_lower_bounds_CFL} we experimentally evaluate the lower bound on $\lambda$ to guarantee a  maximum principle on the cell-averaged solution by using a bisection method from the same configuration as in \cref{tab:maxPrcImp}. We observe that the theoretical lower bound on $\lambda$ derived in \cref{th:MP_preservation_CFL} and \cref{tab:lower_bounds_CFL} is sharp and confirmed by the experimental observations, though it seems to slightly overestimate the experimental lower bound for $p=3$ and $p=5$. Let us recall that the condition $\lambda>\lambda_{min}$ in \cref{tab:lower_bounds_CFL} is a sufficient to obtain maximum principle preservation.

\begin{table}
  \begin{center}
  \caption{Experimental evaluation of the lower bounds of the time to space steps ratio $\lambda_{min}^{exp}$ such that $\lambda_{1\leq i\leq N_x}=\lambda\geq\lambda_{min}^{exp}$ ensures the maximum principle preservation for the cell-averaged solution in \cref{th:MP_preservation_CFL} and \cref{tab:lower_bounds_CFL}, while it doesn't for $\lambda\leq\lambda_{min}^{exp}-10^{-2}$ on meshes with $N_x=100$ and $N_x=101$ elements. We report the theoretical values from \cref{tab:lower_bounds_CFL} in the bottom line for the sake of comparison.}
  \begin{tabular}{lllllll}
    \hline
		p & 1 & 2 & 3 & 4 & 5 & 6\\
    \hline
		$\lambda_{min}^{exp}$ & $0$ & $0.25$ & $0.17$ & $0.16$ & $0.13$ & $0.11$\\
		$\lambda_{min}$ & $0$ & $0.25$ & $0.195137$ & $0.150346$ & $0.147568$ & $0.109977$\\
    \hline
  \end{tabular}
  \label{tab:exp_lower_bounds_CFL}
  \end{center}
\end{table}

\subsubsection{Linear advection-reaction with source}\label{sec:adv_reac_pb_1D}

We finally consider a linear advection-reaction problem with a geometric source term:

\begin{equation}\label{eq:adv_reaction_source_pb}
 \partial_tu + c_x\partial_xu + \beta u = s(x) \quad \text{in } \Omega\times(0,\infty), \quad u(x,0)=u_0(x) \quad \text{in }\Omega.
\end{equation}

\noindent with $\beta\geq0$ and $s(\cdot)\geq0$. Providing nonnegative initial and boundary data are imposed, the solution remains nonnegative for all time.

We here adapt the problem representative of the radiative transfer equations from \cite[Ex.~6.2]{Xu_Shu_cons_PP_2023} with $c_x=1$, $\beta=6000$, $s(x)=\beta(\tfrac{1}{9}\cos^4(2\pi x)+\epsilon)-\tfrac{4}{9}\cos^3(2\pi x)\sin(2\pi x)$ on $\Omega=[0,3]$, $\epsilon=10^{-14}$, and an inflow boundary condition $u(0,t)=\tfrac{1}{9}+\epsilon$ on $\Omega=[0,\pi]$. This problem has a steady-state smooth solution, but with low positive values and large oscillations: $u(x)=\tfrac{1}{9}\cos^4(2\pi x)+\epsilon\geq\epsilon$ (see \cref{fig:1D_adv_reac_pb}).

We again set $\lambda=1$ and iterate up to convergence $\|u_h^{n+1}-u_h^{n}\|_2 \leq 10^{-14}$. Table \ref{tab:err_1D_adv_reac_pb} displays the error levels obtained for different approximation orders and mesh refinements when applying the scaling limiter \cref{eq:pos_limiter} or not, together with the evaluation of the lowest value of the DGSEM solution. The limiter keeps the high-order accuracy of the DGSEM, while it successfully preserves positivity of the solution thus confirming that the DGSEM preserves positivity of the cell-averaged solution before the application of the limiter. We however observe a suboptimal $p$th order of accuracy with or without the limiter which was also reported in preceding experiments \cite{chen_shu_17} and can be attributed to the low accuracy of the Gauss-Lobatto quadrature rules applied to the nonlinear geometric source term compared to other quadrature rules \cite{cockburn-shu90} (see also \cite[Remark~3.1]{chen_shu_17}).

\begin{figure}
    \centering
    \includegraphics[width=8cm]{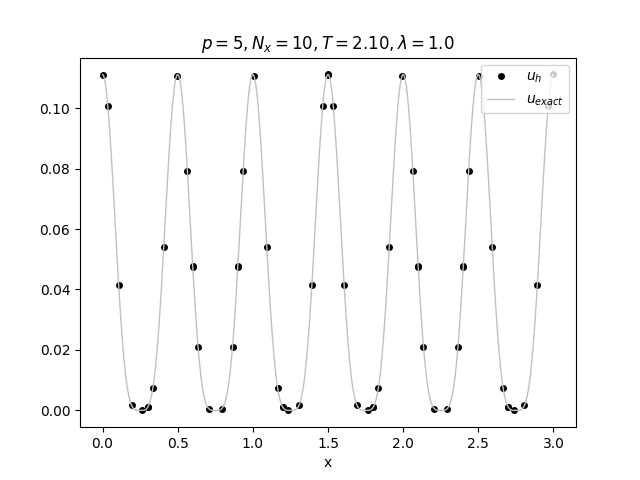}
    \caption{Advection-reaction equation with source \cref{eq:adv_reaction_source_pb}: steady-state DGSEM solution for $p=5$, $N_x=10$ and the linear scaling limiter \cref{eq:pos_limiter}. The solution is plotted at quadrature points and $T$ refers to the pseudo time required to converge the solution, i.e., $\|u_h^{n+1}-u_h^{n}\|_2 \leq 10^{-14}$.}
    \label{fig:1D_adv_reac_pb}
\end{figure}

\begin{table}
    \centering
    \caption{Advection-reaction problem with source \cref{eq:adv_reaction_source_pb}: $L^{k\in\{2,\infty\}}$ error levels $\|u_h-u\|_{L^k(\Omega_h)}$ and associated orders of convergence ${\cal O}_k$ obtained with $\lambda=1$ when refining the mesh. The solution should remain in the interval $[0,\tfrac{1}{9}]$. Minimum value of DOFs $u_{h_{\min}}=\min(U_{1\leq i\leq N_x}^{0\leq k\leq p})$.  The linear scaling limiter \cref{eq:pos_limiter} is applied or not.}
    \begin{tabular}{lllllllllllll}
        \hline
         & & \multicolumn{5}{l}{no limiter} && \multicolumn{5}{l}{linear scaling limiter} \\
         \cline{3-7} \cline{9-13}
         $p$& $N_x$ & $u_{h_{\min}}$ & $L^{2}$ error & ${\cal O}_2$ & $L^{\infty}$ error & ${\cal O}_\infty$ && $u_{h_{\min}}$ & $L^{2}$ error & ${\cal O}_2$  & $L^{\infty}$ error & ${\cal O}_\infty$ \\
        \hline
         & 10 & 9.39E-04 & 1.89E-04 & -- & 1.70E-04 & -- && 9.39E-04 & 1.96E-04 & -- & 1.70E-04 & --\\
         & 20 & -5.28E-05 & 1.50E-04 & 0.33 & 1.71E-04 &-0.01 && 9.99E-15 & 1.49E-04 & 0.39 & 1.71E-04 &-0.01\\
       1 & 40 & -1.04E-05 & 8.90E-05 & 0.76 & 1.02E-04 & 0.74 && 9.99E-15 & 8.42E-05 & 0.83 & 1.02E-04 & 0.74\\
         & 80 & -1.46E-06 & 4.63E-05 & 0.94 & 5.35E-05 & 0.94 && 1.00E-14 & 4.43E-05 & 0.93 & 5.35E-05 & 0.94\\
         & 160 & -3.01E-07 & 2.32E-05 & 1.00 & 2.70E-05 & 0.98 && 1.00E-14 & 2.26E-05 & 0.97 & 2.68E-05 & 1.00\\
        \\
         & 10 & -4.33E-08 & 1.35E-04 & -- & 1.60E-04 & -- && 9.99E-15 & 1.26E-04 & -- & 1.60E-04 & --\\
         & 20 & -3.17E-05 & 5.52E-05 & 1.29 & 7.38E-05 & 1.12 && 1.00E-14 & 6.78E-05 & 0.89 & 1.57E-04 & 0.03\\
       2 & 40 & -7.44E-06 & 1.58E-05 & 1.80 & 2.24E-05 & 1.72 && 1.00E-14 & 1.81E-05 & 1.91 & 4.35E-05 & 1.86\\
         & 80 & -1.05E-06 & 4.11E-06 & 1.95 & 5.90E-06 & 1.93 && 1.00E-14 & 4.21E-06 & 2.10 & 6.51E-06 & 2.74\\
         & 160 & -1.31E-07 & 1.03E-06 & 1.99 & 1.53E-06 & 1.94 && 1.00E-14 & 1.04E-06 & 2.02 & 1.53E-06 & 2.09\\
        \\
         & 10 & 1.62E-04 & 7.86E-05 & -- & 1.27E-04 & -- && 1.62E-04 & 8.90E-05 & -- & 1.27E-04 & --\\
         & 20 & -5.33E-06 & 1.60E-05 & 2.29 & 2.71E-05 & 2.22 && 9.99E-15 & 1.71E-05 & 2.38 & 3.66E-05 & 1.79\\
       3 & 40 & -1.46E-06 & 2.25E-06 & 2.83 & 3.84E-06 & 2.82 && 1.00E-14 & 2.71E-06 & 2.66 & 7.75E-06 & 2.24\\
         & 80 & -2.59E-07 & 2.86E-07 & 2.98 & 4.81E-07 & 3.00 && 1.00E-14 & 3.30E-07 & 3.04 & 1.07E-06 & 2.85\\
         & 160 & -3.36E-08 & 3.52E-08 & 3.02 & 6.22E-08 & 2.95 && 1.00E-14 & 3.81E-08 & 3.11 & 1.35E-07 & 2.99\\
        \\
         & 10 & -1.97E-05 & 3.90E-05 & -- & 6.46E-05 & -- && 9.99E-15 & 4.35E-05 & -- & 6.54E-05 & --\\
         & 20 & -5.26E-06 & 3.72E-06 & 3.39 & 6.56E-06 & 3.30 && 1.00E-14 & 5.68E-06 & 2.94 & 2.05E-05 & 1.67\\
       4 & 40 & -2.65E-07 & 2.58E-07 & 3.85 & 4.55E-07 & 3.85 && 1.00E-14 & 3.08E-07 & 4.21 & 1.33E-06 & 3.94\\
         & 80 & -8.99E-09 & 1.66E-08 & 3.96 & 3.11E-08 & 3.87 && 1.00E-14 & 1.71E-08 & 4.17 & 4.76E-08 & 4.81\\
         & 160 & -2.73E-10 & 1.04E-09 & 3.99 & 2.08E-09 & 3.90 && 1.00E-14 & 1.04E-09 & 4.03 & 2.08E-09 & 4.51\\
        \\
         & 10 & 1.95E-06 & 1.56E-05 & -- & 2.70E-05 & -- && 1.95E-06 & 1.92E-05 & -- & 3.48E-05 & --\\
         & 20 & -3.30E-07 & 7.12E-07 & 4.46 & 1.32E-06 & 4.35 && 9.99E-15 & 7.78E-07 & 4.63 & 1.32E-06 & 4.72\\
       5 & 40 & -2.62E-08 & 2.41E-08 & 4.88 & 4.58E-08 & 4.8 && 1.00E-14 & 3.05E-08 & 4.67 & 7.84E-08 & 4.08\\
         & 80 & -1.06E-09 & 7.54E-10 & 5.00 & 1.38E-09 & 5.04 && 1.00E-14 & 9.22E-10 & 5.05 & 3.10E-09 & 4.66\\
         & 160 & -3.14E-11 & 2.29E-11 & 5.04 & 4.63E-11 & 4.91 && 1.00E-14 & 2.56E-11 & 5.17 & 9.06E-11 & 5.10\\
            \hline
    \end{tabular}
    \label{tab:err_1D_adv_reac_pb}
\end{table}


%
\subsection{Two space dimensions}\label{sec:num_xp_2D}

We now focus on numerical tests in two space dimensions in the unit square using a Cartesian mesh with $N_x=N_y$ cells in the $x$ and $y$ directions respectively. For all the tests, we set $c_x = c_y = 1$. We here compare results obtained with the DGSEM scheme and without or with the FCT limiter:

\begin{description}
 \item{\textbf{no limiter}:} we solve \cref{eq:2D_discr_DGSEM_lin_vector_form} without graph viscosity for  $u_h^{(n+1)}$. We cannot apply the linear scaling limiter \cref{eq:pos_limiter} since the $\avg{u_h^{(n+1)}}_{ij}$ are not guaranteed to satisfy the maximum principle;
 \item{\textbf{FCT limiter}:} we first solve \cref{eq:2D_discr_DGSEM_lin_vector_form} without graph viscosity for $u_{h,HO}^{(n+1)}$ and check if the cell-averaged solution satisfies the maximum principle, and if so, we set $u_{h}^{(n+1)}\equiv u_{h,HO}^{(n+1)}$. If not, we solve \cref{eq:2D_discr_DGSEM_with_GV_lin_vector_form} with graph viscosity for $u_{h,LO}^{(n+1)}$ and apply the FCT limiter \cref{eq:Guermond_limiting_average} introduced in \cref{sec:FCT_limiter}. Finally, we apply the linear scaling limiter \cref{eq:pos_limiter} after the FCT limiter to preserve a maximum principle on all the DOFs in  $u_h^{(n+1)}$.
\end{description}

\subsubsection{Maximum-principle preservation}

We first evaluate both DGSEM schemes on an unsteady problem with a discontinuous initial condition, $u_0(x,y)=1$ if $|x-\frac{1}{4}|+|y-\frac{1}{4}|\leq0.15$ and $0$ else, and periodic boundary conditions. Table \ref{tab:maxPrcImp2D} gives the minimum and maximum values of the cell-averaged solution after one time step with $1\leq p\leq 5$ and different values of $\lambda_x=\lambda_y$. The maximum principle is not satisfied when using the DGSEM without limiter, except for $p=1$ and the smallest time step. In particular, the maximum principle is violated even for large $\lambda_x=\lambda_y$ in contrast to what is observed and proved in one space dimension. As expected, the FCT limiter successfully imposes a maximum principle on the cell-averaged solution, thus enabling a maximum principle through the use of the linear scaling limiter.


%
\begin{table}
    \centering
    \caption{Verification of the maximum principle for problem \cref{eq:hyp_2Dcons_laws} after one time step on a mesh with $N_x \times N_y = 20 \times 20$ elements, $\lambda_{x_i}=\lambda_{y_j}=\lambda$, and the discontinuous initial condition $u_0(x,y)=1_{|x-\frac{1}{4}|+|y-\frac{1}{4}|\leq0.15}$. The solution should remain in the interval $[0, 1]$.}
    \begin{tabular}{lllllll}
    \hline
     & & \multicolumn{2}{l}{no limiter} && \multicolumn{2}{l}{FCT limiter}\\
     \cline{3-4} \cline{6-7}
     p  &  $\lambda$ & $\underset{1\leq i,j\leq20}{\min} \avg{u_h}_{ij}$ & $\underset{1\leq i,j\leq20}{\max}\avg{u_h}_{ij}$ && $\underset{1\leq i,j\leq20}{\min} \avg{u_h}_{ij}$ & $\underset{1\leq i,j\leq20}{\max}\avg{u_h}_{ij}$ \\
     \hline
       & 0.05 & 0.0 & 1.0 && 0.0 & 1.0 \\
     1 & 1 & -9.45E-3 & 0.90 && 9.59E-08 & 0.90 \\
       & 5 & -9.45E-3  & 0.47 && 1.37E-02 & 0.49 \\
    \\
       & 0.05 & -6.76E-4 & 1.0002 && 0.0 & 1.0 \\
     2 & 1 & -6.76E-3 & 0.93 && 1.42E-07 & 0.90  \\
       & 5 & -6.60E-3 & 0.44 && 2.01E-3 & 0.41 \\
    \\
       & 0.05 & -4.85E-8 & 1.0 && 0.0 & 1.0 \\
     3 & 1 & -2.98E-3 & 0.92 && 5.40E-07 & 0.91 \\
       & 5 & -6.24E-4 & 0.44 && 3.22E-03 & 0.38 \\
    \\
       & 0.05 & -1.41E-4 & 1.0007 && 0.0 & 1.0 \\
     4 & 1 & -1.33E-4 & 0.92 && 5.97E-07 & 0.92 \\
       & 5 & -6.09E-5 & 0.44 && 3.25E-03 & 0.38 \\
    \\
       & 0.05 & -1.31E-3 & 1.0 && 0.0 & 1.0 \\
     5 & 1 & -8.80E-5 & 0.92 && 6.45E-07 & 0.92 \\
       & 5 & -1.10E-4 & 0.44 && 3.33E-03 & 0.38 \\
    \hline
    \end{tabular}
    \label{tab:maxPrcImp2D}
\end{table}

\subsubsection{Steady smooth solution}

We now consider a smooth steady-state solution of the problem $\partial_x u + \partial_y u = 0$ in $\Omega=[0,1]^2$ with inlet conditions $u(x,0)=\sin(2\pi x)$, $u(0,y)=-\sin(2\pi y)$ and outflow conditions at boundaries $x=1$ and $y=1$. The exact solution is $u(x,y)=\sin(2\pi(x-y))$. In practice, we look for a steady solution to the unsteady problem \cref{eq:hyp_cons_laws}. We take $\lambda_x=\lambda_y=5$, start from $u_h^{(0)}\equiv0$ and march in time until $\|u_h^{n+1}-u_h^{n}\|_2 \leq 10^{-14}$ with the DGSEM scheme with FCT limiter. Error levels are summarized in \cref{tab:err_2D_smooth_steady_pb} together with the minimum and maximum values of the cell-averaged solution. The FCT limiter keeps here the $p+1$ high-order accuracy in space of the DGSEM while it successfully preserves the maximum principle on the cell-averaged solution, and hence also on the DOFs through the linear scaling limiter.

\begin{table}
    \centering
    \caption{Smooth steady-state problem: $L^{k\in\{2,\infty\}}$ error levels $\|u_h-u\|_{L^k(\Omega_h)}$ and associated orders of convergence ${\cal O}_k$ for problem $\partial_x u + \partial_y u = 0$ with data $u(x,0)=\sin(2\pi x)$, $u(0,y)=-\sin(2\pi y)$ obtained with $\lambda_x=\lambda_y=5$ when refining the mesh and using the FCT limiter. The solution should remain in the interval $[-1, 1]$. Minimum and maximum values of the cell-averaged solution over the mesh: $\avg{u_h}_{\min/\max}=\min/\max(\avg{u_h}_{ij}:\,1\leq i\leq N_x, 1\leq j\leq N_y)$.}
    \begin{tabular}{llllllll}
        \hline
         $p$ & $N_x=N_y$ & $\avg{u_h}_{\min}$ & $\avg{u_h}_{\max}$ & $L^{2}$ error & ${\cal O}_2$ & $L^{\infty}$ error & ${\cal O}_\infty$ \\
        \hline
        \multirow{4}{*}{1}
         & 5  & -0.7803 & 0.7803 & $3.260E-01$ & -- & 6.805E-01 & --\\
         & 10 & -0.9313 & 0.9313 & 9.840E-02 & $1.73$ & 2.779E-01 & $1.29$\\
         & 20 & -0.9955 & 0.9955 & 2.431E-02 & $2.02$ & 6.341E-02 & $2.13$\\
         & 40 & -0.9991 & 0.9991 & 6.589E-03 & $1.88$ & 1.789E-02 & $1.83$\\
        \\
        \multirow{4}{*}{2}
         & 5  & -0.8293 & 0.8293 & 3.808E-02 & -- & 1.610E-01 & --\\
         & 10 & -0.9200 & 0.9200 & 4.770E-03 & $3.00$ & 1.348E-02 & $3.58$\\
         & 20 & -0.9917 & 0.9917 & 6.038E-04 & $2.98$ & 2.354E-03 & $2.52$\\
         & 40 & -0.9979 & 0.9979 & 7.377E-05 & $3.03$ & 2.084E-04 & $3.50$\\
        \\
        \multirow{4}{*}{3}
         & 5  & -0.8322 & 0.8322 & 2.511E-03 & -- & 8.746E-03 & --\\
         & 10 & -0.9201 & 0.9201 & 1.569E-04 & $4.00$ & 7.599E-04 & $3.52$\\
         & 20 & -0.9918 & 0.9918 & 1.074E-05 & $3.87$ & 7.432E-05 & $3.35$\\
         & 40 & -0.9979 & 0.9979 & 6.457E-07 & $4.06$ & 4.724E-06 & $3.98$\\
        \\
        \multirow{4}{*}{4}
         & 5  & -0.8323 & 0.8323 & 1.430E-04 & -- & 6.283E-04 & --\\
         & 10 & -0.9201 & 0.9201 & 4.545E-06 & $4.98$ & 1.880E-05 & $5.06$\\
         & 20 & -0.9918 & 0.9918 & 1.431E-07 & $4.99$ & 6.162E-07 & $4.93$\\
         & 40 & -0.9979 & 0.9979 & 4.461E-09 & $5.00$ & 1.950E-08 & $4.98$\\
        \\
        \multirow{4}{*}{5}
         & 5  & -0.8323 & 0.8323 & 7.131E-06 & -- & 3.774E-05 & --\\
         & 10 & -0.9201 & 0.9201 & 1.131E-07 & $5.98$ & 7.490E-07 & $5.65$\\
         & 20 & -0.9918 & 0.9918 & 4.074E-09 & $4.80$ & 6.652E-08 & $3.49$\\
         & 40 & -0.9979 & 0.9979 & 4.789E-11 & $6.41$ & 1.058E-09 & $5.97$\\
        \hline
    \end{tabular}
    \label{tab:err_2D_smooth_steady_pb}
\end{table}

\subsubsection{Steady discontinuous solution}

We now consider a discontinuous steady solution and consider $\partial_x u + \partial_y u = 0$ in $\Omega=[0,1]^2$, inlet conditions $u(x,0)=\cos(\pi x)$, $u(0,y)=-\cos(\pi y)$ and outflow conditions at boundaries $x=1$ and $y=1$. The exact solution is $u(x,y)=\text{sgn}(x-y)\cos(\pi(x-y))$, with $\text{sgn}$ the sign function, and is therefore discontinuous at $x=y$. Results are reported in \cref{tab:err_2D_disc_steady_pb} and \cref{fig:MP_2D_disc_steady_pb}. Here again, the FCT limiter is required to guarantee the maximum principle. In particular, the DGSEM without limiter violates the maximum principle for the cell-averaged solution which prevents the use of the linear scaling limiter.

\begin{table}
    \centering
    \caption{Discontinuous steady-state problem: verification of the maximum principle for problem $\partial_x u + \partial_y u = 0$ with data $u(x,0)=\cos(\pi x)$, $u(0,y)=-\cos(\pi y)$ obtained with $\lambda_x=\lambda_y=5$ without and with the FCT limiter, and with $N_x=N_y=N$. The solution should remain in the interval $[-1,1]$. Minimum and maximum values of the cell-averaged solution and DOFs over the mesh: $\avg{u_h}_{\min/\max}=\min/\max(\avg{u_h}_{ij}:\,1\leq i\leq N_x, 1\leq j\leq N_y)$ and $u_{h_{\min/\max}}=\min/\max(U_{1\leq i,j\leq N}^{0\leq k,l\leq p})$.}
    \begin{tabular}{lllllllllll}
        \hline
         & & \multicolumn{4}{l}{no limiter} && \multicolumn{4}{l}{FCT limiter} \\
         \cline{3-6}  \cline{8-11}
         $N$ & $p$ & $\avg{u_h}_{\min}$ & $\avg{u_h}_{\max}$ & $u_{h_{\min}}$ & $u_{h_{\max}}$ && $\avg{u_h}_{\min}$ & $\avg{u_h}_{\max}$ & $u_{h_{\min}}$ & $u_{h_{\max}}$ \\
        \hline
        \multirow{5}{*}{5}
         & 1 & -0.7518 & 0.7518 & -1.1363 & 1.1363 && -0.7512 & 0.7512 & -1.0000 & 1.0000 \\
         & 2 & -0.7820 & 0.7820 & -1.2634 & 1.2634 && -0.7820 & 0.7820 & -1.0000 & 1.0000 \\
         & 3 & -0.7972 & 0.7972 & -1.3364 & 1.3364 && -0.7827 & 0.7827 & -1.0000 & 1.0000 \\
         & 4 & -0.7832 & 0.7832 & -1.3633 & 1.3633 && -0.7828 & 0.7828 & -1.0000 & 1.0000 \\
         & 5 & -0.7828 & 0.7828 & -1.3764 & 1.3764 && -0.7828 & 0.7828 & -1.0000 & 1.0000 \\
        \\
        \multirow{5}{*}{20}
         & 1 & -1.0121 & 1.0121 & -1.2437 & 1.2437 && -0.9967 & 0.9967 & -1.0000 & 1.0000 \\
         & 2 & -1.0465 & 1.0465 & -1.2843 & 1.2843 && -0.9781 & 0.9781 & -1.0000 & 1.0000 \\
         & 3 & -1.0042 & 1.0042 & -1.3438 & 1.3438 && -0.9857 & 0.9857 & -1.0000 & 1.0000 \\
         & 4 & -0.9937 & 0.9937 & -1.3667 & 1.3667 && -0.9857 & 0.9857 & -1.0000 & 1.0000 \\
         & 5 & -0.9857 & 0.9857 & -1.3781 & 1.3781 && -0.9857 & 0.9857 & -1.0000 & 1.0000 \\
        \hline
    \end{tabular}
    \label{tab:err_2D_disc_steady_pb}
\end{table}

\begin{figure}
    \centering
    \subfigure{\begin{picture}(0,0)\put(-6,75){\rotatebox{90}{$p=1$}} \end{picture}}
    \includegraphics[width=8cm]{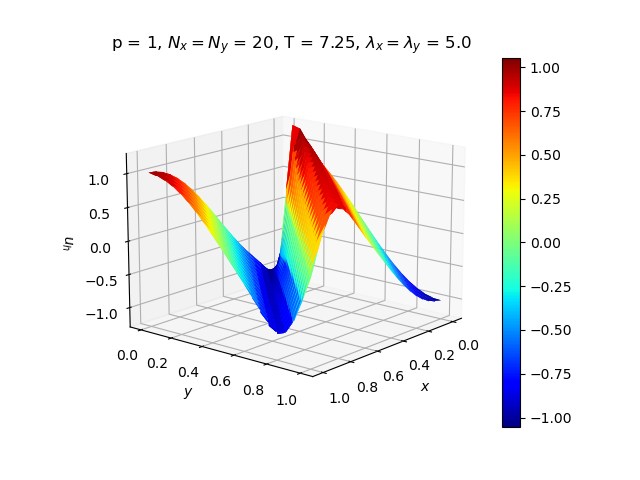}
    \includegraphics[width=8cm]{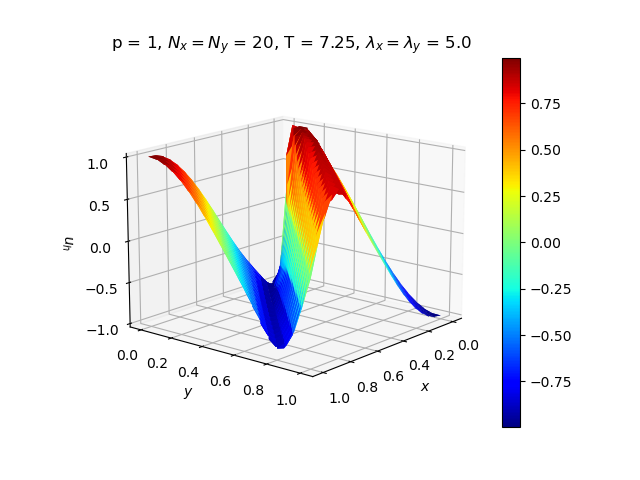}\\
    \subfigure{\begin{picture}(0,0)\put(-6,75){\rotatebox{90}{$p=3$}} \end{picture}}
    \includegraphics[width=8cm]{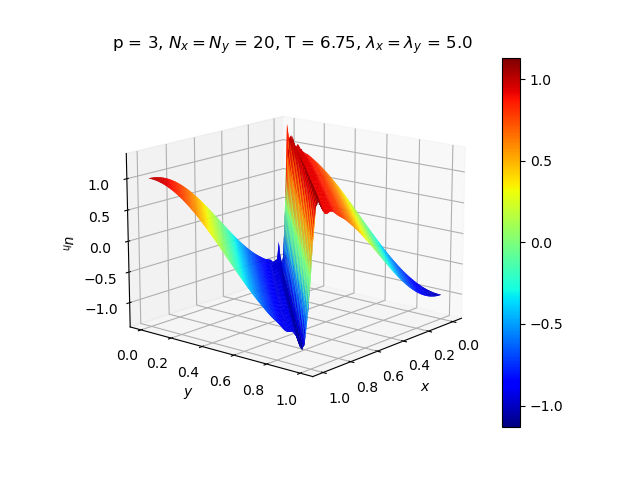}
    \includegraphics[width=8cm]{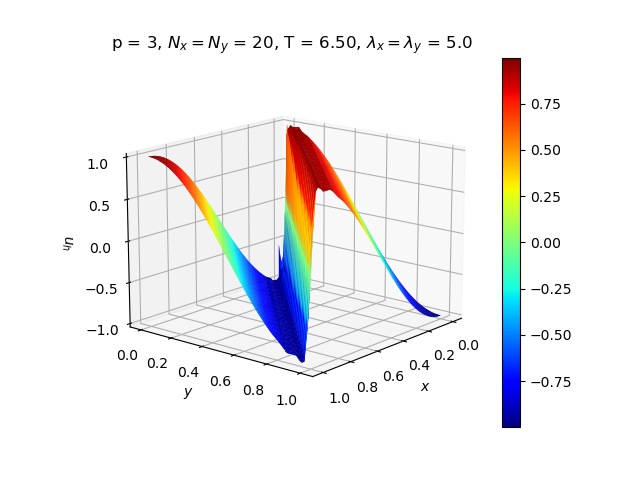}\\
    \subfigure{\begin{picture}(0,0)\put(-6,75){\rotatebox{90}{$p=5$}} \end{picture}}
    \setcounter{subfigure}{0}
    \subfigure[no limiter]{\includegraphics[width=8cm]{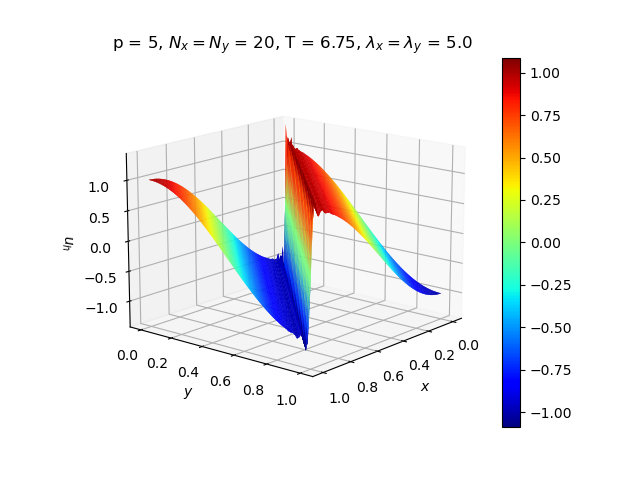}}
    \subfigure[FCT limiter]{\includegraphics[width=8cm]{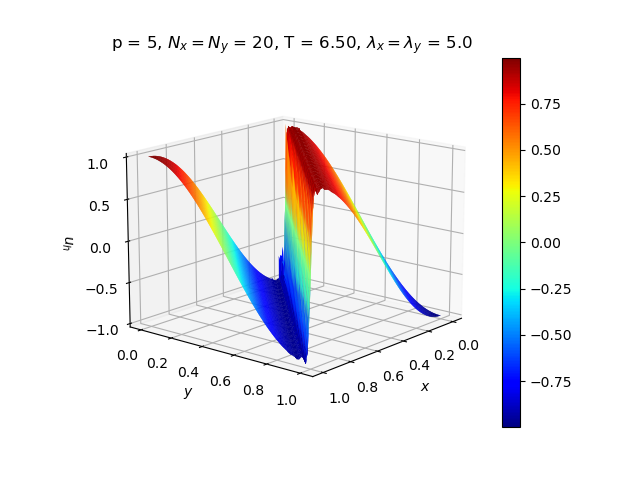}}
    \caption{Discontinuous steady-state problem: DGSEM solutions for a discontinuous steady-state problem $\partial_x u + \partial_y u = 0$ with data $u(x,0)=\cos(\pi x)$, $u(0,y)=-\cos(\pi y)$ obtained with $\lambda_x=\lambda_y=5$, $N_x=N_y=20$, without and with the FCT limiter. The solution is plotted at quadrature points and $T$ refers to the pseudo time required to converge the solution, i.e., $\|u_h^{n+1}-u_h^{n}\|_2 \leq 10^{-14}$.}
    \label{fig:MP_2D_disc_steady_pb}
\end{figure}

\subsubsection{Linear advection-reaction with source}

We finally consider a linear advection-reaction problem with a geometric source term:

\begin{equation}\label{eq:adv_reaction_source_pb_2D}
 \partial_xu + \partial_yu + \beta u = s(x,y) \quad \text{in } \Omega, \quad u(x,0)=u_0(x) \quad \text{in }\Omega.
\end{equation}

\noindent with $\beta\geq0$, $s(\cdot)\geq0$, and nonnegative inflow boundary data. We adapt the problem from \cref{sec:adv_reac_pb_1D} and \cite{Xu_Shu_cons_PP_2023} to two space dimensions and set $\beta=6000$ and a source term $s(x,y)$ such that the solution is $u(x,y)=\tfrac{1}{9}\cos(3\pi x)^4cos(3\pi y)^4$ (see \cref{fig2D_smooth_adv_reaction_pb}). Inflow boundary conditions, $u(x,0)=\tfrac{1}{9}\cos(3\pi x)^4$ and $u(0,y)=\tfrac{1}{9}\cos(3\pi y)^4$, are applied to $x=0$ and $y=0$, while outflow conditions are imposed at $x=1$ and $y=1$.

\begin{figure}
    \centering
    \includegraphics[width=8cm]{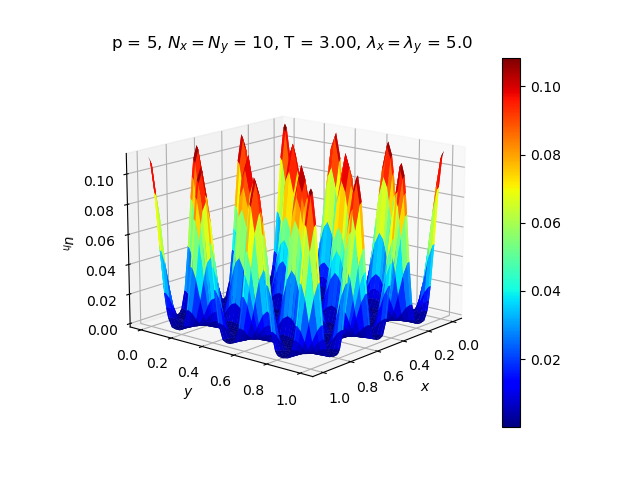}
    \caption{2D advection-reaction with source: steady-state DGSEM solution for problem \cref{eq:adv_reaction_source_pb_2D} with $p=5$, $N_x=N_y=10$ and the FCT limiter. The solution is plotted at quadrature points and $T$ refers to the pseudo time required to converge the solution, i.e., $\|u_h^{n+1}-u_h^{n}\|_2 \leq 10^{-14}$.}
    \label{fig2D_smooth_adv_reaction_pb}
\end{figure}

\Cref{tab:err_2D_smooth_adv_reaction_pb} displays the error levels together with minimum and maximum values of the DOFs obtained without or with the FCT limiter, different approximation orders and different mesh refinements. We again use $\lambda_x=\lambda_y=5$, start from $u_h^{(0)}\equiv0$ and march in time until $\|u_h^{n+1}-u_h^{n}\|_2 \leq 10^{-14}$. As in the 1D case in \cref{sec:adv_reac_pb_1D}, we observe a suboptimal convergence order of $p$ as the mesh is refined due to the insufficient accuracy of the Gauss-Lobatto quadrature rules for integrating the highly nonlinear geometric source terms. Using the limiter or not leads to comparable error levels, while the FCT limiter is necessary for the solution to satisfy the maximum principle.

\begin{table}
    \centering
    \caption{2D advection-reaction with source: $L^{k\in\{2,\infty\}}$ error levels $\|u_h-u\|_{L^k(\Omega_h)}$ and associated orders of convergence ${\cal O}_k$ for problem \cref{eq:adv_reaction_source_pb_2D} with data $u(x,0)=\tfrac{1}{9}\cos(3\pi x)^4$, $u(0,y)=\tfrac{1}{9}\cos(3\pi y)^4$ obtained with $\lambda_x=\lambda_y=5$ when refining the mesh with $N_x=N_y=N$. The solution should remain in the interval $[0,\tfrac{1}{9}]$. Minimum and maximum values of the DOFs: $u_{h_{\min/\max}}=\min/\max(U_{1\leq i,j\leq N}^{0\leq k,l\leq p})$.}
\rotatebox{90}{
    \begin{tabular}{lllllllllllllll}
        \hline
         & & \multicolumn{6}{l}{no limiter} && \multicolumn{6}{l}{with FCT limiter} \\
         \cline{3-8} \cline{10-15}
         $p$ & $N$ & $u_{h_{\min}}$ & $u_{h_{\max}}$ & $L^{2}$ error & ${\cal O}_2$ & $L^{\infty}$ error & ${\cal O}_\infty$ && $u_{h_{\min}}$ & $u_{h_{\max}}$ & $L^{2}$ error & ${\cal O}_2$ & $L^{\infty}$ error & ${\cal O}_\infty$ \\
        \hline
        \multirow{4}{*}{1}
         & 5  &  7.395e-06 & 0.1113 & 1.210E-04 & -- & 2.953E-04 & -- && 7.395e-06 & 1.1111 & 1.212E-04 & -- & 2.953E-04 & --\\
         & 10 & -7.913e-05 & 0.1114 & 4.220E-04 & $-1.80$ & 4.220E-04 & $-0.51$ && 0.0000 & 1.1111 & 4.220E-04 & $-1.80$ & 4.220E-04 & $-0.52$\\
         & 20 & -1.568e-05 & 0.1114 & 5.693E-05 & $2.89$ & 2.933E-04 & $0.52$ && 0.0000 & 1.1111 & 5.634E-05 & $2.91$ & 2.933E-04 & $0.52$\\
         & 40 & -2.206e-06 & 0.1113 & 2.958E-05 & $0.95$ & 1.581E-04 & $0.89$ && 0.0000 & 1.1111 & 1.581E-04 & $0.94$ & 1.581E-04 & $0.89$\\
        \\
        \multirow{4}{*}{2}
         & 5  & -9.775e-08 & 0.1116 & 8.671E-05 & -- & 4.666E-04 & -- && 0.0000 & 1.1111 & 8.539E-05 & -- & 4.666E-04 & --\\
         & 10 & -4.737e-05 & 0.1113 & 3.532E-05 & $1.30$ & 2.196E-04 & $1.09$ && 0.0000 & 1.1111 & 4.664E-05 & $0.87$ & 4.471E-04 & $0.06$\\
         & 20 & -1.107e-05 & 0.1112 & 1.014E-05 & $1.80$ & 5.996E-05 & $1.87$ && 0.0000 & 1.1111 & 1.132E-05 & $2.04$ & 9.358E-05 & $2.26$\\
         & 40 & -1.557e-06 & 0.1111 & 2.630E-06 & $1.95$ & 1.475E-05 & $2.02$ && 0.0000 & 1.1111 & 2.671E-06 & $2.08$ & 1.759E-05 & $2.41$\\
        \\
        \multirow{4}{*}{3}
         & 5  &  2.192e-07 & 0.1114 & 5.032E-05 & -- & 2.604E-04 & -- && 2.192e-07 & 1.1111 & 5.215E-05 & -- & 2.596E-04 & --\\
         & 10 & -8.469e-06 & 0.1111 & 1.029E-05 & $2.29$ & 6.681E-05 & $1.96$ && 0.0000 & 1.1111 & 1.217E-05 & $2.10$ & 1.248E-04 & $1.06$\\
         & 20 & -2.173e-06 & 0.1111 & 1.440E-06 & $2.84$ & 1.091E-05 & $2.61$ && 0.0000 & 1.1111 & 1.722E-06 & $2.82$ & 1.443E-05 & $1.11$\\
         & 40 & -3.801e-07 & 0.1111 & 1.817E-07 & $2.99$ & 1.399E-06 & $2.96$ && 0.0000 & 1.1111 & 2.023E-07 & $3.09$ & 1.693E-06 & $3.09$\\
        \\
        \multirow{4}{*}{4}
         & 5  & -2.96521e-05 & 0.1112 & 2.500E-05 & -- & 9.753E-05 & -- && 0.0000 & 1.1111 & 3.727E-05 & -- & 3.127E-04 & --\\
         & 10 & -7.80861e-06 & 0.1111 & 2.375E-06 & $3.40$ & 1.906E-05 & $2.36$ && 0.0000 & 1.1111 & 4.595E-06 & $3.02$ & 6.962E-05 & $2.17$\\
         & 20 & -3.91095e-07 & 0.1111 & 1.648E-07 & $3.85$ & 1.276E-06 & $3.90$ && 0.0000 & 1.1111 & 1.999E-07 & $4.52$ & 2.491E-06 & $4.80$\\
         & 40 & -1.30899e-08 & 0.1111 & 1.060E-08 & $3.96$ & 8.080E-08 & $3.98$ && 0.0000 & 1.1111 & 1.086E-08 & $4.20$ & 1.009E-07 & $4.63$\\
        \\
        \multirow{4}{*}{5}
         & 5  & -1.13065e-06 & 0.1112 & 1.000E-05 & -- & 7.942E-05 & -- && 0.0000 & 1.1111 & 1.071E-05 & -- & 9.570E-05 & --\\
         & 10 & -5.10247e-07 & 0.1111 & 4.557E-07 & $4.45$ & 3.288E-06 & $4.59$ && 0.0000 & 1.1111 & 5.795E-07 & $4.20$ & 6.230E-06 & $3.94$\\
         & 20 & -3.81951e-08 & 0.1111 & 1.538E-08 & $4.89$ & 1.301E-07 & $4.66$ && 0.0000 & 1.1111 & 2.017E-08 & $4.85$ & 1.740E-07 & $5.16$\\
         & 40 & -1.49843e-09 & 0.1111 & 4.767E-10 & $5.01$ & 3.933E-09 & $5.05$ && 0.0000 & 1.1111 & 5.647E-10 & $5.16$ & 5.807E-09 & $4.91$\\
        \hline
    \end{tabular}
}
    \label{tab:err_2D_smooth_adv_reaction_pb}
\end{table}

%
%
\section{Concluding remarks}\label{sec:conclusions}

This work provides an analysis of the high-order DGSEM discretization with implicit backward-Euler time stepping for the approximation of hyperbolic linear scalar conservation equations in multiple space dimensions. Two main aspects are considered here. We first investigate the maximum principle preservation of the scheme. For the 1D scheme, we prove that the DGSEM preserves the maximum principle of the cell-averaged solution providing that the CFL number is larger than a lower bound. This result allows to use linear scaling limiters \cite{zhang_shu_10a,zhang2012maximum} to impose all the DOFs to satisfy the maximum principle. This property however does not hold in general in multiple space dimensions and we propose to use the FCT limiter \cite{Guermond_IDP_NS_2021,BORIS_Book_FCT_73,zalesak1979fully} to enforce the maximum principle on the cell-averaged solution. The FCT limiter combines the DGSEM scheme with a low-order maximum-principle preserving scheme derived by adding graph viscosity to the DGSEM scheme. The linear scaling limiter is then used to impose the maximum principle to all the DOFs. Numerical experiments in one and two space dimensions are provided to illustrate the conclusions of the present analyses. Then, we investigate the inversion of the linear systems resulting from the time implicit discretization at each time step. We prove that the diagonal blocks are invertible and provide efficient algorithms for their inversion. Future work will concern the extension of this analysis to nonlinear hyperbolic scalar equations and systems of conservation laws on  unstructured grids. Another direction of research may consist in using the fast inversion algorithms introduced in this work for solving preconditionning steps based on tensor product of 1D building blocks in block-preconditionned iterative solvers.


%
%
\appendix

%
%
\section{Multidimensional discrete difference matrix}\label{app:2D_difference_matrix}

The 2D discrete difference matrix reads
\begin{equation*}
{\bf D}_{2d}^\top = \lambda_{x_i}{\bf I}\otimes{\bf D}^\top + \lambda_{y_j}{\bf D}^\top\otimes{\bf I},
\end{equation*}

\noindent and is also nilpotent:
\begin{equation*}
 {\bf D}_{2d}^{2p+1} = 0.
\end{equation*}

Indeed, using properties \cref{eq:prop_Kronecker_prod} we have:
\begin{align*}
 {\bf D}_{2d}^{2p+1} & = (\lambda_{x_i}{\bf I}\otimes{\bf D} + \lambda_{y_j}{\bf D}\otimes{\bf I})^{2p+1} \\
 & = \sum \limits_{k=0}^{2p+1} \binom{2p+1}{k} \lambda_{x_i}^k ({\bf I}\otimes{\bf D})^k \lambda_{y_j}^{2p+1-k}({\bf D}\otimes{\bf I})^{2p+1-k} \\
 & = \sum \limits_{k=0}^{2p+1} \binom{2p+1}{k} \lambda_{x_i}^k \lambda_{y_j}^{2p+1-k}({\bf I}\otimes{\bf D}^k) ({\bf D}^{2p+1-k} \otimes{\bf I}) \\
 & = \sum \limits_{k=0}^{2p+1} \binom{2p+1}{k} \lambda_{x_i}^k \lambda_{y_j}^{2p+1-k} {\bf D}^{2p+1-k} \otimes {\bf D}^k.
\end{align*}

For all $0\leq k \leq 2p+1$, either $2p+1-k$, or $k$ is greater or equal to $p+1$. Hence, nilpotency of ${\bf D}$ \cref{eq:nilpotent_D_matrix} gives the desired result. The matrix is therefore easily invertible:
\begin{equation*}
 \big({\bf I}-{\bf D}_{2d}\big)^{-1} = \sum_{k=0}^{2p} {\bf D}_{2d}^k = \sum_{k=0}^{2p}\sum_{l=0}^k \binom{k}{l}\lambda_{x_i}^{l} \lambda_{y_j}^{k-l} {\bf D}^{l} \otimes{\bf D}^{k-l}.
\end{equation*}

%
%
\section{Inversion of diagonal blocks}\label{app:fast_inversion_diag_blocks}


The linear systems associated to the DGSEM discretization of problem \cref{eq:hyp_cons_laws} with an implicit time stepping have a sparse pattern with dense diagonal blocks of large size. 
One can take advantage of this structure in order to significantly speed up the resolution of such systems with respect to standard inversion algorithms. To support these claims, we implemented the proposed methods and compared them with standard ones. The code is freely available online \cite{FastMatrixInverse_github}. \texttt{python} being our language of choice, we used as reference the linear algebra tools of the popular computational library \texttt{numpy} \cite{numpy}. In what follows, we recall the main equations and operations involved in the inversion of the DGSEM systems.

\subsection{Reminder of main definitions and equations}
At the very core of the DGSEM discretization there is the derivative matrix \eqref{eq:nodalGL_diff_matrix}:
\begin{equation}\label{eq:app_nodalGL_diff_matrix}
 D_{kl} = \ell_l'(\xi_k), \quad 0\leq k,l \leq p,
\end{equation}
where $\ell_l(x)$ are the 1D Lagrange interpolation polynomials \eqref{eq:def_Lag_polynom} and $\xi_k$ the nodes of the Gauss-Lobatto quadrature rule.

Once the time discretization has been taken into account as well, following \cref{sec:DGSEM_1D_fully_discr}, one recover a sparse linear system whose diagonal blocks read ${\bf L}_{1d}{\bf M}$, where, given the quadrature weights $(\omega_k)_{0\leq k\leq p}$,
\begin{equation}
  {\bf M}\coloneqq\tfrac{1}{2}\diag(\omega_0,\dots,\omega_p)
\end{equation}
denotes the 1D mass matrix and ${\bf L}_{1d}$ is given by \eqref{eq:1D_diag_blocks}:
\begin{subequations}
  \begin{align}
    {\bf L}_{1d} \coloneqq{}& {\bf I}-2\lambda\mathbfcal{L},\label{eq:app_1D_diag_block_L}\\
    \mathbfcal{L} \coloneqq{}& {\bf D}^\top-\frac{1}{\omega_p}{\bf e}_p{\bf e}_p^\top.\label{eq:app_1D_diag_block_Lcal}
  \end{align}
\end{subequations}

As discussed in \cref{sec:1D_building_blocks}, the matrix $\mathbfcal{L}$ can be diagonalized \eqref{eq:calL_from_psib}:
\begin{equation}\label{eq:app_calL_eigen_pb}
  \mathbfcal{L} = {\bf R}\Psib{\bf R}^{-1},
\end{equation}
and an explicit formula for the eigenpairs is available \eqref{eq:eigenmodes_matA}:
\begin{subequations}\label{eq:app_calL_diag}
  \begin{align}
    \omega_p\psi^{p+1}+\sum_{l=0}^p\psi^{p-l}D_{pp}^{(l)}={}&0,\\
    r_k ={}& -\frac{1}{\omega_p}\sum_{l=0}^p\psi^{-l-1}D_{pk}^{(l)} \quad \forall 0\leq k\leq p-1, \quad r_p=1.
  \end{align}
\end{subequations}

The system coming from the high-order 2D discretization has diagonal blocks which are easily inverted from the quantities discussed just above. Indeed, system
\begin{equation}\label{eq:app_2D}
  {\bf L}_{2d}({\bf M}\otimes{\bf M}){\bf x}={\bf b},
\end{equation}
with (see \eqref{eq:L2d_mat_def}):
\begin{subequations}\label{eq:app_L2d_mat_def}
\begin{align}
 {\bf L}_{2d} \coloneqq& \frac{\lambda_{x_i}}{\lambda}{\bf I}\otimes{\bf L}_{1d} + \frac{\lambda_{y_j}}{\lambda}{\bf L}_{1d}\otimes{\bf I} = ({\bf R}\otimes{\bf R})\Psib_{2d}({\bf R}\otimes{\bf R})^{-1}, \\
 \Psib_{2d} =& \frac{\lambda_{x_i}}{\lambda}{\bf I}\otimes\Psib_\lambda + \frac{\lambda_{y_j}}{\lambda}\Psib_\lambda\otimes{\bf I}, 
\end{align}
\end{subequations}
can be solved following \eqref{eq:inverse_L2d}:
\begin{subequations}\label{eq:app_2D_inv}
  \begin{align}
   ({\bf M}\otimes{\bf M})^{-1}{\bf L}_{2d}^{-1} &= \left(({\bf M}^{-1}{\bf R})\otimes({\bf M}^{-1}{\bf R})\right)\Psib_{2d}^{-1}\left({\bf R}\otimes{\bf R}\right)^{-1},\\
   \Psib_{2d}^{-1} &= \diag\left(\frac{1}{1-2(\lambda_{x_i}\psi_k+\lambda_{y_j}\psi_l)}:\;1\leq n_{kl}=1+k+l(p+1)\leq N_p\right).
  \end{align}
\end{subequations}

Whenever the graph viscosity is considered in the 2D problem, the diagonal blocks are modified and their closed form reads (see \cref{eq:2D_diag_block_with_graph_visc,eq:matL2d0_U_V}):
\begin{subequations}
  \begin{align}
    {\bf L}_{2d}^v ={}& {\bf L}_{2d}^0 - {\bf U}_v{\bf V}_v^\top,\\
    {\bf L}_{2d}^0 ={}& {\bf L}_{2d} + 2d_{ij}\lambda{\bf I}\otimes{\bf I} = ({\bf R}\otimes{\bf R})\big(\Psib_{2d}+2 d_{ij}\lambda{\bf I}\otimes {\bf I}\big)({\bf R}\otimes{\bf R})^{-1},\\
    {\bf U}_v ={}& 2d_{ij}\big(\lambda_{x_i}{\bf I}\otimes\omegab,\lambda_{y_j}\omegab\otimes{\bf I}\big),\\
    {\bf V}_v ={}& \big({\bf I}\otimes{\bf 1},{\bf 1}\otimes{\bf I}\big).
  \end{align}
\end{subequations}

Hinging on this and applying the Woodbury identity, an efficient way to solve
\begin{equation}\label{eq:app_2D_v}
  {\bf L}_{2d}^v({\bf M}\otimes{\bf M}){\bf x}={\bf b}
\end{equation}
has been proposed in \cref{algo:invert_L2d^v}:
\begin{subequations}\label{eq:app_2D_inv_v}
  \begin{enumerate}
    \item Solve ${\bf L}_{2d}^0{\bf y}={\bf b}$, which gives:
      \begin{equation}
        {\bf y} = ({\bf R}\otimes{\bf R}) \diag\left( \frac{1}{1+2\lambda d_{ij}-2(\lambda_{x_i}\psi_k+\lambda_{y_j}\psi_l)}: \; 1\leq n_{kl}=1+k+l(p+1)\leq N_p \right)({\bf R}^{-1}\otimes{\bf R}^{-1}){\bf b};
      \end{equation}
    \item Solve ${\bf L}_{2d}^0{\bf Z}={\bf U}_v$, which gives:
      \begin{equation}
       {\bf Z} = 2d_{ij}({\bf R}\otimes{\bf R}) \diag\left( \frac{1}{1+2\lambda d_{ij}-2(\lambda_{x_i}\psi_k+\lambda_{y_j}\psi_l)}: 1\leq n_{kl}\leq N_p \right) \big(\lambda_{x_i}{\bf R}^{-1}\otimes({\bf R}^{-1}\omegab),\lambda_{y_j}({\bf R}^{-1}\omegab)\otimes{\bf R}^{-1}\big);
      \end{equation}
    \item Solve
      \begin{equation}
        ({\bf I}_{2p+2}-{\bf V}_v^\top{\bf Z}){\bf z}={\bf V}_v^\top{\bf y}; 
      \end{equation}
    \item Finally, set
      \begin{equation}
        {\bf x} = ({\bf M}^{-1}\otimes{\bf M}^{-1})({\bf y} + {\bf Z}{\bf z}).
      \end{equation}
  \end{enumerate}
\end{subequations}

It is important to notice that matrices $\mathbfcal{L}$, ${\bf R}$, ${\bf M}$, $\Psib$, $\Psib_{2d}$, and related matrices (e.g., ${\bf R}^{-1}$, ${\bf M}^{-1}{\bf R}$,\ldots) depend only on the approximation order of the scheme $p$, not on the $\lambda_{x_i}$ and $\lambda_{y_j}$, so that they may be computed only once at the beginning of the computation.

\subsection{Remarks about the GitHub repository}
Repository \cite{FastMatrixInverse_github} contains a \texttt{python} module, \verb|fast_DGSEM_block_inversion.py| which implements \crefrange{eq:app_nodalGL_diff_matrix}{eq:app_2D_inv_v} and, more importantly, assess \eqref{eq:app_2D_inv} (respectively, \eqref{eq:app_2D_inv_v} and \eqref{eq:app_calL_diag}) by comparing it in terms of exactness to the result of \eqref{eq:app_2D} (resp., \eqref{eq:app_2D_v} and \eqref{eq:app_calL_eigen_pb}) obtained with reference algebraic tools (mainly, \texttt{numpy.linalg.inv}).

Several other optimizations, especially regarding operations involving diagonal matrices have been considered, and, for the sake of fairness, used both in the proposed and standard ways of solving \cref{eq:app_calL_eigen_pb,eq:app_2D,eq:app_2D_v}.

The performances of such resolution methods can be computed and (visually) analyzed thanks to the notebook \verb|test_fast_dgsem.ipynb|. Indeed, we give in \cref{fig:app_perf} the results obtained with this notebook on a personal machine with 8 Intel Xeon(R) W-2223 CPUs and 16Gb RAM. Performance analysis has been evaluated thanks to built-in \texttt{python} module \texttt{timeit}; statistical data has been computed over 20 runs, each calling the procedure under evaluation more than 1000 times. Even though such performance measures might vary from one machine to the other, and, also, from one run to the other, we can reliably say that inversion strategies \eqref{eq:app_2D_inv} and \eqref{eq:app_2D_inv_v} show consistent and often significant performance gains with respect to their dense counterparts. Admittedly, the gains are less noticeable for \eqref{eq:app_2D_v} whenever high orders are used, see right part of \cref{fig:app_perf} (indeed, simple computations show that \eqref{eq:app_2D_inv_v}, more precisely, the matrix products involved there, and the dense resolution of \eqref{eq:app_2D_v} have similar algorithmic complexity). Nonetheless, since the graph viscosity is not always necessary (see \cref{sec:FCT_limiter}), the overall performances of the two-staged FCT limiter highly benefits from the proposed inversion strategies. All in all, to tackle the problem with graph viscosity \eqref{eq:app_2D_v}, we advise to prefer procedure \eqref{eq:app_2D_inv_v} over a dense solve whenever the polynomial order is moderate.

\begin{figure}
    \centering
    \subfigure[Resolution of \eqref{eq:app_2D}]{
      \includegraphics[height=6cm]{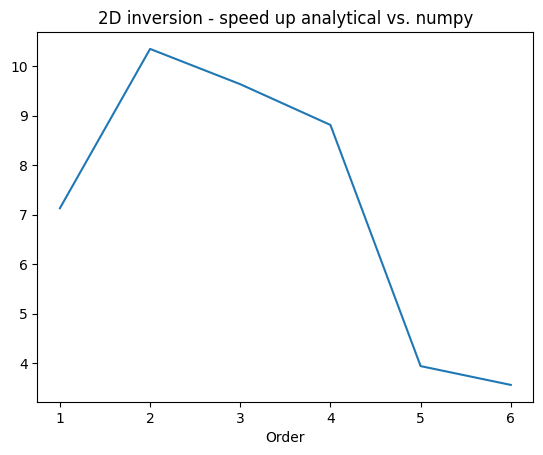}
    }\hfill
    \subfigure[Resolution of \eqref{eq:app_2D_v}]{
      \includegraphics[height=6cm]{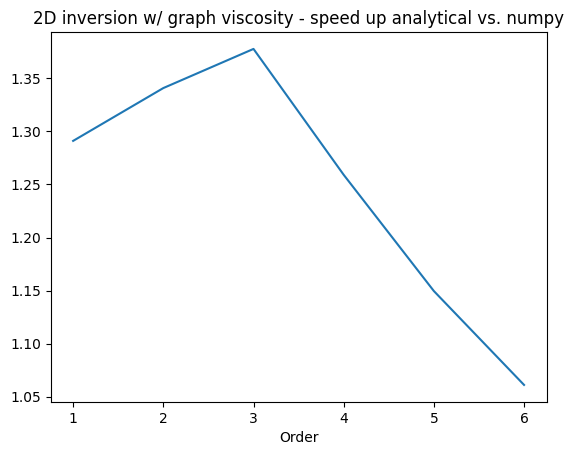}
    }
    \caption{Left: performance speed-up over polynomial degree for solving \eqref{eq:app_2D} with proposed procedure \eqref{eq:app_2D_inv} with respect to standard algebraic tools (\texttt{numpy}). Right: similar to above, but for \eqref{eq:app_2D_v} and  \eqref{eq:app_2D_inv_v}.}
    \label{fig:app_perf}
\end{figure}

Finally, the testing framework \verb|test_fast_dgsem.py| allows one to check in a compact way the exactness of proposed formulae \cref{eq:app_calL_diag,eq:app_2D_inv,eq:app_2D_inv_v} and other matrix-related optimizations for several settings at once.

%
%
\section{The 3D DGSEM scheme}\label{app:3D_DGSEM_scheme}

We here give details and the main properties of the time implicit DGSEM scheme for the approximation of \cref{eq:hyp_cons_laws} with flux ${\bf f}({\bf u})=u(c_x,c_y,c_z)^\top$ and nonnegative $c_x,c_y$, and $c_z$. We consider a Cartesian mesh with elements with measure $|\kappa_{ijk}|=\Delta x_i\times \Delta y_j\times \Delta z_k$ and set $\lambda_{x_i}=\tfrac{c_x\Delta t}{\Delta x_i}$, $\lambda_{y_j}=\tfrac{c_y\Delta t}{\Delta y_j}$, $\lambda_{z_k}=\tfrac{c_z\Delta t}{\Delta z_k}$.

\subsection{High-order and low-order schemes}

Using a vector storage of the DOFs as $({\bf U}_{ijk})_{n_{lmr}}=U_{ijk}^{lmr}$ with $1\leq n_{lmr}\coloneqq1+l+m(p+1)+r(p+1)^2\leq N_p$ and $N_p=(p+1)^3$, the discrete scheme with graph viscosity under vector form reads

\begin{equation}\label{eq:3D_discr_DGSEM_lin_vector_form}
\begin{aligned}
 ({\bf M}\otimes{\bf M}\otimes{\bf M})({\bf U}_{ijk}^{n+1} -{\bf U}_{ijk}^{n}) &- \lambda_{x_i}\Big({\bf M}\otimes{\bf M}\otimes(2{\bf D}^\top{\bf M}-{\bf e}_{p}{\bf e}_{p}^\top)\Big){\bf U}_{ijk}^{n+1} - \lambda_{x_i}\big({\bf M}\otimes{\bf M}\otimes{\bf e}_{0}{\bf e}_{p}^\top\big){\bf U}_{(i-1)jk}^{n+1} \\
  &- \lambda_{y_j}\Big({\bf M}\otimes(2{\bf D}^\top{\bf M}-{\bf e}_{p}{\bf e}_{p}^\top)\otimes{\bf M}\Big){\bf U}_{ijk}^{n+1} - \lambda_{y_j}\big({\bf M}\otimes{\bf e}_{0}{\bf e}_{p}^\top\otimes{\bf M}\big){\bf U}_{i(j-1)k}^{n+1} \\
  &- \lambda_{z_k}\Big((2{\bf D}^\top{\bf M}-{\bf e}_{p}{\bf e}_{p}^\top)\otimes{\bf M}\otimes{\bf M}\Big){\bf U}_{ijk}^{n+1} - \lambda_{z_k}\big({\bf e}_{0}{\bf e}_{p}^\top\otimes{\bf M}\otimes{\bf M}\big){\bf U}_{ij(k-1)}^{n+1} + {\bf V}_{ijk}^{(n+1)} = 0,
\end{aligned}
\end{equation}

\noindent where ${\bf M}=\tfrac{1}{2}\text{diag}(\omega_0,\dots,\omega_p)$ denotes the 1D mass matrix, $({\bf e}_k)_{0\leq k\leq p}$ is the canonical basis of $\mathbb{R}^{p+1}$. The graph viscosity is defined by

\begin{align}\label{eq:3D_graph_viscosity}
 {\bf V}_{ijk}^{(n+1)} &= 2d_{ijk}\Big(\lambda_{x_i}{\bf M}\otimes{\bf M}\otimes\big({\bf M}-\omegab{\bf 1}^\top{\bf M}\big)+\lambda_{y_j}{\bf M}\otimes\big({\bf M}-\omegab{\bf 1}^\top{\bf M}\big)\otimes{\bf M}+\lambda_{z_k}\big({\bf M}-\omegab{\bf 1}^\top{\bf M}\big)\otimes{\bf M}\otimes{\bf M}\Big){\bf U}_{ij}^{(n+1)} 
\end{align}

\noindent with $d_{ijk}\geq0$, $\omegab=\tfrac{1}{2}(\omega_0,\dots,\omega_p)^\top$ and ${\bf 1}=(1,\dots,1)^\top\in\mathbb{R}^{p+1}$.  Scheme \cref{eq:3D_discr_DGSEM_lin_vector_form} with $d_{ijk}=0$ (resp.,  $d_{ijk}>0$) denotes the high-order (resp., low-order) scheme. The linear system \cref{eq:3D_discr_DGSEM_lin_vector_form} is maximum principle preserving under the same condition \cref{eq:cond_on_dij_for_M_matrix} as in 2D: $d_{ijk}\geq 2\max_{0\leq k \neq m \leq p}\big(-\tfrac{D_{mk}}{\omega_k}\big)$.

The 3D discrete difference matrix  reads

\begin{equation*}
{\bf D}_{3d}^\top = \lambda_{x_i}{\bf I}\otimes{\bf I}\otimes{\bf D}^\top + \lambda_{y_j}{\bf I}\otimes{\bf D}^\top\otimes{\bf I} + \lambda_{z_k}{\bf D}^\top\otimes{\bf I}\otimes{\bf I},
\end{equation*}

\noindent and is also nilpotent: ${\bf D}_{3d}^{3p+1} = 0$.

\subsection{FCT limiter}

By $u_{h,HO}^{(n+1)}$ we denote the high-order solution to \cref{eq:3D_discr_DGSEM_lin_vector_form} with $d_{ijk}=0$ and by $u_{h,LO}^{(n+1)}$ we denote the high-order solution to \cref{eq:3D_discr_DGSEM_lin_vector_form} with $d_{ijk} = 2\max_{0\leq k \neq m \leq p}\big(-\tfrac{D_{mk}}{\omega_k}\big)$. Applying the FCT limiter introduced in \cref{sec:FCT_limiter}, the limited DOFs are evaluated explicitly from $u_{h,LO}^{(n+1)}$ and  $u_{h,HO}^{(n+1)}$ through

\begin{align*}
        U_{ijk}^{lmr,n+1} - U_{ijk,HO}^{lmr,n+1} &= \delta_{lp} \frac{2\lambda_{x_i}}{\omega_p}\big(1-l_{ijk}^{(i+1)jk}\big)\Big(U_{ijk,HO}^{pmr,n+1}-U_{ijk,LO}^{pmr,n+1}\Big)  - \delta_{l0} \frac{2\lambda_{x_i}}{\omega_0}\big(1-l_{ijk}^{(i-1)jk}\big)\Big(U_{(i-1)jk,HO}^{pmr,n+1}-U_{(i-1)jk,LO}^{pmr,n+1}\Big) \\
    & + \delta_{mp} \frac{2\lambda_{y_j}}{\omega_p}\big(1-l_{ijk}^{i(j+1)k}\big)\Big(U_{ijk,HO}^{lpr,n+1}-U_{ijk,LO}^{lpr,n+1}\Big) - \delta_{m0} \frac{2\lambda_{y_j}}{\omega_0}\big(1-l_{ijk}^{i(j-1)k}\big)\Big(U_{i(j-1)k,HO}^{lpr,n+1}-U_{i(j-1)k,LO}^{lpr,n+1}\Big) \\
    & + \delta_{rp} \frac{2\lambda_{z_k}}{\omega_p}\big(1-l_{ijk}^{ij(k+1)}\big)\Big(U_{ijk,HO}^{lmp,n+1}-U_{ijk,LO}^{lmp,n+1}\Big) - \delta_{r0} \frac{2\lambda_{z_k}}{\omega_0}\big(1-l_{ijk}^{ij(k-1)}\big)\Big(U_{ij(k-1),HO}^{lmp,n+1}-U_{ij(k-1),LO}^{lmp,n+1}\Big),
\end{align*}

\noindent where

\begin{equation*}
 \begin{array}{rcl}
    l_{ijk}^{lmr} &=& \left\{
    \begin{array}{ll}
        \min(l_{ijk}^{-},l_{lmr}^{+}) & \mbox{if } A_{ijk}^{lmr} < 0, \\
        \min(l_{lmr}^{-},l_{ijk}^{+}) & \mbox{else,}
    \end{array}\right. \\
 l_{ijk}^{\pm} &=& \min\Big(1,\frac{Q_{ijk}^{\pm}}{P_{ijk}^{\pm}}\Big),
 \end{array} \quad
 \begin{array}{ll}
   A_{ijk}^{(i+1)jk} &= -A_{(i+1)jk}^{ijk} = \displaystyle\sum_{0\leq m,r\leq p}\frac{\omega_m\omega_r}{4}\big(U_{ijk,HO}^{pmr,n+1}-U_{ijk,LO}^{pmr,n+1}\big), \\
    A_{ijk}^{i(j+1)k} &= -A_{i(j+1)k}^{ijk} = \displaystyle\sum_{0\leq l,r\leq p}\frac{\omega_l\omega_r}{4}\big(U_{ijk,HO}^{lpr,n+1}-U_{ijk,LO}^{lpr,n+1}\big), \\
    A_{ijk}^{ij(k+1)} &= -A_{ij(k+1)}^{ijk} = \displaystyle\sum_{0\leq m,l\leq p}\frac{\omega_m\omega_l}{4}\big(U_{ijk,HO}^{lmp,n+1}-U_{ijk,LO}^{lmp,n+1}\big),
 \end{array}
\end{equation*}


\begin{equation*}
P_{ijk}^{-} = \sum_{(l,m,r)\in \mathcal{S}(i,j,k)} \min\big(A_{ijk}^{lmr},0\big),  \quad Q_{ijk}^{-} = m - \avg{u_{LO}^{(n+1)}}_{ijk}, \quad
P_{ijk}^{+} = \sum_{(l,m,r)\in \mathcal{S}(i,j,k)} \max\big(A_{ijk}^{lmr},0\big), \quad Q_{ijk}^{+} = M - \avg{u_{LO}^{(n+1)}}_{ijk} \geq 0,
\end{equation*}

\noindent and  $\mathcal{S}(i,j,k) = \{(i\pm1,j,k);(i,j\pm1,k);(i,j,k\pm1)\}$. The limited solution is bounded by the lower and upper bounds in \cref{eq:PDE_max_principle}: $m\leq \avg{{u}_{h}^{(n+1)}}_{ijk}:=\tfrac{1}{8}\sum_{lmr}\omega_l\omega_m\omega_rU_{ijk}^{lmr,n+1}\leq M$ and the limiter keeps conservation of the scheme:

\begin{equation*}
 \sum_{ijk}\avg{{u}_{h}^{(n+1)}}_{ijk} = \sum_{ijk}\avg{{u}_{h,HO}^{(n+1)}}_{ijk} = \sum_{ijk}\avg{{u}_{h,LO}^{(n+1)}}_{ijk} = \sum_{ijk}\avg{{u}_{h}^{(n)}}_{ijk}
\end{equation*}

\noindent for periodic boundary conditions or compactly supported solutions.

\subsection{Inversion of diagonal blocks}

The global system to be solved at each time step reads

\begin{equation*}
 \mathbb{A}_{3d}{\bf U}^{(n+1)} = \mathbb{M}_{3d}{\bf U}^{(n)},
\end{equation*}

\noindent of size $N_xN_yN_zN_p$ with blocks of size $N_p = (p + 1)^3$. The diagonal blocks without graph viscosity read ${\bf L}_{3d}({\bf M}\otimes{\bf M}\otimes{\bf M})$ with

\begin{equation*}
 {\bf L}_{3d} = ({\bf R}\otimes{\bf R}\otimes{\bf R})\Psib_{3d}({\bf R}\otimes{\bf R}\otimes{\bf R})^{-1}, \quad \Psib_{3d} = \frac{\lambda_{x_i}}{\lambda}{\bf I}\otimes{\bf I}\otimes\Psib_\lambda + \frac{\lambda_{y_j}}{\lambda}{\bf I}\otimes\Psib_\lambda\otimes{\bf I} + \frac{\lambda_{z_k}}{\lambda}\Psib_\lambda\otimes{\bf I}\otimes{\bf I},
\end{equation*}

\noindent where $\lambda\coloneqq\lambda_{x_i}+\lambda_{y_j}+\lambda_{z_k}$, and where ${\bf R}$ and $\Psib_\lambda$ are defined in \cref{eq:L1D_from_psib}. Note that $\Psib_{3d}$ is a diagonal matrix, so the diagonal blocks are thus easily inverted:

\begin{equation*}
 \big({\bf L}_{3d}({\bf M}\otimes{\bf M}\otimes{\bf M})\big)^{-1} = ({\bf M}^{-1}{\bf R}\otimes{\bf M}^{-1}{\bf R}\otimes{\bf M}^{-1}{\bf R})\Psib_{3d}^{-1}({\bf R}^{-1}\otimes{\bf R}^{-1}\otimes{\bf R}^{-1}).
\end{equation*}

Including graph viscosity, the diagonal blocks are defined by ${\bf L}_{3d}^v({\bf M}\otimes{\bf M}\otimes{\bf M})$ with

\begin{equation*}
 {\bf L}_{3d}^v =  {\bf L}_{3d}^0 - {\bf U}_v{\bf V}_v^\top, \quad {\bf L}_{3d}^0 = ({\bf R}\otimes{\bf R}\otimes{\bf R})\big(\Psib_{3d}+2d_{ijk}\lambda{\bf I}\otimes{\bf I}\otimes{\bf I}\big)({\bf R}\otimes{\bf R}\otimes{\bf R})^{-1},
\end{equation*}

\noindent with

\begin{equation*}
 {\bf U}_v = 2d_{ijk}\big(\lambda_{x_i}{\bf I}\otimes{\bf I}\otimes\omegab,\lambda_{y_j}{\bf I}\otimes\omegab\otimes{\bf I},\lambda_{z_k}\omegab\otimes{\bf I}\otimes{\bf I}\big), \quad {\bf V}_v = \big({\bf I}\otimes{\bf I}\otimes{\bf 1},{\bf I}\otimes{\bf 1}\otimes{\bf I},{\bf 1}\otimes{\bf I}\otimes{\bf I}\big),
\end{equation*}

\noindent in $\mathbb{R}^{N_p\times3(p+1)^2}$. Once again, the diagonal blocks with graph viscosity may be inverted with \cref{algo:invert_L3d^v} using the Woodbury identity.

\begin{algorithm}
\caption{Algorithm flowchart for solving the system ${\bf L}_{3d}^v({\bf M}\otimes{\bf M}\otimes{\bf M}){\bf x}={\bf b}$ with graph viscosity.}\label{algo:invert_L3d^v}
\begin{algorithmic}[1]
\STATE{solve ${\bf L}_{3d}^0{\bf y}={\bf b}$ for ${\bf y}\in\mathbb{R}^{N_p}$:}

\begin{equation*}
 {\bf y} = ({\bf R}\otimes{\bf R}\otimes{\bf R}) \diag\left( \frac{1}{1+2\lambda d_{ijk}-2(\lambda_{x_i}\psi_l+\lambda_{y_j}\psi_m+\lambda_{z_k}\psi_r)}, \; 0\leq l,m,r\leq p \right)\left({\bf R}^{-1}\otimes{\bf R}^{-1}\otimes{\bf R}^{-1}\right){\bf b};
\end{equation*}

\STATE{solve ${\bf L}_{3d}^0{\bf Z}={\bf U}_v$ for ${\bf Z}\in\mathbb{R}^{N_p\times3(p+1)^2}$:}

\begin{multline*}
 {\bf Z} = 2d_{ijk}\left({\bf R}\otimes{\bf R}\otimes{\bf R}\right) \diag\left( \frac{1}{1+2\lambda d_{ijk}-2(\lambda_{x_i}\psi_l+\lambda_{y_j}\psi_m+\lambda_{z_k}\psi_r)}, \; 0\leq l,m,r\leq p \right) \cdots \\ \cdots \left(\lambda_{x_i}{\bf R}^{-1}\otimes{\bf R}^{-1}\otimes({\bf R}^{-1}\omegab),\lambda_{y_j}{\bf R}^{-1}\otimes({\bf R}^{-1}\omegab)\otimes{\bf R}^{-1},\lambda_{z_k}({\bf R}^{-1}\omegab)\otimes{\bf R}^{-1}\otimes{\bf R}^{-1}\right);
\end{multline*}

\STATE{solve $({\bf I}_{3(p+1)^2}-{\bf V}_v^\top{\bf Z}){\bf z}={\bf V}_v^\top{\bf y}$ for ${\bf z}\in\mathbb{R}^{3(p+1)^2}$;}

\STATE{set ${\bf x} = ({\bf M}^{-1}\otimes{\bf M}^{-1}\otimes{\bf M}^{-1})({\bf y} + {\bf Z}{\bf z})$.}
\end{algorithmic}
\end{algorithm}

%
%
\bibliographystyle{siamplain}
\bibliography{biblio_generale}

\end{document}